\theoremstyle{definition}
\newtheorem{thm}{Theorem}[section]
\newtheorem{lem}[thm]{Lemma}
\newtheorem{prop}[thm]{Proposition}
\newtheorem{ex}[thm]{Example}
\newtheorem*{defin}{Definition}
\newtheorem*{rem}{Remark}
\begin{document}

\title[Dual graphs and generating sequences]{Dual graphs and generating sequences of non-divisorial valuations on two-dimensional function fields}

\author{Charles Li}

\address{Department of Mathematics and Computer Sciences, Mercy College, 555 Broadway, Dobbs Ferry, NY 10522, USA}

\email{cli2@mercy.edu}

\keywords{valuation theory; valuations; dual graphs; generating sequences; Poincare series; Spivakovsky}

\date{}

\maketitle

\begin{abstract}
An exposition on Spivakovsky's dual graphs of valuations on function fields of dimension two is first given, leading to an alternative proof of minimal generating sequences for the non-divisorial valuations. It should be noted that the definition of generating sequence used in this paper is different from Spivakovsky's original usage. This change leads to an explicit formulation of generating sequence values for the non-divisorial cases in terms of data from their dual graphs. The proofs are elementary in the sense that only continued fractions and the linear Diophantine Frobenius problem from classical number theory are used.
\end{abstract}

\section{Introduction}

There are two main goals in this paper: 1) provide an exposition on Spivakovsky's dual graphs of valuations on function fields of surfaces with a focus on the non-divisorial valuations in particular, and 2) use the dual graphs to give a proof of minimal generating sequences for non-divisorial valuations.

It should be noted that the definition of generating sequence used in this paper is different from Spivakovsky's original usage. This change leads to an explicit formulation of generating sequence values for the non-divisorial cases in terms of data from their dual graphs. In this sense, this paper offers a variation on Spivakovsky's work.

One motivation for the study of dual graphs and generating sequences of valuations is the problem of resolution of singularities. Zariski used valuation theory to solve this classical problem in dimensions two and three in characteristic 0 by studying sets of valuations on a field $K$, centered on a subring $R$, i.e. a Zariski-Riemann space. The dual graph of a valuation is a nice way of visualizing a valuation and lends itself to the study of sets of valuations through sets of dual graphs. This approach is equivalent to the valuative tree in \cite{fj}.

Another way of studying valuations is through their Poincar\'e series. This paper sets up another paper (already published as \cite{lisch}) that explores the connections between the dual graphs, generating sequences and Poincar\'e series of non-divisorial valuations on function fields of dimension two.

The following setting will be used throughout. Let $(R,\mathfrak{m})$ be a two-dimensional regular local ring whose fraction field $K$ is a function field of dimension two (i.e. of transcendence degree two) over an algebraically closed base field $k$ of characteristic 0. Let $\nu$ be a valuation on $K$ that's centered on $R$, i.e. $\mathfrak{m}=\mathfrak{m}_{V}\cap R$, where $\mathfrak{m}_V$ is the maximal ideal of the valuation ring $V$, and $\text{Frac}(R)=\text{Frac}(V)=K$. Hence, $\nu$ is a map from $K$ to an ordered abelian group $\Gamma \cup \{\infty\}$  satisfying the axioms:
\begin{equation*}
\begin{array}{ll}
1) & \nu(xy)=\nu(x)+\nu(y) \\
2) & \nu(x+y)\geq \min\{\nu(x),\nu(y)\}\\
3) & \nu(x)=\infty \iff x=0
\end{array}
\end{equation*}
for all $x,y\in K$. As usual, $\gamma+\infty=\infty+\gamma=\infty$ for all $\gamma\in\Gamma$. Lastly, $\nu(c)=0$ for all $c\in k\setminus{0}$.

It is useful to first see an overview of this paper before diving into the details. The reader may wish to read this paragraph lightly at first and return to it later if it helps in better grasping the bigger picture. Briefly speaking, the valuations in the setting described above can be interpreted as encoding information about sequences of point blowups. Algebraically, a valuation $\nu$ determines a sequence of regular local ring extensions:
$$R_0\rightarrow R_1\rightarrow R_2 \rightarrow \cdots \rightarrow V$$
where $R_0:=R$. Let $\mathfrak{m}_i=(x_i,y_i)$ denote the maximal ideal of $R_i$, where the parameters $x_i$ and $y_i$ are obtained from the previous level via special rules determined by $\nu$. The intersections of the exceptional components of the exceptional set give rise to the dual graph of the underlying valuation when the graph theoretic dual of the reduced exceptional set is considered. These exceptional components are given in local equations by the regular parameters of $\{R_i\}$. Carefully tracing the values of the parameters along the sequence of blowups allows us to see how the exceptional components intersect, which results in the shape of the dual graph, and also results in the values of the elements of a generating sequence associated to the valuation $\nu$. An analysis of the parameter values will show that generating sequences of a certain form for non-divisorial valuations are minimal. Continued fractions and the Frobenius problem will be used in the details.

\section{dual graphs} \label{sectdg}

Dual graphs of valuations are a nice combinatorial way of visualizing valuations. They were introduced by Spivakovsky in \cite{spiv}. In this section, we will review blowups and dual graphs as well as establish notation. We differ from Spivakovsky's exposition by stressing the local perspective.

Recall, we have a point associated with the maximal ideal $\mathfrak{m}$. This point is blown up in a sequence of point blowups along the valuation $\nu$. Algebraically, there is a sequence of local ring extensions:
\begin{equation*}
\pi^{*}: R=R_0 \xrightarrow{\pi_1^*} R_1 \xrightarrow{\pi_2^*} \cdots \rightarrow R_i \xrightarrow{\pi_{i+1}^*} R_{i+1} \rightarrow \cdots \rightarrow V
\end{equation*}
where we get $V$ in the limit by a consequence of local uniformization in dimension two (Theorem \ref{lu}).
The points in this sequence are called {\it centers}. The center corresponding to the maximal ideal $\mathfrak{m}_i$ is denoted $\eta_i$. The parameters of the maximal ideal $(x_i,y_i)=\mathfrak{m}_i$ determine $(x_{i+1},y_{i+1})=\mathfrak{m}_{i+1}$ in one of three ways depending on $\nu(x_i)$ and $\nu(y_i)$:
\begin{equation*}
\begin{array}{llll}
1) & x_{i} = x_{i+1}, & y_{i} = x_{i+1}y_{i+1} & (x\text{-blowup})
\\\\
2) & x_{i} = x_{i+1}y_{i+1}, & y_{i}=y_{i+1} & (y\text{-blowup})
\\\\
3) & x_{i} = x_{i+1}, & y_{i} = x_{i+1}\left(y_{i+1}+c_i\right) & (z\text{-blowup})
\end{array}
\end{equation*}
where $c_i\neq 0$ is the residue of $y_{i}/x_{i}$ in $k$.

If $\nu(x_i)<\nu(y_i)$, then an $x$-blowup is used. If $\nu(x_i)>\nu(y_i)$, then a $y$-blowup is used. If $\nu(x_i)=\nu(y_i)$, then two cases arise depending on whether the next point blowup is the last in the sequence of blowups. If the sequence of blowups ends at level $i+1$, then $R_{i+1}=V$ is a discrete valuation ring and an $x$-blowup is used to determine the transformation of the parameters from level $i$ to level $i+1$. Furthermore $\mathfrak{m}_{i+1}=(x_{i+1})$ since $y_{i+1}$ will be a unit. Otherwise, if $\nu(x_i)=\nu(y_i)$ and the sequence of blowups doesn't terminate at level $i+1$, then $R_{i+1}$ is not a discrete valuation ring and a $z$-blowup is used to transform the parameters.

These $x$-blowups, $y$-blowups and $z$-blowups introduce new elements to $R_i$ to get a new intermediate ring, say, $R_{i+1}^{'}$. These blowups introduce $y_i/x_i$, $x_i/y_i$ and $y_i/x_i-c_i$, respectively. Now to obtain $R_{i+1}$, localize $R_{i+1}^{'}$ at $\mathfrak{m}_{i+1}=\mathfrak{m}_V \cap R_{i+1}^{'}$. Taking an $x$-blowup as an example, $R_{i+1}^{'} = R_i[y_i/x_i]$ and $R_{i+1}=\left(R_i[y_i/x_i]\right)_{\mathfrak{m}_{i+1}}$. Notice $\nu$ is centered on every $R_i$ and that $\text{Frac}(R_i)=K$ for $i\geq 0$.

Geometrically, we have a sequence of maps:
\begin{equation*}
\pi: \text{Spec} V \rightarrow \cdots \rightarrow \text{Spec} R_{i+1} \xrightarrow{\pi_{i+1}} \text{Spec} R_i \rightarrow \cdots \xrightarrow{\pi_2} \text{Spec} R_1 \xrightarrow{\pi_1} \text{Spec} R_0
\end{equation*}

The {\it exceptional set} is defined to be $\pi^{-1}(\eta_0)$, where the 0-th center $\eta_0$ is the point corresponding to $\mathfrak{m}$. An {\it exceptional component} $L_i$ of the exceptional set is defined to be $L_i=\pi_i^{-1}(\eta_{i-1})\in\text{Spec}R_i$. Points on $L_i$ are considered {\it infinitely near} the previous $\eta_j$, for $j<i$.

The intuition and terminology behind point blowups comes from classical algebraic geometry. The idea is to replace a point with a line that represents tangent directions at the point, and this line is considered to be the infinitely near neighborhood to the point. Blowing up the origin in the $xy$-plane with an $x$-blowup introduces $y_1=y/x$, which encodes information about slopes of tangent lines through the origin $\eta_0$. If we work with $k=\mathbb{C}$ and projectivize, then the projective line $\mathbb{P}^{1}(\mathbb{C})$ can be visualized as a sphere, hence the name ``blowup'' in the sense of inserting a straw and blowing up a bubble at the point.

It is useful to think about what happens in the real setting $\mathbb{R}^2$. When the origin of the $xy$-plane is blown up, the $xy$-plane is pulled and twisted in the third dimension. In the process, the exceptional component $L_1$ was formed, blown up from the origin. We can ``flatten'' this picture out and think of it as the $x_1y_1$-plane for intuition, where $L_1$ is the $y_1$-axis and has local equation $x_1=0$ (assuming an $x$-blowup was used), and where the $x_1$-axis is given by $y_1=0$. The points on the $y_1$-axis correspond to slopes of lines through the origin in the $xy$-plane. The $y$-axis in the $xy$-plane has slope $\infty$ and hence is represented by a point at infinity with respect to the $y_1$-axis. A second point blowup can be performed at a point on the $y_1$-axis, yielding a new $x_2y_2$-plane. Continuing in this manner, an infinite sequence of point blowups can be visualized.

If an $x$-blowup is used to go from level $i$ to level $i+1$, then the new exceptional component $L_{i+1}$ will be given in local equations by $x_{i+1}=0$, i.e. $L_{i+1}:x_{i+1}=0$. If a $y$-blowup is used, then we have $L_{i+1} : y_{i+1}=0$. If a $z$-blowup is used, then we also have $L_{i+1}:x_{i+1}=0$. Geometrically, the $z$-blowup is different from the $x$-blowup (or the $y$-blowup) in that the $z$-blowup sets up a point blowup occurring at a center $\eta_{i+1}$ on $L_{i+1}$ that's different from the ``origin'' of the $y_i/x_i$-axis (or $x_i/y_i$-axis, respectively).

The effect of blowups on curves is important and so we now set notation. Let $C$ be a curve in $\text{Spec}R_i$, hence $\eta_i \in C$. Let $j>i$. The {\it total transform} of $C$ after the $j$-th blowup is $(\pi_{i+1} \circ \cdots \circ \pi_{j})^{-1}(C)$ . The {\it strict transform} of $C$ after the $j$-th blowup, denoted $C^{(j)}$, is the Zariski closure of $(\pi_{i+1} \circ \cdots \circ \pi_{j})^{-1}(C\setminus \eta_i)$. The {\it exceptional transform} of $C$ after the $j$-th blowup is $(\pi_{i+1} \circ \cdots \circ \pi_{j})^{-1}(\eta_i)$, where the exceptional components are ``counted properly.''

From the algebraic perspective, if $C$ is given by $f=0$, then the total transform after the $j$-th blowup is: $\displaystyle (\pi_j^*\circ\cdots\circ\pi_{i+1}^*)(f)=x_j^{e_1}y_j^{e_2}f^{(j)}$, where $e_1,e_2\in\mathbb{N}_0$, and $f^{(j)}$ is not divisible by $x_j$ or by $y_j$. The total transform is made of the strict transform $f^{(j)}$ and the exceptional transform $x_j^{e_1}y_j^{e_2}$. For simplicity, we will just write: $f=x_j^{e_1}y_j^{e_2}f^{(j)}$. It is easy to see that $\nu(f^{(n)})<\nu(f^{(m)})$ for $m<n$ if $\nu(f^{(m)})>0$. Similarly, it is also easy to see that $\nu(f^{(n)})=\nu(f^{(m)})=0$ for $m<n$ if $\nu(f^{(m)})=0$. The values of the strict transforms are monotonically decreasing as successive blowups are performed since one of the parameters, say $x_i$ or $y_i$, is being factored out into the exceptional transform at each stage, until the strict transform becomes a unit.

We are now ready to tackle the dual graphs of valuations. The dual graph of a valuation is a beautiful combinatorial object that represents a valuation via intersections of exceptional components of the exceptional divisor. The valuation is thus described through its effects on a point that is birationally transformed by blowups. The concept has its origins in Zariski's Main Theorem: the exceptional set is connected. The dual graph is the graph theoretic dual of the reduced exceptional set, inverting the exceptional components (lines) and intersections (points), to get vertices (exceptional components) and edges (intersection points), respectively. Dual graphs of valuations will be simple connected graphs.

Dual graphs are easiest to understand through examples. A concrete simple example of the dual graph of a divisorial valuation will hopefully add some clarity to the general process.

\begin{ex} \label{dgex} Consider a divisorial valuation $\nu$ such that $\nu(x)=1$, $\nu(y)=7/2$ and $\nu(y^2-x^{7})=43/6$. Note that $\displaystyle \frac{43}{6} = 7 + \frac{1}{6}$. Following the rules specified above on when to use $x$-blowups, $y$-blowups and $z$-blowups, we have the following data for a sequence of transformations:

\begin{equation*}
\begin{array}{|c|c|c|c|}
\hline
\nu(x_i) & \nu(y_i) & x \text{ transformation} & y \text{ transformation} \\ \hline
\nu(x_0)=1 & \nu(y_0)=7/2 & x_0=x_1 & y_0=x_1y_1 \\ \hline
\nu(x_1)=1 & \nu(y_1)=5/2 & x_1=x_2 & y_1=x_2y_2 \\ \hline
\nu(x_2)=1 & \nu(y_2)=3/2 & x_2=x_3 & y_2=x_3y_3 \\ \hline
\nu(x_3)=1 & \nu(y_3)=1/2 & x_3=x_4y_4 & y_3=y_4 \\ \hline
\nu(x_4)=1/2 & \nu(y_4)=1/2 & x_4=x_5 & y_4=x_5(y_5+1) \\ \hline
\nu(x_5)=1/2 & \nu(y_5)=1/6 & x_5=x_6y_6 & y_5=y_6 \\ \hline
\nu(x_6)=1/3 & \nu(y_6)=1/6 & x_6=x_7y_7 & y_6=y_7 \\ \hline
\nu(x_7)=1/6 & \nu(y_7)=1/6 & x_7=x_8 & y_7=x_8y_8 \\ \hline
\end{array}
\end{equation*}
\end{ex}

We stop after the 8th blowup and $R_8$ is a discrete valuation ring with uniformizing parameter $x_8$. Here $x_8=0$ gives the local equation for the exceptional component $L_8$, and $y_8$ is a unit. By convention, the last transformation was arbitrarily chosen to be an $x$-blowup instead of a $y$-blowup. In this example, $\nu$ counts the order of vanishing along $L_8$ multiplied with a normalization factor $1/b$, for some $b\in\mathbb{N}$, to account for normalizing the valuation such that $\nu(x)=1$. More precisely, for $f\in R$, we have $\displaystyle \nu(f)=\frac{1}{b}\text{ord}_{x_8}\left(\pi^{*}(f)\right)$, and in this example $b=6$. Notice the sequence of transformations is: 3 $x$-blowups, 1 $y$-blowup, 1 $z$-blowup, 2 $y$-blowups, and lastly 1 $x$-blowup.

The dual graph for this example will be built in stages. Exceptional components will be represented by vertices in the dual graph. The intersection between an exceptional component and a previous exceptional component -- or its strict transform -- will be represented by an edge connecting the two vertices corresponding to the exceptional components. Notice that once a strict transform of an exceptional component is a unit in some $R_i$, we no longer need to consider it for all future blowups since it will stay a unit. Geometrically, this corresponds to the strict transform being away from some center $\eta_i$, hence the strict transform will {\it not} be infinitely near the future centers $\eta_j$ in the sequence of blowups, where $j>i$.

After the first $x$-blowup, we get the exceptional component $L_1: x_1=0$. This is represented by a vertex, labeled vertex 1. After the second $x$-blowup, we get the exceptional component $L_2: x_2=0$. Now, $L_2$ intersects $L_1$ since they share the center $\eta_1$ in common. We now have two vertices joined by an edge in building the dual graph of $\nu$; vertex 1 is adjacent to vertex 2. See Figure \ref{ex1} which shows the first four steps in the process to build the dual graph.

\begin{figure}
\centering
\includegraphics[width=125mm, bb=0 0 628 78]{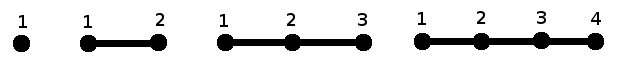}
\caption{The first four stages in building the dual graph}
\label{ex1}
\end{figure}

Notice $L_2:x_2=0$ and the $x$ transformation of the second $x$-blowup is $x_1=x_2*1$, so the strict transform $L_1^{(2)}$ is given by $1=0$, i.e. the strict transform is empty geometrically. By slight abuse of notation, we say $L_1^{(2)}$ is a unit when its local equation is given by a unit. We won't have to consider $L_1^{(i)}$ for $i\geq2$. Similarly, $L_3: x_3=0$ intersects $L_2$ at $\eta_2$, so vertex 3 is adjacent to vertex 2, but not adjacent to vertex 1 since $L_1^{(i)}$ is a unit in $R_i$ for $i\geq2$. The third $x$-blowup gives $x_2=x_3$ and the strict transforms $L_2^{(i)}$ are units so they can be ignored for $i\geq3$. The fourth blowup is a $y$-blowup which gives $L_4: y_4=0$. $L_4$ intersects $L_3$ at $\eta_3$. Vertex 4 is only adjacent to vertex 3 since $L_1^{(4)}$ and $L_2^{(4)}$ are units. The strict transform $L_3^{(4)}:x_4=0$ is not a unit. Both $L_4$ and $L_3^{(4)}$ are positively valued: $\nu(y_4)>0$ and $\nu(x_4)>0$. Thus, their intersection point is the center $\eta_4$ of the next blowup.

\begin{figure}
\centering
\includegraphics[width=95mm, bb=0 0 386 97]{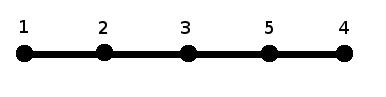}
\caption{After the fifth blowup}
\label{ex2}
\end{figure}

The exceptional component $L_5: x_5=0$ is represented by vertex 5 which is adjacent to both vertices 3 and 4. See Figure \ref{ex2}. Now $y_5=y_4/x_4-1$ and notice $\nu(y_4/x_4-1)>0$. Here we used 1 for the residue of $y_4/x_4$ in $R_5/\mathfrak{m}_5 \cong k$ for simplicity and this choice would not affect the resulting dual graph. Notice $L_3^{(5)}$ is a unit since $x_4=x_5$, and $L_4^{(5)}: y_5+1=0$ is also a unit. Thus, $L_3^{(i)}$ and $L_4^{(i)}$ will be ignored from now on. The center $\eta_5$ is determined by $(x_5,y_5)$. We have a $y$-blowup at $\eta_5$ since $\nu(y_5) < \nu(x_5)$. Now $L_6: y_6=0$ intersects $L_5$ at $\eta_5$ and so we have Figure \ref{ex3}.

\begin{figure}
\centering
\includegraphics[width=90mm, bb=0 0 385 125]{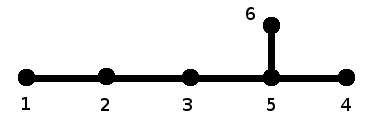}
\caption{After the sixth blowup}
\label{ex3}
\end{figure}

We wish to standardize the appearance of dual graphs, so let us adopt the convention that the graphs will open to the right and downward. As such, we shall rotate the rightmost portion of the dual graph when a node such as vertex 5 is introduced. We get Figure \ref{ex4}.

\begin{figure}
\centering
\includegraphics[width=90mm, bb=0 0 385 124]{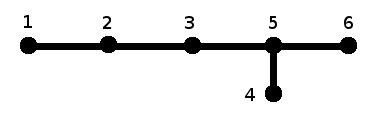}
\caption{After the sixth blowup, again}
\label{ex4}
\end{figure}

Continuing, $L_5^{(6)}: x_6=0$, which is not a unit. The center $\eta_6$ is the intersection of $L_6$ and $L_5^{(6)}$ so vertex 7 (corresponding to $L_7: y_7=0$) will be adjacent to both vertices 5 and 6. We get Figure \ref{ex5}.

\begin{figure}
\centering
\includegraphics[width=95mm, bb=0 0 434 109]{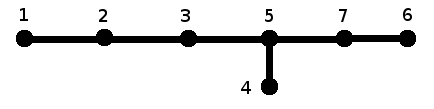}
\caption{After the seventh blowup}
\label{ex5}
\end{figure}

The strict transform $L_5^{(7)}:x_7=0$ is not a unit, while $L_6^{(7)}$ is a unit since $y_6=y_7$. Thus, vertex 8 will be adjacent to vertex 5. Vertex 8 will also be adjacent to vertex 7 since $L_8$ intersects $L_7$ at $\eta_7$. Notice that $\nu(x_7)=\nu(y_7)=1/6$ so this will be the last blowup before we reach the exceptional component $L_8 : x_8=0$ that determines the divisorial valuation. We distinguish the last vertex 8 by using an open dot. See Figure \ref{ex6}.

\begin{figure}
\centering
\includegraphics[width=110mm, bb=0 0 480 121]{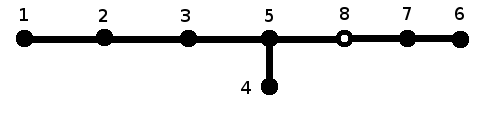}
\caption{After the eighth blowup}
\label{ex6}
\end{figure}

Now, to standardize the dual graph, we rotate the portion of the graph to the right of vertex 8 to get Figure \ref{ex7}, which is what we will call the dual graph of the valuation $\nu$ in the example.

\begin{figure}
\centering
\includegraphics[width=120mm, bb=0 0 630 347]{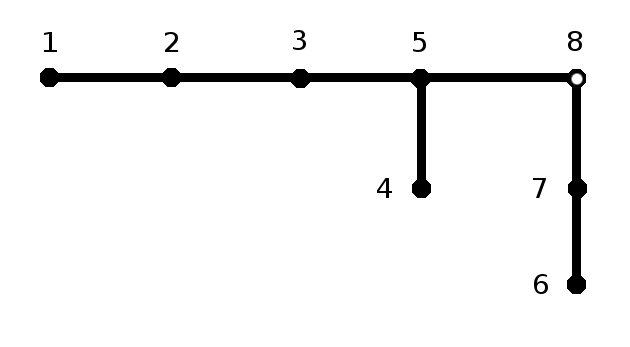}
\caption{The dual graph for the example}
\label{ex7}
\end{figure}

In general, if we rotate the graphs along the way as we did in the example so that the dual graph spreads to the right and downwards, then the dual graph naturally breaks up into ``L-shaped'' {\it dual graph pieces}, denoted $G_i$. Consider one such typical dual graph piece as in Figure \ref{Gi}. The figure shows $G_1$, the first piece of a dual graph $G=\bigcup G_i$.

\begin{figure}
\centering
\includegraphics[width=135mm, bb=0 0 630 380]{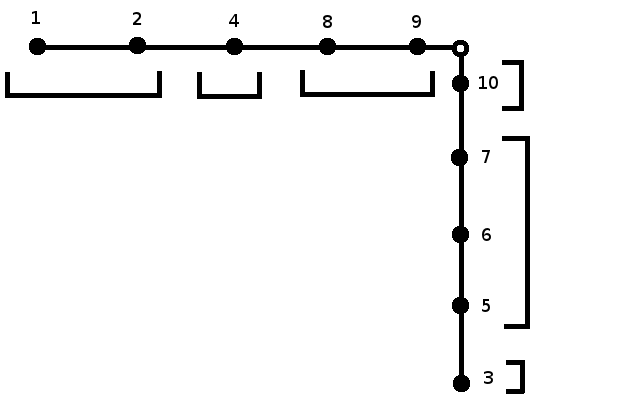}
\caption{Dual graph piece $G_1$}
\label{Gi}
\end{figure}

We call the horizontal portion the {\it odd leg} of $G_i$ and we call the vertical portion the {\it even leg}. There are $m_i$ segments of consecutively numbered vertices in $G_i$. In Figure \ref{Gi}, $m_1=6$. Let $a_j^{(i)}$ denote the number of vertices in the $j$-th segment of $G_i$. In Figure \ref{Gi}, $a_1^{(1)}=2$, $a_2^{(1)}=1$, $a_3^{(1)}=1$, $a_4^{(1)}=3$, $a_5^{(1)}=2$ and $a_6^{(1)}=1$. For computations, we exclude the last vertex denoted by the open dot. That particular vertex belongs to the next dual graph piece, $G_2$ in this case.

The very last dual graph piece could be of the form depicted in Figure \ref{Gi} or it could be of the form depicted in Figure {\ref{Ggplus1}, with only an odd leg. Let $g$ be the number of dual graph pieces with both odd and even legs (Figure \ref{Gi}). If the last dual graph piece has only an odd leg (Figure \ref{Ggplus1}), then the last piece is the $(g+1)$-th piece $G_{g+1}$. In $G_{g+1}$, which we also call the {\it tail (dual graph) piece}, the $a_1$ counts the number of vertices minus 1 to denote exclusion of the open dot. In Figure \ref{Ggplus1}, $a_1=6$. If $G_{g+1}$ would only consist of 1 open vertex as in Figure \ref{ex7}, then we say that $a_1=0$ for $G_{g+1}$ and hence $G_g$ is the last piece of that particular dual graph $G$.

\begin{figure}
\centering
\includegraphics[width=135mm, bb=0 0 638 100]{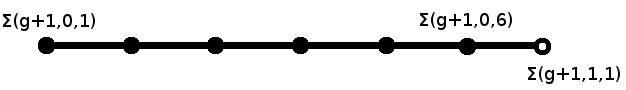}
\caption{Dual graph tail piece $G_{g+1}$}
\label{Ggplus1}
\end{figure}

\begin{defin}
The {\it defining set of data} of a dual graph $G$ is the set of non-negative integers: $g$, $\displaystyle \left\{m_i\right\}_{i=1}^{g+1}$, and $\displaystyle \{a_j^{(i)}\}_{j=1}^{m_i}$. If there is no tail $G_{g+1}$, then we set $m_{g+1}=0$ and $a_1^{(g+1)}=0$.
\end{defin}

\begin{defin}
A {\it modification of the first kind} is the adjoining of a new vertex in the construction of the dual graph which is adjacent to just one older vertex. A {\it modification of the second kind} is the adjoining of a new vertex which is adjacent to two older vertices. In Example \ref{dgex}, adjoining vertex 3 is a modification of the first kind, while adjoining vertex 7 is a modification of the second kind. By convention, the introduction of the first vertex in the first stage of building a dual graph is also considered a modification of the first kind.
\end{defin}

\begin{rem}
In the literature, Favre and Jonsson's definition of {\it free} and {\it satellite} blowups in \cite{fj} is similar to Spivakovsky's modifications of the first and second kind, respectively.
\end{rem}

The dual graphs of divisorial valuations in general are depicted in Figures \ref{div1} and \ref{div2}. Most of the vertices are suppressed for clarity. The sigma label notation will be explained later.

\begin{figure}
\centering
\includegraphics[width=135mm, bb=0 0 638 231]{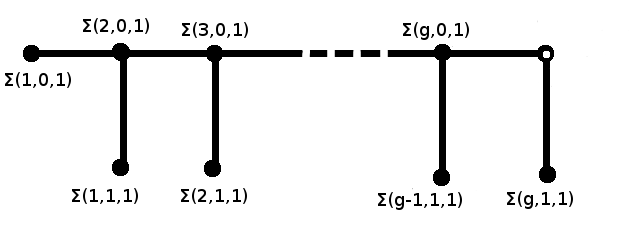}
\caption{Type 0, divisorial valuation, $a_1^{(g+1)}=0$}
\label{div1}
\end{figure}

\begin{figure}
\centering
\includegraphics[width=135mm, bb=0 0 638 231]{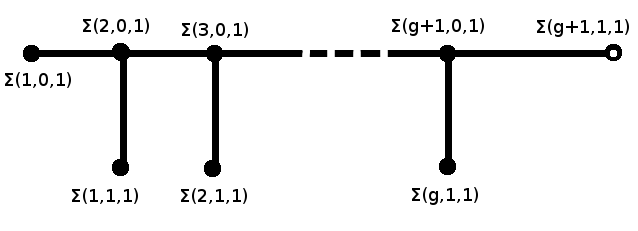}
\caption{Type 0, divisorial valuation, $a_1^{(g+1)}\neq0$}
\label{div2}
\end{figure}

The discussion above is summarized in the following

\begin{defin}
A {\it dual graph} $G$ is a simple connected graph made of {\it dual graph pieces} $G_i$ of the forms depicted in Figures \ref{Gi} and \ref{Ggplus1}, where the vertices are generated by modifications of the first and second kind. The vertices are labeled by $n\in\mathbb N$ and the $n$-th vertex represents the irreducible exceptional component after the $n$-th blowup. Adjacency in the graph represents intersections of exceptional components and the strict transforms of exceptional components. We write $\displaystyle G=\bigcup_i G_i$. If the number of dual graph pieces is finite, then there are $g+1$ pieces if the graph ends with the tail $G_{g+1}$ in Figure \ref{Ggplus1}, else there are $g$ pieces if the graph doesn't end with the tail. In each $G_i$ the horizontal portion is called the {\it odd leg}, and the vertical portion is called the {\it even leg}. The vertices in $G_i$ can be grouped together into $m_i$ segments with consecutively labeled vertices in each segment. We say there are $a_j^{(i)}$ vertices in the $j$-th segment of the $i$-th dual graph piece. Note that the last vertex denoted by the open dot is excluded from the $\displaystyle a_{m_i}^{(i)}$ count in each $G_i$.
\end{defin}

\begin{rem}
Instead of speaking of the $n$-th blowup and labeling the vertices accordingly, Spivakovsky uses different notation and assigns weights to the vertices in the dual graph depending on which type of modification was performed at each step. In addition, Spivakovsky also uses sigma notation to describe the vertices, which we will also adopt.
\end{rem}

\begin{defin} {\it Sigma notation} gives a way of referring to various vertices. Let $\Sigma(i,m,a)$ be the label of the $a$-th vertex in the $(m+1)$-th segment of the $i$-th dual graph piece. We have the following formula:
\begin{equation} \label{sigmasum}
\Sigma(i,m,a) = \sum_{k=1}^{i-1}\sum_{j=1}^{m_k} a_j^{(k)}+\sum_{l=1}^m a_l^{(i)} + a
\end{equation}
where:
\begin{equation*}
\left\{
\begin{array}{l}
1\leq i \leq g+1 \\
0\leq m \leq m_i-1 \\
1\leq a \leq a_{m+1}^{(i)}
\end{array}
\right.
\end{equation*}
and it is understood that $a_1^{(g+1)}$ could be 0.
Notice that the set of all $\Sigma(i,m,a)$ exhausts the labels in the dual graph of a divisorial valuation except for the very last vertex denoted with the open dot.
\end{defin}

\begin{rem}
We will be primarily interested in $\Sigma(i,0,1)$ as well as its predecessor vertex \mbox{$\displaystyle \Sigma\left(i-1,m_{i-1}-1,a_{m_{i-1}}^{(i-1)}\right)$}. The latter is quite cumbersome to write, so the alternative notation $\Sigma(i,0,0)$ will be used to reference it, even though this doesn't follow the rules set in the definition above. Suggestively, $\Sigma(i,0,1) = \Sigma(i,0,0) +1$.
\end{rem}

\begin{ex}
For the dual graph from the opening example of this section, we have: $g=2$, $m_1=2$, $a_1^{(1)}=3$, $a_2^{(1)}=1$, $m_2=2$, $a_1^{(2)}=1$, $a_2^{(2)}=2$, $m_3=0$ and $a_1^{(3)}=0$. The dual graph is shown in Figure \ref{ex8} with sigma notation. Note that $\Sigma(2,0,0)=\Sigma(1,1,1)$ here.
\end{ex}

\begin{figure}
\centering
\includegraphics[width=135mm, bb=0 0 734 378]{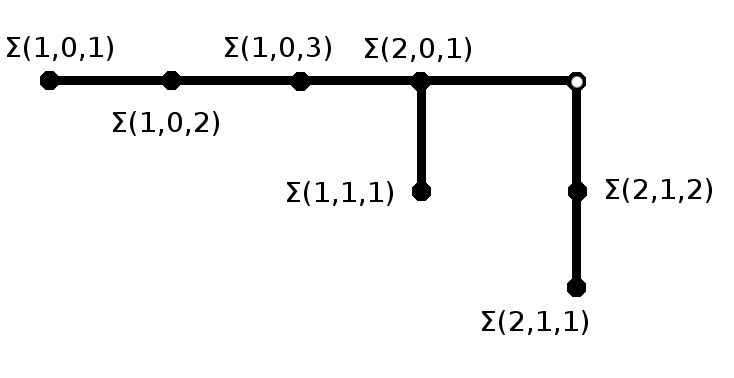}
\caption{The dual graph for the example in sigma notation}
\label{ex8}
\end{figure}

\begin{rem}
The node vertices are of the form $\Sigma(i,0,1)$. The vertex right before $\Sigma(i,0,1)$ is $\Sigma(i,0,0)$. The bottom-most vertex in the even leg of $G_i$ is $\Sigma(i,1,1)$. The right-most vertex in $G_{g+1}$ would not get a label that fits the summation formula (\ref{sigmasum}), but will be labeled $\Sigma(g+1,1,1)$ as a convention to follow the pattern for $\Sigma(i,1,1)$.
\end{rem}

\begin{rem}
The dual graph keeps track of how many of each type of blowup occurred. Notice that $a_1^{(i)}$ counts a $z$-blowup (to get from $\Sigma(i,0,0)$ to $\Sigma(i,0,1)$) followed by a number of consecutive $x$-blowups, where $i\geq2$. All the other $\displaystyle a_{\text{odd}}^{(i)}$ count a number of consecutive $x$-blowups. All $\displaystyle a_{\text{even}}^{(i)}$ count a number of consecutive $y$-blowups. This applies to non-divisorial valuations as well, but some slight changes need to be made.
\end{rem}

In the non-divisorial cases, the number of vertices is infinite. Dual graphs are obtained via modifications of the first and second kind only, so combinatorially we have the following possibilities for dual graphs: Figures \ref{infsingular} to \ref{type4}. Most of the vertices are suppressed in the figures for clarity. The very last vertex denoted by the open dot may not actually be a blowup in the sequence of blowups, but is sometimes inserted into the dual graph for intuition (i.e. in the Type 2 case).

Before continuing, it is useful to establish some additional terminology so we can more easily refer to the various non-divisorial cases. Valuations were classically studied according to the invariants: rank, rational rank $rr$ and dimension $d$ (transcendence degree of the residue field over the base field). In fact, an analysis of Abhyankar's inequality leads to the following classification of valuations in our setting where $(R,\mathfrak{m})$ is a two-dimensional regular local ring, etc. Here the valuation $\nu$ is one of the following cases:
\begin{equation*}
\begin{array}{|c|c|c|c|c|c|} \hline
\text{Rank} & \text{rr} & d & \text{Discreteness} & \text{Value group} & \text{Type} \\ \hline
1 & 1 & 1 & \text{discrete} & \mathbb{Z} & 0 \\ \hline
1 & 1 & 0 & \text{non-discrete} & \text{additive subgroup of}\left.\right. \mathbb{Q}& 1 \\ \hline
1 & 2 & 0 & \text{non-discrete} & \mathbb{Z} + \mathbb{Z} \tau, \left.\right.\text{where} \left. \tau \right. \text{is irrational} & 2 \\ \hline
2 & 2 & 0 & \text{discrete} & \mathbb{Z}^{2} & 3 \left.\right.\text{and} \left.\right. 4.2\\ \hline
1 & 1 & 0 & \text{discrete} & \mathbb{Z} & 4.1 \\ \hline
\end{array}
\end{equation*}
where discreteness refers to the discrete or non-discrete nature of the value groups, and where the value groups are given up to order isomorphism, i.e. an isomorphism that preserves the order.

\begin{rem}
Note the ``Type'' column. The types originate from Spivakovsky's work classifying valuations according to their dual graphs. One difference in notation: we denote Types 4.1 and 4.2 to reflect the rank of the valuation. These two types were originally switched in \cite{spiv}.
\end{rem}

Favre and Jonsson have studied valuations centered on the local ring of formal power series in two complex variables. For geometric intuition, they also considered the interpretations of these valuations when the power series converge at the origin in $\mathbb{C}^2$. This generalizes to smooth points on algebraic surfaces over algebraically closed fields. The following table gives descriptive labels to the types and we will adopt this language in the sequel:
\begin{equation*}
\begin{array}{|c|l|} \hline
\text{Type} & \text{Description} \\ \hline
0 & \text{Divisorial valuation} \\ \hline
1 & \text{Infinitely singular valuation} \\ \hline
2 & \text{Irrational valuation} \\ \hline
3 & \text{Exceptional curve valuation} \\ \hline
4 & \text{Curve valuation} \\ \hline
\end{array}
\end{equation*}

The reader is referred to \cite{fj} for more details. Note that in our usage Type 4 curve valuations fall into two subtypes, Types 4.1 and 4.2.

Now we return to the dual graphs of non-divisorial valuations.

Type 1 infinitely singular valuations are described by: $g=\infty$, $m_i<\infty$ for all $i$. There are infinitely many dual graph pieces $G_i$. See Figure \ref{infsingular}.

\begin{figure}
\centering
\includegraphics[width=135mm, bb=0 0 638 231]{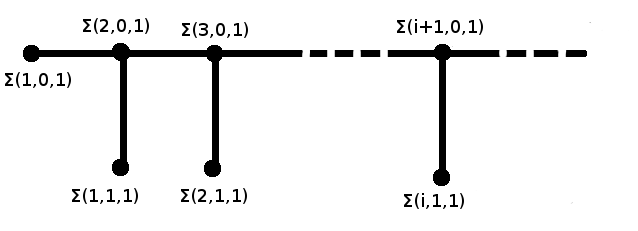}
\caption{Type 1, infinitely singular valuation}
\label{infsingular}
\end{figure}

Type 2 irrational valuations are described by: $g<\infty$, $m_g=\infty$. There are finitely many dual graph pieces, but in $G_g$ the vertices in the infinitely many segments approach the open dot from two sides, never reaching the open dot since it does not correspond to a blowup. See Figure \ref{irrational}. The open dot in the Type 2 case is the limit of where the vertices are heading, so to speak.

\begin{figure}
\centering
\includegraphics[width=135mm, bb=0 0 638 231]{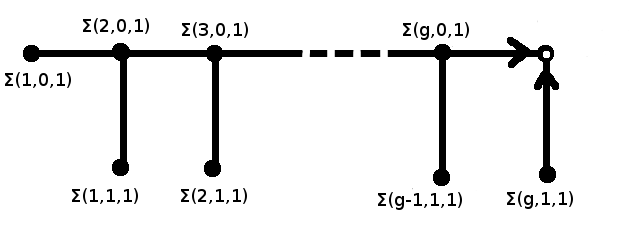}
\caption{Type 2, irrational valuation}
\label{irrational}
\end{figure}

Type 3 exceptional curve valuations are described by: $g<\infty$, $m_g<\infty$, $a_{m_g}=\infty$. There are two subcases, depending on whether $m_g$ is odd or even. In $G_g$, the vertices converge to the open dot from one side only. See Figures \ref{type3odd} and \ref{type3even}. The open dot is an exceptional component in the sequence of blowups.

\begin{figure}
\centering
\includegraphics[width=135mm, bb=0 0 638 231]{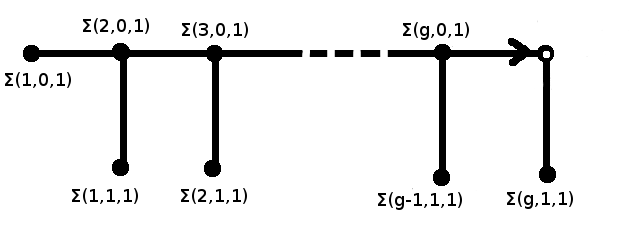}
\caption{Type 3, exceptional curve valuation (odd)}
\label{type3odd}
\end{figure}

\begin{figure}
\centering
\includegraphics[width=135mm, bb=0 0 638 231]{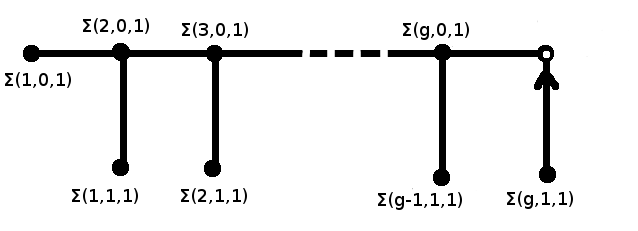}
\caption{Type 3, exceptional curve valuation (even)}
\label{type3even}
\end{figure}

Type 4 curve valuations are described by: $a_1^{(g+1)}=\infty$. The tail $G_{g+1}$ has infinitely many vertices. See Figure \ref{type4}. There are two subcases, Types 4.1 and 4.2, depending on how the blowups in $G_{g+1}$ are interpreted. In Type 4.2, $a_1^{(g+1)}$ encodes 1 $z$-blowup, followed by an infinite number of $x$-blowups. Type 4.1 encodes a mixture of an infinite number of both $x$-blowups and $z$-blowups. This will be discussed further in Section \ref{sectgs}.

\begin{figure}
\centering
\includegraphics[width=135mm, bb=0 0 638 231]{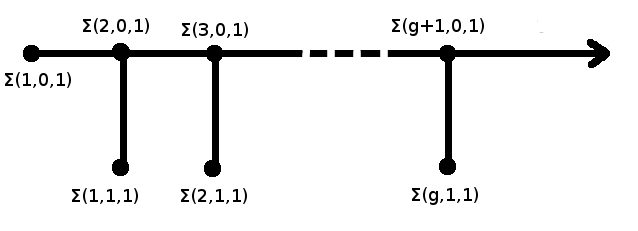}
\caption{Type 4, curve valuation}
\label{type4}
\end{figure}

A given dual graph with its defining set of data is associated with three sets of related continued fractions: $\displaystyle \{\tilde \beta_i\}$, $\displaystyle \{\beta_i'\}$ and $\displaystyle \{\beta_i\}$. These continued fractions will play a vital role in constructing generating sequences from given dual graphs. All the facts about continued fractions that we will use can be found in standard references such as \cite{khin} or \cite{olds}, and the proofs are omitted.

\begin{defin}By a {\it finite continued fraction}, we mean a number of the form:
\begin{equation*}
a_1 + \cfrac{1}{
a_2 + \cfrac{1}{
a_3 + \cfrac{1}{
\ddots + \cfrac{1}{
a_n
}}}}
\end{equation*}
where $a_1\in\mathbb{Z}$, and $a_i\in\mathbb{N} \text{ for } i\geq 2$. A compact notation for this continued fraction is: $\displaystyle [a_1, \ldots, a_n]$. Note that we use $\mathbb{N}$ to denote the positive integers while we use $\mathbb{N}_0$ to denote the non-negative integers.

Similarly, we have {\it infinite continued fractions}: $\displaystyle [a_1, a_2, \ldots]$. Finite continued fractions are rational numbers and infinite continued fractions are irrational numbers.
\end{defin}

\begin{defin}
For a given continued fraction, $\displaystyle [a_1, \ldots, a_n]$ or $\displaystyle [a_1, a_2, \ldots]$, the {\it i-th convergent} is defined to be the fraction $\displaystyle [a_1, \ldots, a_i]$. Let us denote the $i$-th convergent by $\displaystyle \lambda_i/\mu_i$, where $\lambda_i$ and $\mu_i$ are relatively prime.
\end{defin}

\begin{prop} \label{conrec}
Let $\displaystyle \lambda_{-1}=0$, $\displaystyle \lambda_0=1$, $\displaystyle \mu_{-1}=1$ and $\displaystyle \mu_0=0$ by convention. For $i\geq 1$, we have the basic recursive formulas:
\begin{equation*}
\lambda_i=a_{i}\lambda_{i-1}+\lambda_{i-2}
\end{equation*}
\begin{equation*}
\mu_i=a_{i}\mu_{i-1}+\mu_{i-2}
\end{equation*}
\end{prop}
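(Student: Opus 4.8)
The plan is to prove both recursions simultaneously by induction, using the familiar device of allowing a real parameter in the last entry of the continued fraction. First I would record the trivial base case: the first convergent is $[a_1] = a_1/1$, so $\lambda_1 = a_1 = a_1\lambda_0 + \lambda_{-1}$ and $\mu_1 = 1 = a_1\mu_0 + \mu_{-1}$, and $\gcd(a_1,1)=1$. Next, with $p_{-1}=0$, $p_0=1$, $q_{-1}=1$, $q_0=0$, define $p_i,q_i$ for $i\geq 1$ by the two recursions in the statement (so that $p_i,q_i$ depend only on $a_1,\ldots,a_i$), and prove by induction on $n\geq 1$ the more flexible identity
\[
[a_1,\ldots,a_{n-1},t] = \frac{t\,p_{n-1}+p_{n-2}}{t\,q_{n-1}+q_{n-2}} \qquad \text{for all real } t>0 ,
\]
which for $n=1$ just reads $[t]=t=(t\cdot 1+0)/(t\cdot 0+1)$.

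For the inductive step $n\geq 2$, the key observation is the elementary identity
\[
[a_1,\ldots,a_{n-1},t] = \bigl[a_1,\ldots,a_{n-2},\ a_{n-1}+\tfrac{1}{t}\bigr] ,
\]
valid because only the innermost entry changes. Putting $s=a_{n-1}+1/t>0$ and applying the induction hypothesis at length $n-1$ to $[a_1,\ldots,a_{n-2},s]$ gives
\[
[a_1,\ldots,a_{n-1},t] = \frac{s\,p_{n-2}+p_{n-3}}{s\,q_{n-2}+q_{n-3}} = \frac{p_{n-1}+p_{n-2}/t}{q_{n-1}+q_{n-2}/t} = \frac{t\,p_{n-1}+p_{n-2}}{t\,q_{n-1}+q_{n-2}} ,
\]
where the middle equality uses $a_{n-1}p_{n-2}+p_{n-3}=p_{n-1}$ and $a_{n-1}q_{n-2}+q_{n-3}=q_{n-1}$. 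Specializing $t=a_n$ (legitimate since $a_n\geq 1$ for $n\geq 2$) yields $[a_1,\ldots,a_n] = (a_n p_{n-1}+p_{n-2})/(a_n q_{n-1}+q_{n-2}) = p_n/q_n$.

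The remaining point — and really the only one that needs care — is that $p_n/q_n$ is \emph{already in lowest terms}, so that it coincides with the convergent $\lambda_n/\mu_n$ as defined (with $\lambda_n,\mu_n$ coprime), whence $\lambda_n=p_n$ and $\mu_n=q_n$. I would handle this by carrying the determinant identity $p_i q_{i-1} - p_{i-1} q_i = (-1)^i$ along in the same induction: the recursions give at once $p_i q_{i-1} - p_{i-1} q_i = -(p_{i-1} q_{i-2} - p_{i-2} q_{i-1})$, and the value at $i=0$ is $p_0 q_{-1} - p_{-1} q_0 = 1$. Since this exhibits an integer combination of $p_n$ and $q_n$ equal to $\pm 1$, we get $\gcd(p_n,q_n)=1$, which completes the proof. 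Two minor remarks I would slip in along the way: $q_i>0$ for all $i\geq 1$ (immediate by induction from $a_i\geq 1$, $i\geq 2$), so that every convergent is a genuine fraction; and it is precisely the flexibility of a real last entry that makes it legal to feed the non-integer value $a_{n-1}+1/t$ back into the inductive hypothesis.
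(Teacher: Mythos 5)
Your proof is correct and complete: the induction on the ``flexible'' identity $[a_1,\ldots,a_{n-1},t]=(t\,p_{n-1}+p_{n-2})/(t\,q_{n-1}+q_{n-2})$, combined with carrying the determinant identity along to get $\gcd(p_n,q_n)=1$ (and the positivity of $q_n$), is exactly what is needed to identify $p_n/q_n$ with the convergent $\lambda_n/\mu_n$ as the paper defines it, in lowest terms. The paper gives no proof of this proposition --- it explicitly defers all continued-fraction facts to standard references such as Khinchin or Olds --- and your argument is precisely the classical one found there, so there is nothing to compare beyond noting that you have correctly supplied the omitted details, including the coprimality step that is easy to overlook.
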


\begin{rem}
There exists a wonderful table method of using these recursive formulas to quickly compute the convergents $\lambda_i/\mu_i$. See pp.24-25 of \cite{olds}.
\end{rem}

\begin{prop} \label{condet} (Determinant Formula)

Given a continued fraction $\displaystyle [a_1, \ldots, a_n]$, we have:
\begin{equation*}
\lambda_n\mu_{n-1}-\lambda_{n-1}\mu_n = (-1)^{n}
\end{equation*}
\end{prop}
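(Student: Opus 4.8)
The plan is a straightforward induction on $n$, driven entirely by the recursive formulas of Proposition~\ref{conrec} together with the sign-flipping structure of the identity. First I would fix the base case. Using the conventions $\lambda_{-1}=0$, $\lambda_0=1$, $\mu_{-1}=1$, $\mu_0=0$, the recursion gives $\lambda_1 = a_1\lambda_0+\lambda_{-1}=a_1$ and $\mu_1=a_1\mu_0+\mu_{-1}=1$, so
\begin{equation*}
\lambda_1\mu_0-\lambda_0\mu_1 = a_1\cdot 0 - 1\cdot 1 = -1 = (-1)^1,
\end{equation*}
which settles $n=1$. (One could equally well start from $n=0$, where $\lambda_0\mu_{-1}-\lambda_{-1}\mu_0 = 1 = (-1)^0$; I would mention this only to remark that the conventional index values are chosen precisely to make the formula hold there too.)

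For the inductive step, assume $\lambda_{n-1}\mu_{n-2}-\lambda_{n-2}\mu_{n-1}=(-1)^{n-1}$. Substituting $\lambda_n=a_n\lambda_{n-1}+\lambda_{n-2}$ and $\mu_n=a_n\mu_{n-1}+\mu_{n-2}$ and expanding, the two terms containing the factor $a_n\lambda_{n-1}\mu_{n-1}$ cancel, leaving
\begin{equation*}
\lambda_n\mu_{n-1}-\lambda_{n-1}\mu_n = \lambda_{n-2}\mu_{n-1}-\lambda_{n-1}\mu_{n-2} = -\bigl(\lambda_{n-1}\mu_{n-2}-\lambda_{n-2}\mu_{n-1}\bigr) = -(-1)^{n-1} = (-1)^n,
\end{equation*}
completing the induction.

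I do not anticipate any real obstacle here; the only point requiring a little care is making sure the recursion is valid at $i=1$ (which it is, precisely because of the stated conventions for $\lambda_{-1},\lambda_0,\mu_{-1},\mu_0$), so that the base case and the inductive step mesh without a gap. Everything else is the routine algebraic cancellation displayed above.
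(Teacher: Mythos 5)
Your proof is correct, and it is the standard induction on $n$ via the recurrences of Proposition~\ref{conrec}; the paper itself omits the proof entirely, deferring to the references \cite{khin} and \cite{olds}, where this same argument appears. Your verification that the conventions $\lambda_{-1}=0$, $\lambda_0=1$, $\mu_{-1}=1$, $\mu_0=0$ make the base case work is exactly the point of care needed, so nothing is missing.
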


\begin{defin}
Let $\displaystyle G=\bigcup G_i$ be a dual graph with its defining set of data. We define the associated continued fractions:

\begin{equation*}
\tilde \beta_i := \left[a_1^{(i)}, a_2^{(i)}, \ldots, a_{m_i}^{(i)}\right]
\end{equation*}

\begin{equation*}
\beta_i' := \left[a_1^{(i)}, a_2^{(i)}, \ldots, a_{m_i}^{(i)}, 1\right]
\end{equation*}
Here $\beta_i'$ is only defined if $\tilde \beta_i$ is rational, i.e. if $m_i < \infty$. Let $\beta_i'=p_i/q_i$. Define $\beta_i$ recursively as follows:

\begin{equation} \label{betarecur}
\left\{
\begin{array}{l}
\beta_0 := \nu(x)
\\\\
\displaystyle \beta_i := q_{i-1}\beta_{i-1} + \frac{1}{q_1 \cdots q_{i-1}}\left(\beta_i'-1\right)\beta_0 \quad \quad \text{for\ } i\geq1
\end{array}
\right.
\end{equation}
\end{defin}

When $\nu$ is rank 1, the $\{\beta_i\}$ will be the values of the generating sequence elements, i.e. $\{\nu(Q_i)\}$, to be defined in Section \ref{sectgs}. Set $q_0=1$ by definition. Note that $\beta_0=1$ for normalized valuations of rank 1. The same recursive formula holds for rank 2 valuations with some small modifications and a different $\beta_0$. See Section \ref{sectgs} and Lemma \ref{genvalue}.

We will be interested in the convergents of $\tilde \beta_i$. Denote the $j$-th convergent by $\lambda_j^{(i)}/\mu_j^{(i)}$, where $1\leq j\leq m_i$. The parentheses superscripts will be suppressed when it is clear from context.

\begin{prop} \label{pq}
Let $\beta_i'=p_i/q_i$. Then,
\begin{equation*}
\left\{
\begin{array}{l}
p_i=\lambda_{m_i} + \lambda_{m_i-1}
\\\\
q_i=\mu_{m_i} + \mu_{m_i-1}
\end{array}
\right.
\end{equation*}
\end{prop}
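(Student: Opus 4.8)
The plan is to use the fact that $\beta_i'=\bigl[a_1^{(i)},\dots,a_{m_i}^{(i)},1\bigr]$ is, by its very definition, the continued fraction $\tilde\beta_i=\bigl[a_1^{(i)},\dots,a_{m_i}^{(i)}\bigr]$ with a single extra partial quotient $a_{m_i+1}:=1$ appended at the end (this is legitimate, since appending $1$ keeps all terms of index $\ge 2$ in $\mathbb N$, and $\tilde\beta_i$ is assumed rational so $m_i<\infty$). Because the recursion in Proposition \ref{conrec} computes $\lambda_j,\mu_j$ using only $a_1,\dots,a_j$, two continued fractions sharing their first $m_i$ partial quotients have identical convergents through index $m_i$; hence the convergents $\lambda_j^{(i)}/\mu_j^{(i)}$ of $\tilde\beta_i$ are simultaneously the convergents of $\beta_i'$ for $-1\le j\le m_i$, and $\beta_i'$, being a finite continued fraction of length $m_i+1$, equals its own $(m_i+1)$-th convergent.

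First I would apply the recursion of Proposition \ref{conrec} at index $m_i+1$ with $a_{m_i+1}=1$, giving
\[
\lambda_{m_i+1}=\lambda_{m_i}+\lambda_{m_i-1},\qquad \mu_{m_i+1}=\mu_{m_i}+\mu_{m_i-1},
\]
so that $\beta_i'=\lambda_{m_i+1}/\mu_{m_i+1}=(\lambda_{m_i}+\lambda_{m_i-1})/(\mu_{m_i}+\mu_{m_i-1})$. (One should check the small cases, e.g. $m_i=1$, where the conventions $\lambda_{-1}=0$, $\lambda_0=1$, $\mu_{-1}=1$, $\mu_0=0$ of Proposition \ref{conrec} are used; there $p_i=a_1^{(i)}+1$, $q_i=1$, matching $\beta_i'=[a_1^{(i)},1]$.)

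It remains to show this representation is already in lowest terms, so that it must coincide with the reduced fraction $p_i/q_i$. For this I would invoke the Determinant Formula (Proposition \ref{condet}) applied to the length-$(m_i+1)$ continued fraction $\bigl[a_1^{(i)},\dots,a_{m_i}^{(i)},1\bigr]$:
\[
\lambda_{m_i+1}\mu_{m_i}-\lambda_{m_i}\mu_{m_i+1}=(-1)^{m_i+1}.
\]
Any common divisor of $\lambda_{m_i+1}$ and $\mu_{m_i+1}$ divides the left-hand side, hence divides $\pm1$; thus $\lambda_{m_i+1}$ and $\mu_{m_i+1}$ are coprime, and since all partial quotients are positive we have $\mu_{m_i+1}>0$. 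Therefore $(\lambda_{m_i+1},\mu_{m_i+1})$ is exactly the numerator–denominator pair of $\beta_i'$ in lowest terms with positive denominator, i.e.\ $p_i=\lambda_{m_i}+\lambda_{m_i-1}$ and $q_i=\mu_{m_i}+\mu_{m_i-1}$.

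None of these steps is substantively hard; the only place warranting care is the bookkeeping in the first paragraph — namely, explicitly justifying that the convergents of $\tilde\beta_i$ and of $\beta_i'$ genuinely agree up to index $m_i$ and handling the degenerate index cases — so the "main obstacle" is really just precision with indices and conventions rather than any real mathematical difficulty.
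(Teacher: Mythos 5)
Your proposal is correct and follows essentially the same route as the paper: recognize $\beta_i'$ as $\tilde\beta_i$ with the partial quotient $1$ appended, note the convergents agree through index $m_i$, and apply the recursion of Proposition \ref{conrec} at index $m_i+1$. The only addition is your explicit coprimality check via the Determinant Formula, which the paper leaves implicit in writing $\beta_i'=p_i/q_i$ in lowest terms.
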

\begin{proof}
This is a straightforward computation. Notice $\displaystyle \beta_i'=[a_1,\ldots,a_{m_i},1]$ and so $\beta_i'$ shares the same $j$-th convergents with $\tilde \beta_i$ until $j=m_i+1$. Now use Proposition \ref{conrec} to get the desired result.
\end{proof}

We will be interested in the regular parameters after certain blowups and using sigma notation in subscripts is cumbersome for such purposes. The following shorthand will be adopted.

\begin{defin}
Denote by $\left(X_i,Y_i\right)$ the regular parameters after the \mbox{$\Sigma(i,0,1)$} blowup. Denote by $\left(\tilde X_i,\tilde Y_i\right)$ the regular parameters after the \mbox{$\Sigma(i+1,0,0)$} blowup. We will also say that these are the regular parameters at levels \mbox{$\Sigma(i,0,1)$} and \mbox{$\Sigma(i+1,0,0)$}, respectively. Notice that we go from $\left(\tilde X_{i-1},\tilde Y_{i-1}\right)$ to $(X_i,Y_i)$ after one $z$-blowup.
\end{defin}

Lemmas \ref{XY} and \ref{XYtilde} will be useful for computations involving blowups, and hence continued fractions. The following setting will be used in both lemmas.

Let the continued fraction $\beta=[a_1,\ldots,a_n]$ represent a sequence of point blowups. The dual graph will be just one dual graph piece. Let the $i$-th convergent of $\beta$ be denoted $\lambda_i / \mu_i$. Let $(X,Y)$ be the regular parameters before any blowups, i.e. at level 0. Let $\left(\tilde X,\tilde Y\right)$ be the regular parameters after the last blowup, i.e. at level $\sum_{k=1}^n a_k$. Let $(\xi_0,\zeta_0)$ be $(X,Y)$ and let $(\xi_j,\zeta_j)$ be the regular parameters at level $\sum_{k=1}^j a_k$, for  $j\geq 1$. Notice $(\xi_{n},\zeta_{n})$ is just $(\tilde X,\tilde Y)$.

\begin{lem} \label{XY}
We have the following relationships describing $\left(X,Y\right)$ in terms of $\left(\tilde X,\tilde Y\right)$:
\begin{equation*}
\left\{
\begin{array}{llll}
\text{for } n \text{ even,} & X = \tilde X^{\mu_{n-1}} \tilde Y^{\mu_n}  & \text{and} & Y = \tilde X^{\lambda_{n-1}} \tilde Y^{\lambda_n}
\\\\
\text{for } n \text{ odd,} & X = \tilde X^{\mu_n} \tilde Y^{\mu_{n-1}}  & \text{and} & Y= \tilde X^{\lambda_n} \tilde Y^{\lambda_{n-1}}
\end{array}
\right.
\end{equation*}
More generally,
\begin{equation*}
\left\{
\begin{array}{llll}
\text{for } j \text{ even,} & X = \xi_j^{\mu_{j-1}} \zeta_j^{\mu_j}  & \text{and} & Y = \xi_j^{\lambda_{j-1}} \zeta_j^{\lambda_j}
\\\\
\text{for } j \text{ odd,} & X = \xi_j^{\mu_j} \zeta_j^{\mu_{j-1}}  & \text{and} & Y= \xi_j^{\lambda_j} \zeta_j^{\lambda_{j-1}}
\end{array}
\right.
\end{equation*}
\end{lem}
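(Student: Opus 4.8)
\emph{Proof proposal.}

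The plan is to establish the more general $(\xi_j,\zeta_j)$ identities by induction on $j$ and then read off the $(\tilde X,\tilde Y)$ statement as the special case $j=n$, since by construction $(\xi_n,\zeta_n)=(\tilde X,\tilde Y)$. The two ingredients are the local transformation rules for blowups recalled in Section \ref{sectdg} and the convergent recursion $\lambda_j=a_j\lambda_{j-1}+\lambda_{j-2}$, $\mu_j=a_j\mu_{j-1}+\mu_{j-2}$ of Proposition \ref{conrec}, together with the boundary conventions $\lambda_{-1}=0$, $\lambda_0=1$, $\mu_{-1}=1$, $\mu_0=0$.

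The preliminary step is to record how one full segment of the dual graph piece acts on the regular parameters. The $j$-th segment is a run of $a_j$ consecutive blowups of a single kind: an odd-indexed segment consists of $x$-blowups, each of which fixes the first parameter and divides the second by it, and an even-indexed segment consists of $y$-blowups, each of which fixes the second parameter and divides the first by it. Composing $a_j$ such blowups therefore gives, for $j$ odd, $\xi_{j-1}=\xi_j$ and $\zeta_{j-1}=\xi_j^{a_j}\zeta_j$, and for $j$ even, $\zeta_{j-1}=\zeta_j$ and $\xi_{j-1}=\xi_j\zeta_j^{a_j}$. The $z$-blowups that occur between dual graph pieces are not part of this computation; they are treated separately, cf.\ Lemma \ref{XYtilde}.

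Now the induction proceeds on $j$ from $0$ to $n$. The base case $j=0$ is exactly the identification $(\xi_0,\zeta_0)=(X,Y)$: using the boundary conventions, $\xi_0^{\mu_{-1}}\zeta_0^{\mu_0}=\xi_0$ and $\xi_0^{\lambda_{-1}}\zeta_0^{\lambda_0}=\zeta_0$, which is the ``$j$ even'' formula. For the inductive step, assume the claim at level $j-1$; substitute the segment identities above into the inductive hypothesis and use Proposition \ref{conrec} to collapse the resulting exponents. For instance, when $j$ is odd (so $j-1$ is even) one obtains $X=\xi_{j-1}^{\mu_{j-2}}\zeta_{j-1}^{\mu_{j-1}}=\xi_j^{\mu_{j-2}+a_j\mu_{j-1}}\zeta_j^{\mu_{j-1}}=\xi_j^{\mu_j}\zeta_j^{\mu_{j-1}}$, and the identical manipulation with $\lambda$'s gives $Y=\xi_j^{\lambda_j}\zeta_j^{\lambda_{j-1}}$; the case $j$ even is entirely symmetric with the roles of $\xi_j$ and $\zeta_j$ interchanged. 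In particular the pair of exponent slots occupied by the $\lambda$'s and $\mu$'s alternates with the parity of $j$, exactly as in the statement.

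The one point that demands care is the bookkeeping of parities: remembering which of the two parameters survives an odd segment versus an even segment, and hence which exponent slot the convergents occupy at each level, and checking that this is consistent with the $j=0$ conventions. Beyond that the argument is a mechanical unwinding of the convergent recursion, so I do not expect a serious obstacle.
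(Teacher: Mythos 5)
Your proposal is correct and follows essentially the same route as the paper: the base case $j=0$ via the boundary conventions $\mu_{-1}=1,\mu_0=0,\lambda_{-1}=0,\lambda_0=1$, followed by induction on the segments using the blowup rules and the recursion of Proposition \ref{conrec}. The paper merely writes out the $j=1$ step explicitly and leaves the general inductive step as an exercise, which you have carried out correctly, including the parity bookkeeping.
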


\begin{proof}
First, $X=X^{1}Y^{0}=\xi_0^{\mu_{-1}}\zeta_0^{\mu_0}$ and $Y=X^{0}Y^{1}=\xi_0^{\lambda_{-1}}\zeta_0^{\lambda_0}$. At $\Sigma(1,0,a_1)$, i.e. after $a_1$ $x$-blowups, we get:
\begin{equation*}
X=\xi_1^{a_1\mu_0+\mu_{-1}}\zeta_1^{\mu_0}=\xi_1^{\mu_1}\zeta_1^{\mu_0}
\end{equation*}
and
\begin{equation*}
Y=\xi_1^{a_1\lambda_0+\lambda_{-1}}\zeta_1^{\lambda_0}=\xi_1^{\lambda_1}\zeta_1^{\lambda_0}
\end{equation*}
The rest is an easy induction exercise using Proposition \ref{conrec}.
\end{proof}

\begin{lem} \label{XYtilde}
We have the following relationships describing $\left(\tilde X,\tilde Y\right)$ in terms of $\left(X,Y\right)$:
\begin{equation*}
\left\{
\begin{array}{llll}
\text{for } n \text{ even,} & \tilde X = X^{\lambda_n}/Y^{\mu_n} & \text{and} & \tilde Y = Y^{\mu_{n-1}}/X^{\lambda_{n-1}}
\\\\
\text{for } n \text{ odd,} & \tilde X = X^{\lambda_{n-1}}/Y^{\mu_{n-1}} & \text{and} & \tilde Y = Y^{\mu_n}/X^{\lambda_n}
\end{array}
\right.
\end{equation*}
\end{lem}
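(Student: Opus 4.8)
The plan is to invert the relationships from Lemma \ref{XY} directly, using the Determinant Formula (Proposition \ref{condet}) as the key algebraic tool. Since Lemma \ref{XY} already expresses $(X,Y)$ as monomials in $(\tilde X, \tilde Y)$ with exponent matrix built from the convergents $\lambda_{n-1}, \lambda_n, \mu_{n-1}, \mu_n$, all that remains is to invert a $2\times 2$ integer matrix whose determinant is $\pm 1$. Concretely, for $n$ even we have from Lemma \ref{XY} that $X = \tilde X^{\mu_{n-1}}\tilde Y^{\mu_n}$ and $Y = \tilde X^{\lambda_{n-1}}\tilde Y^{\lambda_n}$, so the exponent matrix sending $(\log \tilde X, \log \tilde Y)$ to $(\log X, \log Y)$ is $\begin{pmatrix} \mu_{n-1} & \mu_n \\ \lambda_{n-1} & \lambda_n \end{pmatrix}$, with determinant $\mu_{n-1}\lambda_n - \mu_n\lambda_{n-1} = (-1)^n = 1$ by Proposition \ref{condet}. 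Its inverse is $\begin{pmatrix} \lambda_n & -\mu_n \\ -\lambda_{n-1} & \mu_{n-1}\end{pmatrix}$, which gives exactly $\tilde X = X^{\lambda_n}/Y^{\mu_n}$ and $\tilde Y = Y^{\mu_{n-1}}/X^{\lambda_{n-1}}$, as claimed. The $n$ odd case is handled the same way: the exponent matrix is $\begin{pmatrix} \mu_n & \mu_{n-1} \\ \lambda_n & \lambda_{n-1}\end{pmatrix}$ with determinant $\mu_n \lambda_{n-1} - \mu_{n-1}\lambda_n = -(-1)^n = (-1)^{n-1} = 1$, and inverting yields the stated formulas.

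Alternatively, and perhaps more in keeping with the elementary spirit of the paper, one can verify the formulas by direct substitution: plug the claimed expressions for $\tilde X, \tilde Y$ into the right-hand sides of Lemma \ref{XY} and check that one recovers $X$ and $Y$. For instance, in the $n$ even case, $\tilde X^{\mu_{n-1}}\tilde Y^{\mu_n} = \left(X^{\lambda_n}/Y^{\mu_n}\right)^{\mu_{n-1}}\left(Y^{\mu_{n-1}}/X^{\lambda_{n-1}}\right)^{\mu_n} = X^{\lambda_n\mu_{n-1} - \lambda_{n-1}\mu_n}\,Y^{-\mu_n\mu_{n-1} + \mu_{n-1}\mu_n} = X^{(-1)^n}Y^0 = X$, using Proposition \ref{condet} once more; the computation for $Y$ is identical in structure. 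This requires only that $(X,Y) \mapsto (\tilde X, \tilde Y)$ be a genuine change of regular parameters, which is guaranteed because each individual blowup is invertible over $K$ and the composite exponent matrix is unimodular.

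I do not anticipate a genuine obstacle here: the statement is a formal consequence of Lemma \ref{XY} together with the determinant identity $\lambda_n\mu_{n-1} - \lambda_{n-1}\mu_n = (-1)^n$. The only point requiring a modicum of care is bookkeeping the parity: one must track which of $\mu_n$ versus $\mu_{n-1}$ (and likewise for $\lambda$) sits in which exponent slot when $n$ is even versus odd, and confirm that the sign $(-1)^n$ coming out of Proposition \ref{condet} indeed cancels to give exponent $+1$ on the variable being recovered and $0$ on the other. Writing out both parity cases explicitly, as above, suffices to close the proof.
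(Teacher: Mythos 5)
Your proposal is correct and follows essentially the same route as the paper: the paper's proof is exactly the one-line computation $X^{\lambda_n}/Y^{\mu_n}=\tilde X^{\lambda_n\mu_{n-1}-\lambda_{n-1}\mu_n}\tilde Y^{\lambda_n\mu_n-\lambda_n\mu_n}=\tilde X$ via Lemma \ref{XY} and Proposition \ref{condet}, i.e.\ the unimodular matrix inversion you describe, with the other cases done analogously. Your additional consistency check by back-substitution is fine but redundant.
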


\begin{proof}
Assume $n$ is even. $X^{\lambda_n}/Y^{\mu_n}=\tilde X^{\lambda_n\mu_{n-1}-\lambda_{n-1}\mu_n}\tilde Y^{\lambda_n\mu_{n}-\lambda_{n}\mu_n}=\tilde X$ by Lemma \ref{XY} and Proposition \ref{condet}. The other cases are done analogously.
\end{proof}

\begin{rem}In matrix notation, the values of the regular parameters are related as follows:
\begin{equation}\label{matrixvalues}
\left\{
\begin{array}{llll}
\text{for } n \text{ even,} &
\left[\begin{array}{l} \nu(X) \\ \nu(Y) \end{array}\right] =
\left[\begin{array}{ll} \mu_{n-1} & \mu_n \\ \lambda_{n-1} & \lambda_n \end{array}\right]
\left[\begin{array}{l} \nu(\tilde X) \\ \nu(\tilde Y) \end{array}\right]
\\\\
\text{for } n \text{ odd,} &
\left[\begin{array}{l} \nu(X) \\ \nu(Y) \end{array}\right] =
\left[\begin{array}{ll} \mu_n & \mu_{n-1} \\ \lambda_n & \lambda_{n-1} \end{array}\right]
\left[\begin{array}{l} \nu(\tilde X) \\ \nu(\tilde Y) \end{array}\right]
\end{array}
\right.
\end{equation}
\end{rem}

\section{generating sequences} \label{sectgs}

Generating sequences are a tool used in the study of the value semigroup, which in turn encodes information about the resolution of singularities as one of its important applications.

\begin{defin}
The {\it value semigroup} is:
\begin{equation*}
S:=\left\{\nu(x) \left.|\right. x\in \mathfrak{m} \right\}
\end{equation*}
\end{defin}

Note that $S$ is well-ordered since $R$ is Noetherian.

\begin{defin}
Let $I_s:=\{x\in R \left.|\right. \nu(x)\geq s\}$ and $I_s^+:=\{x\in R \left.|\right. \nu(x)> s\}$, where $s\in S$. Contractions of ideals in $V$ to $R$ are called {\it $\nu$-ideals}. The $\{I_s\}$ and $\{I_s^+\}$ are $\nu$-ideals. In fact, $\displaystyle \{I_s\}_{s\in S}$ is the set of all $\nu$-ideals in $R$.
\end{defin}

\begin{defin}
Since $\nu$-ideals are often studied in the context of the {\it associated graded algebra} (over $k$), we define it here:
\begin{equation*}
\text{gr}_{\nu}(R):=\bigoplus_{s\in S} \frac{I_s}{I_s^+}
\end{equation*}
\end{defin}

Geometric interpretations of the associated graded algebra can be found in \cite{tei}.

\begin{defin}
Let $\{Q_i\}_{i=0}^{g'}$ be a (possibly infinite) sequence of elements of $\mathfrak m$. We say that $\{Q_i\}$ is a {\it generating sequence} for $\nu$ if every $\nu$-ideal $I_s \subset R$ is generated by the set:
\begin{equation*}
\left\{\prod_i Q_i^{\alpha_i} \left.\mid\right. \alpha_i \in \mathbb{N}_0, \sum_i \alpha_i \nu(Q_i) \geq s\right\}
\end{equation*}
A {\it minimal generating sequence} is one in which exclusion of any $Q_j$ will cause $\{Q_i\}_{i\neq j}$ to not be a generating sequence.
\end{defin}

In other words, the value semigroup $S$ is given by:
\begin{equation*}
S=\left\{\sum_{i} \alpha_i \nu(Q_i) \left|\right. \alpha_i \in \mathbb{N}_0\right\}
\end{equation*}
and we prefer to think about generating sequences from this perspective. The only discrepancy occurs in the Type 4.1 case. There an infinite number of elements $\displaystyle \{Q_i\}_{i=0}^{\infty}$ is needed to generate the value ideals, but only a finite number of elements $\displaystyle \{Q_i\}_{i=0}^{g}$ is needed to generate the value semigroup. Here $\{Q_i\}_{i=g+1}^{\infty}$ does not generate any new values, yet $\displaystyle \{Q_i\}_{i=g+1}^{\infty}$ is required to be part of the generating sequence as defined above. This will be discussed in greater detail later.

It is known to specialists that minimal generating sequences are of the following form: $Q_0=x, Q_1=y$, and for $i\geq 2$,
\begin{equation}  \label{genrecur}
Q_i=Q_{i-1}^{q_{i-1}}-\sum_{h=1}^{\delta_i} u_h^{(i)} \prod_{j=0}^{i-2} Q_j^{\gamma_{j,h}^{(i)}}
\end{equation}
where $\displaystyle \beta_i'=\frac{p_i}{q_i}$, $\delta_i\in\mathbb{N}$, $\gamma_{j,h}^{(i)}\in\mathbb N_0$, and $0\neq u_h^{(i)}\in k$ such that:
\begin{equation*}
\begin{array}{lll}
\displaystyle \sum_{j=0}^{i-2} \gamma_{j,h}^{(i)}\cdot\nu(Q_j) = q_{i-1}\nu(Q_{i-1}) & , & \text{for } 1\leq h \leq \delta_i
\end{array}
\end{equation*}
Also $\sum u_h\neq 0$ and the $\{u_h\}$ encode information about the centers in the sequence of blowups.

The total number of elements of $\{Q_i\}_{i=0}^{g'}$, i.e. $g'+1$ (possibly infinite), depends on the valuation and can be deduced from the shape of the dual graph. See Section \ref{sectproofs}.

We will provide a new proof that $\{Q_i\}_{i=0}^{g'}$ is a minimal generating sequence for non-divisorial valuations. For our purposes (see Section \ref{sectcr}), we will ignore divisorial valuations. For the sake of clarity, an overview of the proof is given here, but the technical details are left for Section \ref{sectproofs}.

The main idea is very simple: the sequence of blowups will be used to sieve the elements of the value semigroup. $Q_0=x$ and $Q_1=y$ can generate all values of the form: $\displaystyle \alpha_0 \nu(Q_0)+\alpha_1\nu(Q_1)$. Let $S_1:=\{\alpha_0\nu(Q_0)+\alpha_1\nu(Q_1)\}$ and in the general case, let $S_i:=\{\sum_{j=0}^{i} \alpha_j\nu(Q_j)\}$. To generate more elements in $S\setminus S_1$ (or $S\setminus S_i$ in general), a new $Q_2$ (or $Q_{i+1}$ respectively) must be chosen to generate values with new larger denominators or values which increase the rank or rational rank of the values considered thus far in $S_1$ (or $S_i$ in general). These new elements of the generating sequence will be chosen to satisfy various properties according to the dual graph of $\nu$.

The possibility of new generating sequence elements in this process that don't introduce new denominators -- or a rank or rational rank jump -- will be discussed later.

Notice that $x$-blowups involve subtracting the value of the regular parameter $x_i$ to get the value of the next parameter $y_{i+1}$, $\nu(y_{i+1})=\nu(y_i)-\nu(x_i)$, hence an $x$-blowup cannot introduce a new denominator in the values of the regular parameters at the next step. Similarly, $x$-blowups cannot introduce an irrational or increase the rank of the values already sieved. Notice $y$-blowups also have the same limitations. On the other hand, $z$-blowups can introduce new denominators or irrationals or increase the rank since $y_i-cx_i=x_{i+1}y_{i+1}$, where $c$ is the residue of $y_i/x_i$. If $\nu(x_i)=\nu(y_i)$, then $\nu(y_i-cx_i)$ can be greater than $\nu(x_i)$, invoking the ultrametric inequality. This phenomenon can introduce a new denominator (and so forth) in $\nu(y_{i+1})$, so we will call such a $\nu(y_{i+1})$ a {\it jump value}. We naturally turn our attention to the regular parameters at $\Sigma(i,0,0)$ and $\Sigma(i,0,1)$, before and after $z$-blowups. At $\Sigma(2,0,0)$, $Q_2$ must have strict transform $\tilde Y_1-c_2\tilde X_1$ (up to units) and in general at $\Sigma(i,0,0)$, $Q_i$ must have strict transform $\tilde Y_{i-1}-c_i\tilde X_{i-1}$, where $c_i$ is the residue of $\tilde Y_{i-1}/\tilde X_{i-1}$. If $\{Q_i\}$ is defined as in Equation (\ref{genrecur}), then this necessary property of their strict transforms, necessary in order to have jump values, will be shown in Lemma \ref{strict}. Note that the values $\{\nu(Y_i)\}$ are jump values.

If there is a rational rank or rank jump at $\Sigma(i,0,1)$, then we will not have another $z$-blowup available to introduce yet another node $\Sigma(i+1,0,1)$ in the dual graph since we won't be able to get $\nu(\tilde X_i)=\nu(\tilde Y_i)$ via $x$-blowups and $y$-blowups. Only one such value jump can occur in a given dual graph and it must manifest itself in the last dual graph piece.

Let $f\in R$, so $\displaystyle f=x_i^{e_{1,i}}y_i^{e_{2,i}}f^{(i)}$ in $R_i$. It is easy to see that the values of the strict transforms $\nu(f^{(i)})$ decrease as $i$ increases, until the strict transform becomes a unit. This decrease in strict transform values correspond to an increase in the values of the exceptional transforms. There is the logical possibility of many elements in $R$ with the same strict transform $\displaystyle \tilde Y_{i-1} - c_i \tilde X_{i-1}$. We are interested in the ones with the smallest exceptional transform values. The $\{Q_i\}$ must include the minimal valued elements in $R$ that introduce the jump values $\nu(Y_i)=\nu(\tilde Y_{i-1}-c_i\tilde X_{i-1})-\nu(X_i)$ at $\Sigma(i,0,1)$. This minimality property will be shown in Lemma \ref{minimalvalue}.

As alluded to earlier, there is the logical possibility of two or more generating sequence elements, say $Q_i$ and $\overline Q_i$, leading to the introduction of the same new denominator at $\Sigma(i,0,1)$, yet both $Q_i$ and $\overline Q_i$ are necessary to include in a minimal generating sequence. This redundancy is shown to be impossible in Lemma \ref{nonredundant}, if we adopt the viewpoint that the $\{Q_i\}$ should generate the value semigroup instead of the value ideals. Thus, one and only one minimal generating sequence element introduces each new denominator in the process of sieving through the value semigroup.

Similarly, for Types 2, 3 and 4.2, there is the logical possibility of two or more generating sequence elements, say $Q_{g'}$ and $\overline Q_{g'}$, that share the same strict transform at $\Sigma(g',0,0)$, hence opening up the possibility of both being necessary to include in a minimal generating sequence. Lemma \ref{nonredundant234} will show that this redundancy is impossible.

The discussion above is summarized by saying the shape of the dual graph dictates how many elements are in a minimal generating sequence.

Now we need a classical theorem in valuation theory:

\begin{thm}(Abhyankar) \label{lu}
\\
$R$ blows up to become the valuation ring $V$ in the limit of the sequence of point blowups:
\begin{equation*}
V=\bigcup_{i=0}^\infty R_i
\end{equation*}
\end{thm}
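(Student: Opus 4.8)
The plan is to recognize the identity $V=\bigcup_i R_i$ as the standard reformulation of local uniformization in dimension two and to prove it via its two obvious inclusions. The inclusion $\bigcup_i R_i\subseteq V$ is immediate: $\nu$ is centered on every $R_i$, so each $R_i\subseteq V$. For the reverse inclusion I would take $z\in V$, so $\nu(z)\ge 0$, and write $z=f/g$ with $f,g\in\mathfrak m$ (if $f$ or $g$ is a unit of $R$, then $z\in R$ already) and $\nu(f)\ge\nu(g)$; the goal is then to produce an index $j$ with $z\in R_j$.

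I would do this in two steps. \emph{Step 1 (monomialize $z$).} I claim that for every nonzero $h\in R$ there is an index $i$ with $h^{(i)}$ a unit of $R_i$ — equivalently, the center $\eta_i$ eventually lies off the curve $\{h=0\}$; recall from Section \ref{sectdg} that the strict-transform values $\nu(h^{(i)})$ are strictly decreasing as long as they are positive, so the point is precisely that this can happen only finitely often. Granting this for $h=f$ and $h=g$, for $j$ large we have in $R_j$ that $f=x_j^{a}y_j^{b}\cdot(\text{unit})$ and $g=x_j^{c}y_j^{d}\cdot(\text{unit})$ with $a,b,c,d\in\mathbb N_0$, hence $z=x_j^{\,p}y_j^{\,q}v$ with $v\in R_j^{\times}$, $p=a-c$, $q=b-d\in\mathbb Z$, and $p\,\nu(x_j)+q\,\nu(y_j)=\nu(z)\ge 0$. \emph{Step 2 (clear negative exponents).} If $p<0$ or $q<0$, continue blowing up along $\nu$, tracking $(p,q)$ under the three transformations: an $x$-blowup gives $(p,q)\mapsto(p+q,q)$, a $y$-blowup gives $(p,q)\mapsto(p,p+q)$, and a $z$-blowup gives $(p,q)\mapsto(p+q,0)$ after absorbing $(y_{i+1}+c_i)^{q}$ into the unit. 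I would then show that $N:=\max(0,-p)+\max(0,-q)$ never increases and, over each maximal run of same-type blowups together with the following transition, strictly decreases unless $p,q$ have both already become non-negative; since $N\in\mathbb N_0$, after finitely many further blowups $p,q\ge 0$, so $z\in R_j$ and we are done.

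The hard part is the finiteness asserted in Step 1: that the strict transform of a plane curve eventually misses the center, i.e. that only finitely many point blowups are needed, is exactly a local form of resolution of plane curve singularities. It is not a formal consequence of the blowup formalism — the monotone decrease of $\nu(h^{(i)})$ alone does not forbid an infinite strictly decreasing sequence in a non-discrete value group — and it is the point at which Abhyankar's (or Zariski's) theorem genuinely enters; in practice one simply quotes it. Step 2, by contrast, is elementary but must be checked case by case against the rule selecting $x$-, $y$-, or $z$-blowups: the constraint $\nu(z)\ge0$ forces the non-negative exponent to be large at each type switch, and for rank-$1$ valuations a run of $x$- or $y$-blowups is automatically finite because $\nu(y_{i+1})=\nu(y_i)-\nu(x_i)$ would otherwise push a parameter value below zero, while for the rank-$2$ types, where infinite runs genuinely occur, that same relation shows a persistently negative exponent would force $\nu(z)<0$, contradicting $z\in V$. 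A cleaner alternative organization handles both steps at once by invoking local uniformization directly for the curve $\{fg=0\}$: a finite sequence of point blowups along $\nu$ turns its total transform into a normal crossings divisor at a center lying only on exceptional components, at which point $z$ is automatically a monomial with non-negative exponents in the local parameters, giving $z\in R_j$.
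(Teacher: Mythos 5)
The paper's own ``proof'' of Theorem \ref{lu} is a one-line citation of Lemma 12 of \cite{ab}, so any honest derivation is necessarily a different route. Most of yours is sound: the bookkeeping in Step 2 is correct in outline (and you rightly note that the selection rule for $x$-, $y$-, $z$-blowups together with $\nu(z)\geq 0$ is what prevents the deficiency $N$ from growing), and you correctly isolate the one place where a deep input is needed.

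But Step 1 as stated contains a genuine error, not just a deferred citation. The claim that every nonzero $h\in R$ eventually has unit strict transform is false for curve valuations. Take $\nu$ of Type 4.2 and $h=Q_{g+1}$, the element whose value $\bigl(1,n'/(q_1\cdots q_g)\bigr)$ carries the rank jump: its strict transform at $\Sigma(g+1,0,1)$ is $Y_{g+1}$ up to a unit (Lemma \ref{leveli}), and every subsequent $x$-blowup sends $y_j\mapsto x_{j+1}y_{j+1}$, so the strict transform of $\{h=0\}$ passes through every later center and $h^{(i)}$ is never a unit. This is consistent with the monotone decrease of $\nu(h^{(i)})$, because in $\mathbb{Z}\times\frac{1}{q_1\cdots q_g}\mathbb{Z}$ with lexicographic order there are infinite decreasing chains of positive elements such as $(1,0)>(1,-1)>(1,-2)>\cdots>(0,1)$. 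Quoting embedded resolution of plane curves does not repair this: resolution makes the strict transform smooth and transverse to the exceptional divisor, but it cannot move the center off the curve, since for a curve valuation the center follows the curve by definition; the same defect afflicts your ``cleaner alternative,'' where a center lying only on exceptional components is unattainable. The gap is repairable with one extra observation: once $C^{(j)}$ is smooth and transverse at $\eta_j$, its local equation $h^{(j)}$ is a regular parameter of $R_j$ (up to a unit it equals $y_j$, both generating the height-one prime of elements whose value has positive first coordinate), so $f$ and $g$ are still units times monomials in $(x_j,y_j)$ and Step 2 applies verbatim. With that amendment, and a check that for Type 3 a resolved strict transform separates from the satellite centers after one more blowup, your argument goes through; as written, it would fail exactly in the Type 4.2 case.
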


\begin{proof}
See Lemma 12 of \cite{ab}.
\end{proof}

Theorem \ref{lu} implies that the sequence of blowups will detect all of the values from $\mathfrak{m}_V$, hence all of the values in the value semigroup will also be reflected in the regular parameters of $\{R_j\}$. The changes in values from $S_{i-1}$ to $S_i$ will show up in the values of $(x_j,y_j)$ for some $j$. In particular, the previous discussion implies that only the parameters at $j=\Sigma(i,0,1)$ will matter, and these in turn come from transforming the $\{Q_i\}$. Thus, the sieving process can be completed by only looking at the $\{Q_i\}$ and hence they form a minimal generating sequence. This is stated as Theorem \ref{minimalgs} below.

Theorem \ref{unique} shows that a generating sequence provides a unique representation of values in the value semigroup.

We now give a description of valuations of Types 0, 3 and 4 are how they are normalized in this paper in preparation for the proofs in Section \ref{sectproofs}.

Let $\nu$ be a Type 0 divisorial valuation with $n$ blowups. $R_n$ will be a discrete valuation ring with uniformizing parameter $x_n$. The last exceptional component $L_n$ will be given locally by $x_n=0$. Let $f\in R$. If $\displaystyle f=x_n^ef^{(n)}$, $x_n\nmid f^{(n)}$, then $\displaystyle \nu(f)=\frac{e}{b}$, where $\displaystyle \nu(x_n)=\frac{1}{b}$ and the value group $\displaystyle \Gamma = \frac{1}{b}\mathbb{Z}$. Note that the valuation was normalized so that $\nu(x)=1$ and Lemmas \ref{valuetilde} and \ref{beta'-1} imply that $\displaystyle b=\prod_{i=1}^{g} q_i$.

Let $\nu$ be a Type 3 exceptional curve valuation. Consider $R_n$ where the integer $\displaystyle n=\Sigma(g,m_g-2,a_{m_g-1})$. There are two cases, depending on whether $m_g$ is odd or even. Normalize $\nu$ as follows. In the odd case, $\nu(x_n)=(0,1)$ and $\nu(y_n)=(1,0)$, and there is an infinite number of $x$-blowups after $n$. In the even case, $\nu(x_n)=(1,0)$ and $\nu(y_n)=(0,1)$, and there is an infinite number of $y$-blowups after $n$. The value group $\Gamma = \mathbb{Z}\times\mathbb{Z}$ in both the odd and even cases. After normalizing at level $n$, the values at level 0 are computed using Formula (\ref{matrixvalues}) (after the last remark in Section \ref{sectdg}) and some basic lemmas from Section \ref{sectproofs}.

Notice the tail dual graph piece $G_{g+1}$ in the Types 4.1 and 4.2 cases cannot admit $y$-blowups since that would force an even leg to show up in $G_{g+1}$. As such, the only blowups available in $G_{g+1}$ are $x$-blowups and $z$-blowups. Both the Type 4.1 and 4.2 cases have a mix of $x$-blowups and $z$-blowups in $G_{g+1}$, but there's an infinite number of $z$-blowups in the Type 4.1 case, whereas there's only one $z$-blowup in $G_{g+1}$ in the Type 4.2 case. There cannot be an infinite number of consecutive $x$-blowups in the Type 4.1 case because there is no rank jump, hence the need for the infinite number of $z$-blowups. Notice the infinite number of $z$-blowups in the Type 4.1 case don't introduce new denominators, or else a $y$-blowup would show up, hence these $z$-blowups don't affect the value semigroup.

In the Type 4.1 case, the mixture of $x$-blowups and $z$-blowups reflect a potential ``analytic change of coordinates.'' As a basic illustrative example, consider the analytic curve given by the power series $\displaystyle y'=\sum_{j=1}^{\infty} c_{e_j}x^{e_j}$, where $c_{e_j}\in k$ and $\{e_j\}$ is a strictly increasing sequence of positive integers. Assume $y'\notin R$. Define:
\begin{equation*}
\begin{array}{l}
Q_0=x \\
Q_1=y \\
\displaystyle Q_2=y-c_{e_1}x^{e_1} \\
\displaystyle Q_3=y-c_{e_1}x^{e_1}-c_{e_2}x^{e_2} \\
\end{array}
\end{equation*}
and in general let $\displaystyle Q_i=y-\sum_{j=1}^{i-1}c_{e_j}x^{e_j}$ for $i\geq 2$. Here $\beta_0=\nu(Q_0)=1$, $\nu(Q_1)=e_1$, $\nu(Q_2)=e_2$ and in general $\beta_i=\nu(Q_i)=e_i$, which comes from the order valuation with respect to $x$. The idea is to let $y$ simulate $y'$ even though $y'$ is not in $R$. Notice:
\begin{equation*}
Q_i=y-\sum_{j=1}^{i-1} c_{e_j}x^{e_j} \neq u x^{e_i}
\end{equation*}
where $u$ is a unit, since $x$ and $y$ are regular parameters. This implies $Q_i$ is needed to generate the $\nu$-ideal $\displaystyle I_{e_i}$. Hence, all of the $\{Q_i\}_{i=0}^{\infty}$ are necessary in a minimal generating sequence; such a generating sequence has an infinite number of elements. We could think of $\nu(Q_i)=(0,e_i)$ and in the limit of blowups we could potentially get $\nu(y')=(1,0)$, a jump in the rank that could also occur if we allowed the analytic change of coordinates from $y$ to $y'$. However, $R$ itself only sees the second non-zero coordinate in values since $y'\notin R$, hence the valuation on $\text{Frac}(R)$ is rank 1. This idea is related to the notion that valuations can ``jump rank'' in the completion of $R$.

The dual graph of the Type 4.1 example just considered would be only the tail piece $G_{g+1}$, where $g=0$. The lack of $y$-blowups here imply that there are no general $\Sigma(i,0,1)$ nodes in the dual graph. The values $\nu(x_i)=1$ for all $i$. We start with $\nu(y)=e_1$, so there are $e_1-1$ $x$-blowups until $\nu(x_{e_1-1})=\nu(y_{e_1-1})=1$. Now a $z$-blowup is performed and we have $\nu(y_{e_1})=e_2-e_1$ (from blowing up $Q_2$), which next leads to $e_2-e_1-1$ $x$-blowups. The sequence of blowups is as follows: $e_1-1$ $x$-blowups, a $z$-blowup, $e_2-e_1-1$ $x$-blowups, a $z$-blowup, $e_3-e_2-1$ $x$-blowups, a $z$-blowup, $e_4-e_3-1$ $x$-blowups, a $z$-blowup, and so forth. This is easily seen by applying the sequence of blowups above to the set $\{Q_i\}$.

The phenomenon noted above can be shifted to $\Sigma(g+1,0,1)$ to yield the fact that minimal generating sequences for more general Type 4.1 dual graphs can also have an infinite number of elements. The tail piece $G_{g+1}$ is the crucial part. Familiarity with the contents of Section \ref{sectproofs} will be helpful for the following arguments. It might even be wise to read the following arguments lightly at first, and return here after Section \ref{sectproofs} has been read.

Now in this setting $\displaystyle y'=\sum_{j=1}^{\infty} c_{e_j}X_{g+1}^{e_j}$, and $Y_{g+1}$ is used to simulate $y'$. Suppose $\displaystyle q_g \nu(Q_g)=\frac{n}{q_1\cdots q_g}$. To get $Y_{g+1}$ into play, the proof of Lemma \ref{genvalue} implies
\begin{equation*}
Q_{g+1}=X_{g+1}^nY_{g+1}u
\end{equation*}
where
\begin{equation*}
Q_{g+1}:=Q_{g}^{q_g}-T_1
\end{equation*}
and
\begin{equation*}
T_1:=\sum_{h} u_{h,1} \prod_{j=0}^{g-1}Q_j^{\gamma_{j,h}^{(1)}}
\end{equation*}
such that
\begin{equation*}
\sum_{j} \gamma_{j,h}^{(1)}\cdot \nu(Q_j)=q_g \nu(Q_g)=\frac{n}{q_1\cdots q_g}.
\end{equation*}
Here $u$ is a unit in $\displaystyle R_{\Sigma(g+1,0,1)}$, the $\{u_{h,1}\}\in k$, and $\sum u_{h,1}\neq 0$. Define:
\begin{equation*}
Q_{g+i}:=Q_g^{q_g}-\sum_{j=1}^i T_j \text{ , for } i\geq 1
\end{equation*}
where
\begin{equation*}
T_i:=\sum_{h} u_{h,i} \prod_{j=0}^{g-1}Q_j^{\gamma_{j,h}^{(i)}}
\end{equation*}
such that
\begin{equation*}
\sum_{j} \gamma_{j,h}^{(i)}\cdot \nu(Q_j)=\frac{n+e_{i-1}}{q_1\cdots q_g} \text{ , for } i\geq 1.
\end{equation*}
where $e_0=0$ by convention. The $\{u_{h,i}\}\in k$ and furthermore, for $i\geq 2$, we have $0\neq \sum_h u_{h,i}\cong uc_{e_{i-1}}$ at $\Sigma(g+1,0,1)$. The motivation comes from Lemma \ref{leveli+}: $T_i=X_{g+1}^{n+e_{i-1}}\left(\sum_h u_{h,i}\right)$. Factoring out $uX_{g+1}^{n}$ allows for the mimicking of $y'$ at $\Sigma(g+1,0,1)$ as was done earlier.

In other words,
\begin{equation*}
\begin{array}{rl}
\displaystyle Q_{g+2} & \displaystyle =X_{g+1}^{n}Y_{g+1}u-X_{g+1}^{n+e_1}uc_{e_1}\\\\
& \displaystyle =uX_{g+1}^{n}\left(Y_{g+1}-c_{e_1}X_{g+1}^{e_1}\right)
\end{array}
\end{equation*}
where the $X_{g+1}^{n}$ is needed to reach level $\Sigma(g+1,0,1)$. Factoring out $X_{g+1}^{n}$ allows for the setup in the simpler example.

At level $\Sigma(g+1,0,1)$,
\begin{equation*}
\begin{array}{rl}
\displaystyle Q_{g+i} & \displaystyle = X_{g+1}^{n}Y_{g+1}u-\sum_{j=1}^{i-1} X_{g+1}^{n+e_{j}}uc_{e_{j}} \\
& \displaystyle = uX_{g+1}^n\left(Y_{g+1}-\sum_{j=1}^{i-1} c_{e_j}X_{g+1}^{e_{j}}\right)
\end{array}
\end{equation*}

For $i\geq 1$,
\begin{equation*}
\beta_{g+i}=\nu(Q_{g+i})=\frac{n+e_i}{q_1\cdots q_g}
\end{equation*}
It is an easy exercise to see that $\displaystyle \{Q_i\}_{i=0}^{\infty}$ are all necessary in a minimal generating sequence using the fact that $X_{g+1}$ and $Y_{g+1}$ are regular parameters and then essentially applying the same argument as in the earlier simpler case. Notice the value group $\displaystyle \Gamma=\frac{1}{q_1\cdots q_g}\mathbb{Z}$.

For Type 4.1 valuations, although $\{Q_i\}_{i=0}^{\infty}$ form a minimal generating sequence viewed from the perspective of value ideals, only $\{Q_i\}_{i=0}^g$ is necessary to generate the value semigroup.

Lastly, the Type 4.2 valuations are normalized with $\beta_0=\nu(x)=(0,1)$. At $\Sigma(g+1,0,1)$,
\begin{equation*}
\nu(X_{g+1})=\left(0,\frac{1}{q_1\cdots q_g}\right)
\end{equation*}
and
\begin{equation*}
\nu(Y_{g+1})=\left(1,\frac{n}{q_1\cdots q_g}\right)
\end{equation*}
where $n\in\mathbb{Z}$. Notice the second coordinate can be negative since the first coordinate is positive, hence this value is in the value semigroup $S$.

By Lemma \ref{genvalue}, $\nu(Q_{g+1})=q_g \nu(Q_g)+\nu(Y_{g+1})$ hence
\begin{equation*}
\nu(Q_{g+1})=\left(1,\frac{n'}{q_1\cdots q_g}\right)
\end{equation*}
where $n'\in\mathbb{Z}$. The value group $\displaystyle \Gamma=\mathbb{Z}\times\frac{1}{q_1\cdots q_g}\mathbb{Z}$.

In $G_{g+1}$, the Type 4.2 case has one $z$-blowup to go from $\Sigma(g+1,0,0)$ to $\Sigma(g+1,0,1)$ followed by an infinite number of $x$-blowups. Notice the rank jump forces the infinite number of $x$-blowups since it is always true that
\begin{equation*}
\left(1,\frac{n}{q_1\cdots q_g}\right)>\left(0,\frac{1}{q_1\cdots q_g}\right)
\end{equation*}
for any $n \in\mathbb{Z}$.

\section{proofs} \label{sectproofs}

The notation used here is the same as in the previous sections. This section will flesh out the proof of minimal generating sequences outlined in Section \ref{sectgs}.

A minimal generating sequence $\{Q_i\}_{i=0}^{g'}$ has $g'+1$ elements. The last index $g'$ depends on the dual graph of $\nu$ and it will be shown that:
\begin{equation*}
\begin{array}{|c|c|} \hline
\text{Type} & g'\\ \hline
1 & \infty \\ \hline
2 & g \\ \hline
3 & g \\ \hline
4.1 & g \\ \hline
4.2 & g+1 \\ \hline
\end{array}
\end{equation*}
We will take $g'$ to be the respective values shown in the table above to make the arguments cleaner. This choice will be justified later in the proof of Lemma \ref{genvalue} when establishing minimal generating sequences from given dual graphs.

\begin{rem}
  For divisorial valuations (Type 0), $g'=g$ if $a_1^{(g+1)}=0$, and $g'=g+1$ if $a_1^{(g+1)}\neq 0$. If we adopt the viewpoint that generating sequences should generate the value semigroup instead of the value ideals, then $g'=g$ in both cases. However, for some applications such as computing Poincar\'e series, it doesn't make sense to exclude the last $Q_{g+1}$ in the $a_{1}^{(g+1)}\neq 0$ case. We will restrict our attention to the non-divisorial valuations and mention this only for completeness.
\end{rem}

\begin{lem} \label{valuetilde} At $\displaystyle \Sigma(i,0,0)$, where $2 \leq i \leq g'$:
\begin{equation*}
\left\{
\begin{array}{l}
\displaystyle \nu(\tilde X_{i-1})=\frac{1}{q_1\cdots q_{i-1}}\beta_0
\\\\
\displaystyle \nu(\tilde Y_{i-1})=\frac{1}{q_1\cdots q_{i-1}}\beta_0
\end{array}
\right.
\end{equation*}
\end{lem}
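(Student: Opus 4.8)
The plan is to prove this by induction on $i$, with the key mechanism being the change-of-parameters formulas from Lemmas \ref{XY} and \ref{XYtilde} together with the recursive definition of $\beta_i$ in (\ref{betarecur}). The claim says that at level $\Sigma(i,0,0)$, i.e. just before the $z$-blowup that produces the node $\Sigma(i,0,1)$, the two regular parameters $\tilde X_{i-1}$ and $\tilde Y_{i-1}$ have equal value, namely $\beta_0/(q_1\cdots q_{i-1})$. The equality of the two values is exactly what makes a $z$-blowup legal at that stage, so geometrically this is expected; the content is the precise formula.

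First I would handle the base case $i=2$. Here $(\tilde X_1,\tilde Y_1)$ are the parameters obtained from $(X_1,Y_1)$ after the blowups recorded in the first dual graph piece $G_1$ — more precisely, after the segments $a_2^{(1)},\ldots,a_{m_1}^{(1)}$ of consecutive $x$- and $y$-blowups following the node. Actually, the cleanest route is to view the passage from level $0$ (parameters $x,y$ with $\nu(x)=\beta_0$) through the entire first piece $G_1$ to level $\Sigma(2,0,0)$ as a single continued-fraction block governed by $\tilde\beta_1 = [a_1^{(1)},\ldots,a_{m_1}^{(1)}]$ and $\beta_1' = [a_1^{(1)},\ldots,a_{m_1}^{(1)},1] = p_1/q_1$. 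Applying Lemma \ref{XYtilde} (with $n = m_1$) expresses $\tilde X_1, \tilde Y_1$ as monomials in $x,y$ with exponents given by the convergents $\lambda_{m_1},\mu_{m_1},\lambda_{m_1-1},\mu_{m_1-1}$ of $\tilde\beta_1$. One then computes $\nu(\tilde X_1)$ and $\nu(\tilde Y_1)$ using $\nu(x)=\beta_0$, $\nu(y)=\beta_1$ (noting $\beta_1 = q_0\beta_0 + (\beta_1'-1)\beta_0 = \beta_1'\beta_0$ since $q_0=1$, so $\nu(y) = (p_1/q_1)\beta_0$), and checks via Proposition \ref{pq} and the determinant formula (Proposition \ref{condet}) that both exponential combinations collapse to $\beta_0/q_1$. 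The "$+1$" appended in $\beta_1'$ is precisely what forces the two values to become equal rather than merely commensurable — this is the crux of why a $z$-blowup, not termination, happens there.

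For the inductive step, assume the formula at $\Sigma(i-1,0,0)$, so $\nu(\tilde X_{i-2}) = \nu(\tilde Y_{i-2}) = \beta_0/(q_1\cdots q_{i-2})$. One $z$-blowup takes $(\tilde X_{i-2},\tilde Y_{i-2})$ to $(X_{i-1},Y_{i-1})$ via $\tilde X_{i-2} = X_{i-1}$, $\tilde Y_{i-2} = X_{i-1}(Y_{i-1}+c_{i-1})$; here $\nu(Y_{i-1})$ is a jump value, strictly larger than $\nu(X_{i-1}) = \beta_0/(q_1\cdots q_{i-2})$. Then the block of blowups comprising the rest of $G_{i-1}$ (the even leg plus subsequent segments, governed by $\tilde\beta_{i-1}$) carries $(X_{i-1},Y_{i-1})$ to $(\tilde X_{i-1},\tilde Y_{i-1})$ at level $\Sigma(i,0,0)$, and again Lemma \ref{XYtilde} gives the monomial relations. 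The value computation now uses $\nu(X_{i-1}) = \beta_0/(q_1\cdots q_{i-2})$ together with $\nu(Y_{i-1})$, which by the recursion (\ref{betarecur}) satisfies $\nu(Q_{i-1}) = \beta_{i-1} = q_{i-2}\beta_{i-2} + \frac{1}{q_1\cdots q_{i-2}}(\beta_{i-1}'-1)\beta_0$, and one must relate $\nu(Y_{i-1})$ to $\beta_{i-1}$ (this is where Lemma \ref{genvalue}, cited forward, enters, or an appropriate direct argument). Plugging in and using Proposition \ref{pq} plus Proposition \ref{condet} as before should make the exponents telescope, yielding $\nu(\tilde X_{i-1}) = \nu(\tilde Y_{i-1}) = \beta_0/(q_1\cdots q_{i-1})$.

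The main obstacle I anticipate is bookkeeping: correctly tracking which convergents ($\lambda_j^{(i)}$ versus $\lambda_{j-1}^{(i)}$, and the parity of $m_i$) appear in the exponents after a block of $G_i$, and correctly identifying $\nu(Y_{i-1})$ in terms of the $\beta$'s and $q$'s before the remaining blowups of $G_{i-1}$ act on it. The parity cases in Lemma \ref{XYtilde} will require writing out the even-$m_i$ and odd-$m_i$ computations separately, and one has to be careful that the single extra $z$-blowup at the start of each piece shifts the relevant convergents by exactly the right amount so that the product $q_1\cdots q_{i-1}$ — rather than $q_1\cdots q_{i-2}\cdot(\text{something else})$ — emerges in the denominator. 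Verifying that the ultrametric inequality genuinely applies at the $z$-blowup (so $\nu(Y_{i-1}) > \nu(X_{i-1})$ strictly, ensuring no premature termination) is a small but essential point that should be flagged rather than assumed.
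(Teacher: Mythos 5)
Your write-up actually contains the one observation that makes the paper's proof short --- that $\nu(\tilde X_{i-1})=\nu(\tilde Y_{i-1})$ holds automatically at $\Sigma(i,0,0)$, since that equality is precisely the condition under which the next $z$-blowup occurs --- but you then set it aside and compute the two values separately, and this is where a genuine circularity enters. In your inductive step you need the jump value $\nu(Y_{i-1})$ expressed in terms of the $\beta$'s, and you propose to get it from Lemma \ref{genvalue} (or \ref{beta'-1}); but those lemmas are proved \emph{using} Lemma \ref{valuetilde}, so the forward citation you flag cannot be discharged without an independent argument that you do not supply. A similar, milder issue affects your base case: you feed in $\nu(y)=\beta_1=(p_1/q_1)\beta_0$, but the identification of $\nu(Q_1)$ with $\beta_1'\beta_0$ is itself only established later (in the opening of the proof of Lemma \ref{strict}), again by exploiting $\nu(\tilde X_1)=\nu(\tilde Y_1)$.

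The repair is exactly the observation you made: since the two values at $\Sigma(i,0,0)$ are equal by fiat, you only need to compute one of them, and for that you should run Lemma \ref{XY} \emph{forward} rather than Lemma \ref{XYtilde} backward. Taking $X=\tilde X_{i-2}$ (or $x$ in the base case) and $\beta=\tilde\beta_{i-1}$, Lemma \ref{XY} writes $\tilde X_{i-2}$ as $\tilde X_{i-1}$ and $\tilde Y_{i-1}$ raised to the exponents $\mu_{m_{i-1}}$ and $\mu_{m_{i-1}-1}$ in one order or the other; since the two new parameters have equal value, the order is irrelevant and $\nu(\tilde X_{i-2})=\bigl(\mu_{m_{i-1}}+\mu_{m_{i-1}-1}\bigr)\nu(\tilde X_{i-1})=q_{i-1}\nu(\tilde X_{i-1})$ by Proposition \ref{pq}. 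This disposes of both parity cases at once, needs no determinant formula, no jump values, and no knowledge of $\nu(y)$, and the induction closes immediately. Your route can be made to work, but only after independently deriving $\nu(Y_{i-1})$ from the blowup rules, which is more work than the lemma itself.
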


\begin{proof}
  At $\displaystyle \Sigma(i,0,0)$, it is always the case that $\nu(\tilde X_{i-1}) = \nu(\tilde Y_{i-1})$, which is what makes the next $z$-blowup possible here, so we only have to prove the result for $\nu(\tilde X_{i-1})$. Note that $\nu(x)=\beta_0$. Use induction on $i$. By Lemma \ref{XY}, $\nu(x)=\nu\left(\tilde X_{1}^{\mu_{m_1}+\mu_{m_1-1}}\right)$ in both the odd and even $m_1$ cases since $\nu(\tilde X_{1})=\nu(\tilde Y_{1})$. By Proposition \ref{pq}, $\mu_{m_1}+\mu_{m_1-1}=q_1$ and the base case is done. Assume the result is true up to $\Sigma(i-1,0,0)$. By Lemma \ref{XY}, $\nu(\tilde X_{i-2})=\nu\left(\tilde X_{i-1}^{\mu_{m_{i-1}}+\mu_{m_{i-1}-1}}\right)$ in both the odd and even $m_{i-1}$ cases. Here we set $\beta=\tilde \beta_{i-1}$, $X=\tilde X_{i-2}$, $Y=\tilde Y_{i-2}-c_{i-1}\tilde X_{i-2}$, $\tilde X=\tilde X_{i-1}$ and $\tilde Y=\tilde Y_{i-1}$ in applying Lemma \ref{XY}. Note that $\mu_{m_{i-1}}+\mu_{m_{i-1}-1}=q_{i-1}$ by Proposition \ref{pq}. Thus, $\nu(\tilde X_{i-2})=\nu\left(\tilde X_{i-1}^{q_{i-1}}\right)$ and the proof is complete using the inductive hypothesis.
\end{proof}

\begin{lem} \label{beta'-1} If $\nu$ is not Type 2, 3 or 4.2, then at $\Sigma(i,0,1)$, where $2\leq i \leq g'$:
\begin{equation*}
\left\{
\begin{array}{l}
\displaystyle \nu(X_i)=\frac{1}{q_1\cdots q_{i-1}}\beta_0
\\\\
\displaystyle \nu(Y_i)=\frac{1}{q_1\cdots q_{i-1}}\left(\beta_i'-1\right)\beta_0
\end{array}
\right.
\end{equation*}
If $\nu$ is Type 2 or 3, then the formulas hold except for $\nu(Y_g)$. If $\nu$ is Type 4.2, then the formulas hold except for $\nu(Y_{g+1})$. See Lemma \ref{genvalue}.
\end{lem}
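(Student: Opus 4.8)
**The plan is to prove Lemma \ref{beta'-1} by induction on $i$, using Lemma \ref{valuetilde} as the starting input at each stage and tracking values through one $z$-blowup followed by the chain of $x$- and $y$-blowups that make up the $i$-th dual graph piece $G_i$.**

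First I would set up the base case. At $\Sigma(2,0,0)$ we have $\nu(\tilde X_1)=\nu(\tilde Y_1)=\frac{1}{q_1}\beta_0$ by Lemma \ref{valuetilde}. Performing one $z$-blowup gives $\tilde X_1 = X_2$ and $\tilde Y_1 - c_2\tilde X_1 = X_2 Y_2$, so $\nu(X_2)=\nu(\tilde X_1)=\frac{1}{q_1}\beta_0$, which already matches the claimed formula for $\nu(X_i)$ with $i=2$ (note $q_1\cdots q_{i-1}=q_1$). For $\nu(Y_2)$ I would invoke Lemma \ref{XY} applied to the continued fraction $\tilde\beta_2 = [a_1^{(2)},\ldots,a_{m_2}^{(2)}]$ with $(\xi_0,\zeta_0)=(X_2,Y_2)$ and end parameters $(\tilde X_2,\tilde Y_2)$; combined with Lemma \ref{valuetilde} at $\Sigma(3,0,0)$, which tells us $\nu(\tilde X_2)=\nu(\tilde Y_2)=\frac{1}{q_1q_2}\beta_0$, Lemma \ref{XY} expresses $\nu(X_2)$ and $\nu(Y_2)$ as integer combinations of this common value with coefficients $\mu_{m_2}+\mu_{m_2-1}=q_2$ and $\lambda_{m_2}+\lambda_{m_2-1}=p_2$ respectively (using Proposition \ref{pq}). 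Hence $\nu(Y_2) = p_2 \cdot \frac{1}{q_1q_2}\beta_0 = \frac{p_2}{q_2}\cdot\frac{1}{q_1}\beta_0 = \beta_2' \cdot \frac{1}{q_1}\beta_0$, and subtracting off $\nu(X_2) = \frac{1}{q_1}\beta_0$ is \emph{not} quite the right bookkeeping — I need to be careful here. Actually $\nu(Y_2)$ as computed directly from $z$-blowup plus the piece should come out to $\frac{1}{q_1}(\beta_2'-1)\beta_0$: the $-1$ comes precisely from the fact that $Y_2$ at level $\Sigma(2,0,1)$ (right after the $z$-blowup, before the odd-leg $x$-blowups of $G_2$) equals $(\tilde Y_1 - c_2\tilde X_1)/X_2$, so $\nu(Y_2^{\text{post-}z}) = \nu(\tilde Y_1 - c_2\tilde X_1) - \nu(X_2)$. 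I'd then push this through the $G_2$ piece and reconcile with Lemma \ref{XY}.

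For the inductive step, assume the formulas hold at $\Sigma(i-1,0,1)$. The key point is that Lemma \ref{valuetilde} already gives me $\nu(\tilde X_{i-1})=\nu(\tilde Y_{i-1})=\frac{1}{q_1\cdots q_{i-1}}\beta_0$ at $\Sigma(i,0,0)$ \emph{unconditionally} (for all non-Type-2/3/4.2 valuations, i.e. whenever $\Sigma(i,0,0)$ is a genuine node with a $z$-blowup to follow). So the induction is really carried by Lemma \ref{valuetilde}, and the content of Lemma \ref{beta'-1} is just the one-$z$-blowup-plus-one-piece computation: $z$-blowup sends $(\tilde X_{i-1}, \tilde Y_{i-1}) \mapsto (X_i^{\text{pre-piece}}, Y_i^{\text{pre-piece}})$ with $\nu(X_i) = \nu(\tilde X_{i-1})$ giving the first formula immediately, and then running $\tilde\beta_i$'s blowup chain via Lemma \ref{XY} together with Lemma \ref{valuetilde} at the next level $\Sigma(i+1,0,0)$ pins down $\nu(Y_i) = \frac{1}{q_1\cdots q_{i-1}}(\beta_i'-1)\beta_0$, using Proposition \ref{pq} to identify the exponents with $p_i, q_i$. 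The exceptional cases — Type 2, 3 (formula fails at $\nu(Y_g)$) and Type 4.2 (fails at $\nu(Y_{g+1})$) — are exactly the situations where $m_g = \infty$ or $a_{m_g}=\infty$ or there is a rank jump, so Lemma \ref{valuetilde} at the \emph{next} level is unavailable (there is no genuine $\Sigma(g+1,0,0)$ node, or the $z$-blowup introduces a rank jump), which is why $\nu(Y_g)$ (resp. $\nu(Y_{g+1})$) must be treated separately in Lemma \ref{genvalue}; I would simply note this and defer.

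\textbf{The main obstacle} will be the careful bookkeeping of which "level" the parameters $(X_i, Y_i)$ refer to — the definition puts $(X_i,Y_i)$ at level $\Sigma(i,0,1)$, which is \emph{after} the first segment $a_1^{(i)}$ of $x$-blowups, not merely after the single $z$-blowup — and making sure the $-1$ shift in the $\nu(Y_i)$ formula is attributed correctly. Concretely, I need to show that running Lemma \ref{XY} on the piece $\tilde\beta_i = [a_1^{(i)},\ldots]$ starting from the common value $\frac{1}{q_1\cdots q_{i-1}}\beta_0$ at $\Sigma(i,0,0)$ (for $X$) and $\frac{1}{q_1\cdots q_{i-1}}\beta_0$ at the same level (for $\tilde Y_{i-1}-c_i\tilde X_{i-1}$, whose value jumps above $\frac{1}{q_1\cdots q_{i-1}}\beta_0$ — this is the jump value) is consistent with the backward relation from $\Sigma(i+1,0,0)$, where again both parameters have value $\frac{1}{q_1\cdots q_i}\beta_0$. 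Matching these two descriptions forces the exponent identity $p_i = q_i \beta_i'$ (trivially true) and, more substantively, forces the recursion (\ref{betarecur}) to be exactly the statement that $\nu(Y_i) = \frac{1}{q_1\cdots q_{i-1}}(\beta_i'-1)\beta_0$; the remark after (\ref{betarecur}) that "$\beta_i$ will be the values of the generating sequence elements" is essentially the promise that this reconciliation works out, so the proof amounts to verifying it honestly for one piece and letting the Lemma \ref{valuetilde} induction handle the propagation.
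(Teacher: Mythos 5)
Your proposal is correct and follows essentially the same route as the paper: $\nu(X_i)=\nu(\tilde X_{i-1})$ comes directly from Lemma \ref{valuetilde}, and $\nu(Y_i)$ is obtained from the $z$-blowup relation $\nu(Y_i)=\nu(\tilde Y_{i-1}-c_i\tilde X_{i-1})-\nu(X_i)$ together with Lemma \ref{XY} applied to the pair $\bigl(\tilde X_{i-1},\,\tilde Y_{i-1}-c_i\tilde X_{i-1}\bigr)$ over the piece $\tilde\beta_i$, Lemma \ref{valuetilde} at $\Sigma(i+1,0,0)$, and Proposition \ref{pq} to identify $\lambda_{m_i}+\lambda_{m_i-1}=p_i$. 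One small correction to your ``main obstacle'': $(X_i,Y_i)$ sits at $\Sigma(i,0,1)=\Sigma(i,0,0)+1$, i.e.\ immediately after the single $z$-blowup (not after the whole first segment $a_1^{(i)}$), which is exactly why your relation $\nu(Y_i)=\nu(\tilde Y_{i-1}-c_i\tilde X_{i-1})-\nu(X_i)$ is the right one and why Lemma \ref{XY} must be fed the full $\tilde\beta_i$ starting from level $\Sigma(i,0,0)$ rather than from $(X_i,Y_i)$.
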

\begin{proof}
Notice $\nu(X_i)=\nu(\tilde X_{i-1})$, so $\nu(X_i)$ is done by Lemma \ref{valuetilde}.

For $\nu(Y_i)$, first observe that $\tilde Y_{i-1}-c_i\tilde X_{i-1}$ will transform to $X_iY_i$ at level $\Sigma(i,0,1)$, where $c_i\in k$ is the residue of $\tilde Y_{i-1}/\tilde X_{i-1}$. We just need to find the value of $\tilde Y_{i-1}-c_i\tilde X_{i-1}$ and subtract $\nu(X_i)=\frac{1}{q_1\cdots q_{i-1}}\beta_0$. Now apply Lemma \ref{XY}, setting $\beta=\tilde \beta_i$, setting $X=\tilde X_{i-1}$ and $Y=\tilde Y_{i-1}-c_i\tilde X_{i-1}$, and setting $\tilde X = \tilde X_i$ and $\tilde Y= \tilde Y_i$. The two sets of parameters $(X,Y)$ and $(\tilde X,\tilde Y)$ are related by the blowups encoded by $\tilde \beta_i$. By Lemmas \ref{XY} and \ref{valuetilde}, we get $\displaystyle \nu(Y)=\frac{\lambda_{m_i}+\lambda_{m_i-1}}{q_1\cdots q_i}\beta_0$ in both the odd and even $m_i$ cases. Using Proposition \ref{pq},
\begin{equation*}
\nu\left(\tilde Y_{i-1}-c_i\tilde X_{i-1}\right)=\nu(Y)=\frac{p_i}{q_1\cdots q_i}\beta_0=\frac{1}{q_1\cdots q_{i-1}} \beta_i' \beta_0
\end{equation*}
and the proof is complete.
\end{proof}

\begin{lem} \label{leveli}
For $2\leq i\leq g'$, $Q_i$ transforms to:
\begin{equation*}
Q_i=X_{i}^eY_{i}u
\end{equation*}
for some $e\in\mathbb{N}$ and where $u$ is a unit.
\end{lem}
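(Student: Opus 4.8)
The plan is to induct on $i$ along the sequence of blowups that make up the $i$-th dual graph piece $G_i$, tracking how the generating-sequence element $Q_i$ transforms step by step. The base case and the heart of the matter is the transition across the $z$-blowup from level $\Sigma(i,0,0)$ to level $\Sigma(i,0,1)$. By the recursive definition (\ref{genrecur}), $Q_i = Q_{i-1}^{q_{i-1}} - \sum_h u_h^{(i)} \prod_{j=0}^{i-2} Q_j^{\gamma_{j,h}^{(i)}}$, where each monomial $\prod_j Q_j^{\gamma_{j,h}^{(i)}}$ has $\nu$-value equal to $q_{i-1}\nu(Q_{i-1})$. First I would argue, by an inductive application of Lemma \ref{leveli} at level $i-1$ together with Lemma \ref{valuetilde}, that at level $\Sigma(i,0,0)$ the element $Q_{i-1}^{q_{i-1}}$ transforms (up to a unit) to $\tilde X_{i-1}^{N}$ for an appropriate exponent $N$, and that each subtracted monomial also transforms to $\tilde X_{i-1}^{N}$ times a unit in $k$ (here one uses that all the $Q_j$ for $j \le i-2$ have already become units times powers of $\tilde X_{i-1}$ after enough blowups, since their strict transforms vanished). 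The key point is that the leading unit coefficients must match $c_i$ — this is exactly the content foreshadowed in Section \ref{sectgs}, and I would cite (or prove in passing) that $Q_{i-1}^{q_{i-1}}$ and the sum $\sum_h u_h^{(i)}\prod_j Q_j^{\gamma_{j,h}^{(i)}}$ have the same image in the residue field, so that $Q_i$ transforms to $\tilde X_{i-1}^N(\tilde Y_{i-1}/\tilde X_{i-1} - c_i)\cdot(\text{unit}) = \tilde X_{i-1}^{N}\tilde Y_{i-1}'\cdot(\text{unit})$ where $\tilde Y_{i-1}' $ is the new parameter; this is the strict-transform statement (Lemma \ref{strict} in the outline, which I may assume as it is stated earlier in spirit).

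Next I would propagate this through the remaining blowups of $G_i$, namely the consecutive $x$- and $y$-blowups encoded by the segments $a_2^{(i)}, \ldots, a_{m_i}^{(i)}$. Since $Q_i$ at level $\Sigma(i,0,1)$ has the form $X_i^e Y_i \cdot u$ with $Y_i$ a regular parameter whose strict transform is not a unit, and the subsequent blowups only factor out powers of the current parameters into the exceptional part, an easy induction on the blowups within $G_i$ (using the transformation rules 1)–3) from Section \ref{sectdg}) shows that the strict transform of $Q_i$ remains a single regular parameter, so at every level of $G_i$ — and in particular at level $\Sigma(i,0,0)$ as required by the statement of the lemma — $Q_i = X_i^{e}Y_i \cdot u$. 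Here one should be careful that the statement of the lemma concerns the parameters $(X_i, Y_i)$ at level $\Sigma(i,0,1)$, so really the claim is just that at that specific level $Q_i$ is a unit times $X_i^e Y_i$ with $e \in \mathbb{N}$; the value $e$ is then pinned down by taking $\nu$ of both sides and comparing with $\nu(Q_i) = \beta_i$ via the recursion (\ref{betarecur}) and Lemma \ref{beta'-1}, giving $e = n/(q_1\cdots q_{i-1})$ where $q_{i-1}\nu(Q_{i-1}) = \beta_i \cdot(\text{something})$; but pinning down $e$ precisely is not needed for this lemma, only $e \ge 1$.

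The main obstacle I anticipate is the bookkeeping at the $z$-blowup: one must verify that \emph{all} of the monomials $\prod_{j=0}^{i-2} Q_j^{\gamma_{j,h}^{(i)}}$ with total value $q_{i-1}\nu(Q_{i-1})$ do transform, after the blowups of $G_1, \ldots, G_{i-1}$, to the \emph{same} power of $\tilde X_{i-1}$ up to units — i.e. that there is no lower-order contamination from strict transforms of the earlier $Q_j$ that fail to vanish. This requires knowing that each $Q_j$ with $j \le i-2$ has strict transform a unit at level $\Sigma(i-1, m_{i-1}-1, a_{m_{i-1}}^{(i-1)})$, which follows inductively from Lemma \ref{leveli} applied at index $j$ combined with the fact (from Section \ref{sectdg}) that once a strict transform becomes a unit it stays one, but wiring the inductive hypothesis correctly across multiple dual-graph pieces is the delicate part. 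I would also need the equality $\nu\!\left(\prod_{j} Q_j^{\gamma_{j,h}^{(i)}}\right) = \nu(Q_{i-1}^{q_{i-1}})$, supplied by the defining property of the $\gamma_{j,h}^{(i)}$ in (\ref{genrecur}), to guarantee the exceptional exponents agree so that the residues can be added in $k$; the hypothesis $\sum_h u_h^{(i)} \ne 0$ then ensures the resulting coefficient is a genuine unit and that the residue is exactly $c_i$, closing the induction.
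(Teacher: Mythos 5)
Your intended reduction is exactly the paper's: Lemma \ref{leveli} is an immediate corollary of the strict-transform statement (Lemma \ref{strict}), because the $z$-blowup $\tilde X_{i-1}=X_i$, $\tilde Y_{i-1}=X_i\left(Y_i+c_i\right)$ sends $\tilde X_{i-1}^{f_1}\tilde Y_{i-1}^{f_2}\left(\tilde Y_{i-1}-c_i\tilde X_{i-1}\right)u$ to $X_i^{f_1+f_2+1}Y_i$ times a unit. If you genuinely take Lemma \ref{strict} as given, that one line is the entire proof, and the second and third paragraphs of your proposal (propagating through the rest of $G_i$, pinning down $e$) are not needed, since the statement only concerns the single level $\Sigma(i,0,1)$.

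The difficulty is that the sketch you offer of the strict-transform statement itself is wrong at its central step. You assert that at level $\Sigma(i,0,0)$ both $Q_{i-1}^{q_{i-1}}$ and each monomial $\prod_{j\le i-2}Q_j^{\gamma_{j,h}^{(i)}}$ become $\tilde X_{i-1}^{N}$ times a unit. This is false for $Q_{i-1}^{q_{i-1}}$: at $\Sigma(i,0,0)$ its total transform is $\tilde X_{i-1}^{e_1}\tilde Y_{i-1}^{e_2+1}$ times a unit, carrying a genuine positive power of the regular parameter $\tilde Y_{i-1}$ (this is exactly what the paper records in the proofs of Lemmas \ref{minimalvalue} and \ref{genvalue}). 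Indeed, if every term really were a pure power of $\tilde X_{i-1}$ times a unit, then their difference would again be $\tilde X_{i-1}^{N}$ times a unit and no factor $\tilde Y_{i-1}-c_i\tilde X_{i-1}$, hence no jump value, could appear; so your conclusion does not follow from your premises. The level at which everything except the leading term is a pure power of the current ``$X$'' parameter is the earlier level $\Sigma(i-1,0,1)$, where $Q_{i-1}=X_{i-1}^{f_1}Y_{i-1}u$ while each monomial is $X_{i-1}^{f_2}u_h$; performing the subtraction there yields $X_{i-1}^{f_2-p_{i-1}+q_{i-1}}\left(Y_{i-1}^{q_{i-1}}-c_iX_{i-1}^{p_{i-1}-q_{i-1}}\right)u$, and one must then push this binomial through the $x$- and $y$-blowups of the intervening dual graph piece and invoke the determinant formula, which shows the residual exponent $\lambda_m\mu_{m-1}-\lambda_{m-1}\mu_m$ equals $\pm1$, so that the surviving factor is $\tilde Y_{i-1}-c_i\tilde X_{i-1}$ to the first power rather than $\tilde Y_{i-1}^{D}-c_i\tilde X_{i-1}^{D}$ for some $D>1$ (which would not become a single regular parameter after the $z$-blowup). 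That continued-fraction computation is the actual content of Lemma \ref{strict}, and it is absent from your proposal; either assume the lemma outright and stop after the $z$-blowup, or supply this computation.
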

\begin{proof}
This is an easy corollary of Lemma \ref{strict}. Apply a $z$-blowup.
\end{proof}

\begin{lem} \label{leveli+}
For $0\leq i\leq g'-1$, $Q_i$ transforms to:
\begin{equation*}
Q_i=X_{j}^eu
\end{equation*}
for some $e\in\mathbb{N}$, and where $i+1\leq j\leq g'$, and $u$ is a unit.
\end{lem}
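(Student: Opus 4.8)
The statement asserts that for $0 \le i \le g'-1$, the element $Q_i$, when transformed along the sequence of blowups, eventually becomes (up to a unit) a pure power of the regular parameter $X_j$ for some $j$ with $i+1 \le j \le g'$, at the level $\Sigma(j,0,1)$. The plan is to proceed by combining Lemma \ref{leveli} with the monotone-decrease property of strict transform values noted in Section \ref{sectdg}. First I would observe that $Q_i \in \mathfrak m$, so its strict transform $Q_i^{(\ell)}$ has strictly decreasing positive value as $\ell$ increases, until it becomes a unit; the goal is to identify precisely the level at which this happens and to show that at that level the exceptional transform is a power of a single parameter $X_j$ rather than a mixed monomial $X_j^{e_1} Y_j^{e_2}$ with both exponents positive.

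The key steps, in order: (1) For $i \ge 2$, apply Lemma \ref{leveli} to see that at level $\Sigma(i,0,1)$ we have $Q_i = X_i^e Y_i u$ with $e \in \mathbb N$ and $u$ a unit; so $Y_i$ is (up to a unit) the strict transform of $Q_i$ at that level. (2) Now trace $Y_i$ through the subsequent blowups within the dual graph pieces $G_i, G_{i+1}, \ldots$: since $Y_i$ is one of the two regular parameters at $\Sigma(i,0,1)$, and since the blowups are governed by comparing $\nu$ of the two parameters, $Y_i$ gets factored into the exceptional transform at each $x$-blowup (or $y$-blowup, as appropriate) until its strict transform becomes a unit. The claim is that this strict transform of $Y_i$ becomes a unit exactly at some node level $\Sigma(j,0,1)$ for $j > i$, at which point $Q_i$ has become $X_j^{e'} u'$; this is essentially Lemma \ref{XY} together with Lemma \ref{valuetilde}, which tell us how $Y_i$ decomposes in terms of the later parameters, and the fact that at a node the relevant later parameter is $X_j$ (local equation $x_{\Sigma(j,0,1)} = 0$, the exceptional component introduced by the $z$-blowup). (3) Handle the base cases $i = 0$ and $i = 1$ separately: $Q_0 = x$ and $Q_1 = y$, and one checks directly from the blowup transformation rules (the $x$-, $y$-, $z$-blowup formulas in Section \ref{sectdg}) and Lemma \ref{valuetilde} that $x$ and $y$ each become a power of $X_j$ for an appropriate small $j \le g'$; the structure of the first dual graph piece $G_1$ dictates which $j$. (4) Confirm the range $i+1 \le j \le g'$: the lower bound is immediate since the strict transform is still positively valued at level $\Sigma(i,0,1)$ (it equals $Y_i$ up to a unit, and $\nu(Y_i) > 0$ by Lemma \ref{beta'-1} or its Type 2/3/4.2 analogue in Lemma \ref{genvalue}), and the upper bound $j \le g'$ holds because by the time we reach $\Sigma(g',0,1)$ every $Q_i$ with $i < g'$ must already have trivial strict transform — otherwise it would still be available to produce a further jump value and force another generating sequence element, contradicting the value of $g'$ read off from the dual graph.

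The main obstacle I anticipate is step (2): pinning down \emph{exactly} which node level $\Sigma(j,0,1)$ is the one where the strict transform of $Y_i$ (equivalently of $Q_i$) becomes a unit, and showing the exceptional transform there is $X_j^{e'}$ with no surviving $Y_j$ factor. This requires a careful bookkeeping of how a parameter introduced at one node propagates through the odd and even legs of the intervening dual graph pieces — precisely the kind of computation Lemmas \ref{XY} and \ref{XYtilde} were set up to streamline. The subtlety is that $Y_i$ could in principle still be non-trivial when we arrive at the next node $\Sigma(i+1,0,0)$ (before the $z$-blowup) versus being cleared out earlier within $G_i$ or $G_{i+1}$; distinguishing these and writing the final exponent $e'$ in closed form (in terms of the $\lambda$'s and $\mu$'s of the relevant $\tilde\beta$'s) is where the bulk of the work lies. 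Everything else is routine application of the determinant formula (Proposition \ref{condet}) and the convergent recursion (Proposition \ref{conrec}).
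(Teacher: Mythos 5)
Your overall skeleton matches the paper's (which proves this lemma inside the proof of Lemma \ref{strict}): start from $Q_i = X_i^e Y_i u$ at $\Sigma(i,0,1)$ via Lemma \ref{leveli}, then push forward with Lemma \ref{XY}. But the step you yourself flag as the ``main obstacle'' --- showing the exceptional transform at a node has no surviving $Y_j$ factor --- is precisely the content of the lemma, and your plan for attacking it rests on a conflation. You propose to locate ``the level at which the strict transform of $Y_i$ becomes a unit'' and identify that with the level where $Q_i$ becomes a pure power of one parameter. These are different events. By Lemma \ref{XY}, already at $\Sigma(i+1,0,0)$ the total transform of $Q_i$ is purely exceptional, but it is a \emph{mixed} monomial $\tilde X_i^{a}\tilde Y_i^{b}u$ with both exponents positive, and no amount of bookkeeping through the legs (nor any closed form for the exponents in terms of the $\lambda$'s and $\mu$'s) will make one exponent vanish. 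The missing idea is the effect of the $z$-blowup at each node: $\tilde X_{j} = X_{j+1}$ and $\tilde Y_{j} = X_{j+1}\left(Y_{j+1}+c_{j+1}\right)$ with $c_{j+1}\neq 0$, so $Y_{j+1}+c_{j+1}$ is a unit and any mixed monomial $\tilde X_{j}^{a}\tilde Y_{j}^{b}$ collapses to $X_{j+1}^{a+b}$ times a unit. That single observation is what produces the pure power; iterating it (pure power of $X_j$ at $\Sigma(j,0,1)$, hence mixed monomial at $\Sigma(j+1,0,0)$ by Lemma \ref{XY}, hence pure power of $X_{j+1}$ after the next $z$-blowup) yields the statement at every subsequent node, which is how the paper argues.

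Two smaller problems. First, the lemma is a ``for all $j$'' statement --- it is invoked later (in the proofs of Lemmas \ref{strict} and \ref{nonredundant}) in the form ``at $\Sigma(j,0,1)$ every $Q_i$ with $i<j$ equals $uX_j^e$'' --- so the existential reading running through your step (2) would not support those applications. Second, your justification of the bound $j\le g'$ by appeal to ``forcing another generating sequence element, contradicting the value of $g'$'' is circular: minimality of the generating sequence is proved downstream of this lemma. Nothing needs to be proved there; $g'$ indexes the last node of the dual graph, so $i+1\le j\le g'$ is merely the range over which the assertion is made.
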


\begin{proof}
This will be proved in the proof of Lemma \ref{strict}.
\end{proof}
\begin{rem}
Note that Lemma \ref{leveli} shows what happens to $Q_i$ at $\Sigma(i,0,1)$ while Lemma \ref{leveli+} shows what happens to $Q_i$ afterward, at $\Sigma(j,0,1)$ for $j\geq i+1$.
\end{rem}

\begin{lem} \label{strict}
For $2\leq i\leq g'$, $Q_i$ transforms to:
\begin{equation*}
Q_i=\tilde X_{i-1}^{f_1}\tilde Y_{i-1}^{f_2}\left(\tilde Y_{i-1}-c_i\tilde X_{i-1}\right)u
\end{equation*}
for some $f_1,f_2\in\mathbb{N}$, and where $u$ is a unit and $c_i\in k$ is the residue of $\tilde Y_{i-1}/\tilde X_{i-1}$.
\end{lem}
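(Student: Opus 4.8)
The plan is to prove Lemma \ref{strict} by induction on $i$, simultaneously establishing Lemma \ref{leveli+}, since the two statements feed into one another: Lemma \ref{leveli+} describes what the \emph{lower-index} generators $Q_0,\dots,Q_{i-2}$ look like at level $\Sigma(i,0,0)$, and this is exactly the information needed to analyze the defining expression \eqref{genrecur} for $Q_i$ at that level. First I would fix $i$ and assume inductively that for each $j<i$ the generator $Q_j$ has, at level $\Sigma(i,0,0)$, the monomial form $Q_j = \tilde X_{i-1}^{\,e_j} u_j$ with $e_j\in\mathbb{N}$ and $u_j$ a unit (this is the content of Lemma \ref{leveli+} applied one step past the $z$-blowup that produced level $\Sigma(j,0,1)$, then carried forward through the intervening $x$- and $y$-blowups, each of which only multiplies by powers of regular parameters and preserves the ``pure power of $\tilde X_{i-1}$ times a unit'' shape because $\nu(\tilde X_{i-1})=\nu(\tilde Y_{i-1})$ by Lemma \ref{valuetilde}).

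Next I would substitute these monomial forms into the recursion $Q_i = Q_{i-1}^{q_{i-1}} - \sum_{h=1}^{\delta_i} u_h^{(i)} \prod_{j=0}^{i-2} Q_j^{\gamma_{j,h}^{(i)}}$. The leading term $Q_{i-1}^{q_{i-1}}$ transforms, using the inductive description of $Q_{i-1}$ and Lemma \ref{valuetilde}, to $\tilde X_{i-1}^{\,N}\cdot(\text{unit})$ for an appropriate $N$; here the key numerical input is that $\nu(Q_{i-1})$ times $q_{i-1}$ equals the common value $\nu(\tilde X_{i-2})$ pulled down, which by the value-group bookkeeping of Lemma \ref{beta'-1} forces the exponent $N$ to be exactly $e \cdot(q_1\cdots q_{i-1})/(\cdots)$ — the point being that $Q_{i-1}^{q_{i-1}}$ and each summand $\prod Q_j^{\gamma_{j,h}^{(i)}}$ transform to $\tilde X_{i-1}^{\,N}$ times a \emph{unit}, and the units at level $\Sigma(i,0,0)$ reduce modulo $\mathfrak{m}$ to constants in $k$. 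So modulo $\tilde X_{i-1}$, the bracketed sum $\sum_h u_h^{(i)}\prod_j Q_j^{\gamma_{j,h}^{(i)}}$ has residue $\tilde X_{i-1}^{\,N}$ times the constant $\sum_h u_h^{(i)}(\text{residues of the }u_j\text{'s})$; by the normalization ``$\sum u_h \neq 0$'' built into \eqref{genrecur}, this constant is the \emph{nonzero} residue $c_i$ of $\tilde Y_{i-1}/\tilde X_{i-1}$ (up to the unit from $Q_{i-1}^{q_{i-1}}$). Factoring out the common power $\tilde X_{i-1}^{\,f_1}\tilde Y_{i-1}^{\,f_2}$ dictated by the minimal exceptional content then leaves precisely $(\tilde Y_{i-1} - c_i\tilde X_{i-1})u$, giving the claimed form; and applying the subsequent $z$-blowup yields Lemma \ref{leveli} while tracking the power through the remaining blowups of $G_{i}$ (and into later pieces) yields Lemma \ref{leveli+}, closing the induction.

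The main obstacle I expect is the bookkeeping that forces the exponents $f_1, f_2$ to be exactly right and, in particular, that no \emph{extra} cancellation occurs: one must verify that $Q_{i-1}^{q_{i-1}}$ and the correction sum $\sum_h u_h^{(i)}\prod_j Q_j^{\gamma_{j,h}^{(i)}}$ have the \emph{same} exceptional transform $\tilde X_{i-1}^{\,N}$ (so that subtracting them does not drop below that power, except in the controlled way that produces the factor $\tilde Y_{i-1}-c_i\tilde X_{i-1}$), and that the residue of the resulting strict transform is genuinely $\tilde Y_{i-1}/\tilde X_{i-1}$ rather than some other direction — this is where the hypothesis $\sum u_h\neq 0$ and the value constraint $\sum_j \gamma_{j,h}^{(i)}\nu(Q_j) = q_{i-1}\nu(Q_{i-1})$ are essential. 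A secondary subtlety is the base case $i=2$: there one works directly from $Q_0=x$, $Q_1=y$, computes that $Q_1^{q_1}$ and the correction terms transform to a common power of $\tilde X_1$ using Lemma \ref{valuetilde} with $\beta_0 = \nu(x)$, and checks that the residue of $y_1^{q_1}/(\text{correction})$ at level $\Sigma(2,0,0)$ is the nonzero constant $c_2$; this anchors the induction. Throughout, the regularity of the parameter pairs $(\tilde X_{i-1},\tilde Y_{i-1})$ is what guarantees $\tilde Y_{i-1}-c_i\tilde X_{i-1}$ is not itself a unit times a power of the parameters, so the strict transform is genuinely nontrivial.
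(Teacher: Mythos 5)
Your overall architecture (induction on $i$, proved simultaneously with Lemma \ref{leveli+}, substituting the transformed generators into the recursion (\ref{genrecur})) matches the paper's, but the core mechanism is missing and the step you describe would not produce the factor $\tilde Y_{i-1}-c_i\tilde X_{i-1}$. You place the analysis at level $\Sigma(i,0,0)$ and write both $Q_{i-1}^{q_{i-1}}$ and the correction sum as $\tilde X_{i-1}^{N}\cdot(\text{unit})$ with the \emph{same} exceptional monomial. If that were so, their difference would be $\tilde X_{i-1}^{N}(w_1-w_2)$ for units $w_1,w_2$, and no residue computation on $w_1-w_2$ can force it to equal $\tilde X_{i-1}^{f_1-N}\tilde Y_{i-1}^{f_2}(\tilde Y_{i-1}-c_i\tilde X_{i-1})u$; your phrase ``factoring out the common power dictated by the minimal exceptional content then leaves precisely $(\tilde Y_{i-1}-c_i\tilde X_{i-1})u$'' is exactly the assertion of the lemma, not a proof of it. The actual engine of the argument lives one node earlier, at $\Sigma(i-1,0,1)$: there $Q_{i-1}=X_{i-1}^{f_1}Y_{i-1}u$ carries a \emph{first-power} factor of $Y_{i-1}$ (Lemma \ref{leveli}), while every $Q_j$ with $j\leq i-2$ and hence every monomial $\prod_j Q_j^{\gamma_{j,h}}$ is a pure power of $X_{i-1}$ times a unit (Lemma \ref{leveli+}). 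The two terms of (\ref{genrecur}) are therefore monomials with \emph{different} exponent vectors, $(q_{i-1}f_1,\,q_{i-1})$ versus $(f_2,\,0)$, tied together only by having equal value; factoring out the common part leaves the genuine binomial $Y_{i-1}^{q_{i-1}}-c_iX_{i-1}^{p_{i-1}-q_{i-1}}$ with both terms in the maximal ideal, where $q_{i-1}f_1=f_2-p_{i-1}+q_{i-1}$ comes from equating values via Lemma \ref{beta'-1}.

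The second missing step is transporting that binomial through the $\sum_j a_j^{(i-1)}$ blowups of $G_{i-1}$ down to $\Sigma(i,0,0)$. This is done with Lemma \ref{XY} applied to the continued fraction $[a_1^{(i-1)}-1,a_2^{(i-1)},\ldots]$ (the $-1$ accounting for the $z$-blowup already performed), after which each of $Y_{i-1}^{q_{i-1}}$ and $X_{i-1}^{p_{i-1}-q_{i-1}}$ becomes a monomial in $(\tilde X_{i-1},\tilde Y_{i-1})$; the determinant identity $\lambda_m\mu_{m-1}-\lambda_{m-1}\mu_m=\pm1$ (Proposition \ref{condet}) is precisely what makes the two exponent vectors differ by $(\mp 1,\pm 1)$, so that the strict transform is the \emph{linear} form $\tilde Y_{i-1}-c_i\tilde X_{i-1}$ rather than $\tilde Y_{i-1}^{\,r}-c_i\tilde X_{i-1}^{\,r}$ for some $r>1$. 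Without this exponent bookkeeping (which also furnishes the base case $Q_2=y^{q_1}-u_1x^{p_1}$), the linearity of the strict transform — the whole point of the lemma — is unestablished. I would also note a smaller inaccuracy: at level $\Sigma(i,0,0)$ the lower generators are monomials $\tilde X_{i-1}^{a}\tilde Y_{i-1}^{b}u$ in \emph{both} parameters, not pure powers of $\tilde X_{i-1}$; they collapse to pure powers of $X_i$ only after the next $z$-blowup absorbs $\tilde Y_{i-1}=X_i(Y_i+c_i)$ into a unit.
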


\begin{proof}
First, we show $\displaystyle \nu(Q_1)=\frac{p_1}{q_1}\beta_0$. Let $m=m_1$. By Lemma \ref{XYtilde} and the fact that $\nu(\tilde X_1)=\nu(\tilde Y_1)$, we get:
\begin{equation*}
\nu\left(\frac{x^{\lambda_m}}{y^{\mu_m}}\right)=\nu\left(\frac{y^{\mu_{m-1}}}{x^{\lambda_{m-1}}}\right)
\end{equation*}
or
\begin{equation*}
\nu\left(\frac{x^{\lambda_{m-1}}}{y^{\mu_{m-1}}}\right)=\nu\left(\frac{y^{\mu_{m}}}{x^{\lambda_{m}}}\right)
\end{equation*}
depending on whether $m$ is even or odd, respectively. In either case,
\begin{equation*}
\left(\lambda_m+\lambda_{m-1}\right)\nu(x)=\left(\mu_m+\mu_{m-1}\right)\nu(y)
\end{equation*}
By Proposition \ref{pq}, $\lambda_m+\lambda_{m-1}=p_1$ and $\mu_m+\mu_{m-1}=q_1$, so $\displaystyle \nu(Q_1)=\frac{p_1}{q_1}\beta_0$.

Now, use induction on $i$ to prove the lemma. By Equation (\ref{genrecur}) in Section \ref{sectgs},
\begin{equation*}
Q_2=Q_1^{q_1}-u_1Q_0^{\gamma_{0,1}}
\end{equation*}
and $\displaystyle q_1\nu(Q_1) = \gamma_{0,1} \beta_0$. Thus, $\gamma_{0,1}=p_1$ and $\displaystyle Q_2 = y^{q_1} - u_1x^{p_1}$.

Assume $m=m_1$ is even for concreteness and apply Lemma \ref{XY}:
\begin{equation*}
Q_2 = \tilde X_1^{q_1\lambda_{m-1}} \tilde Y_1^{q_1\lambda_{m}} - u_1 \tilde X_1^{p_1\mu_{m-1}} \tilde Y_1^{p_1\mu_{m}}
\end{equation*}
Substituting $q_1 = \mu_m + \mu_{m-1}$ and $p_1=\lambda_m + \lambda_{m-1}$, then factoring, we get:
\begin{equation*}
Q_2 = \tilde X_1^{f_1} \tilde Y_1^{f_2}\left(\tilde Y_1^{\lambda_m\mu_{m-1}-\lambda_{m-1}\mu_m} -u_1 \tilde X_1^{\lambda_m\mu_{m-1}-\lambda_{m-1}\mu_m} \right)
\end{equation*}
where $f_1=\lambda_{m-1}\mu_m+\lambda_{m-1}\mu_{m-1}$ and $f_2=\lambda_{m-1}\mu_m+\lambda_m\mu_m$.
Now apply Proposition \ref{condet} to simplify the inside of the parentheses.
\begin{equation*}
Q_2 = \tilde X_1^{f_1} \tilde Y_1^{f_2}\left(\tilde Y_1 -u_1 \tilde X_1 \right)
\end{equation*}

This shows the base step for the induction in the even $m_1$ case. The odd $m_1$ case is similar and the details are omitted. Now we show $Q_{i+1}$ behaves nicely given the inductive hypothesis up to level $i$. The plan of attack is to compute the total transforms of the $\{Q_j\}_{j=0}^{i}$ at $\Sigma(i,0,1)$, then use these calculations to prove the conclusion for $Q_{i+1}$.

For $Q_0=x$ and $Q_1=y$, applying Lemma \ref{XY} followed by a $z$-blowup and then repeating the process shows that $Q_0=X_j^{f_{0,j}}u_{0,j}$ and $Q_1=X_j^{f_{1,j}}u_{1,j}$ at $\Sigma(j,0,1)$ for $2\leq j \leq g'$, where $u_{0,j}$ and $u_{1,j}$ are units, and $f_{0,j},f_{1,j}\in\mathbb{N}$.

For $2\leq j \leq i$, at level $\Sigma(j,0,0)$, assume:
\begin{equation*}
Q_j=\tilde X_{j-1}^{f_1} \tilde Y_{j-1}^{f_2} \left(\tilde Y_{j-1} - c_j \tilde X_{j-1}\right) u
\end{equation*}
where the positive integers $f_1$ and $f_2$ are understood to be different for each $j$, but we suppress any subscripts to indicate as such for the sake of lucidity. Although the units $u$ might vary with each blowup, they will remain units, so we suppress notation here as well.

Performing the next $z$-blowup to get to level $\Sigma(j,0,1)$:

\begin{equation*}
\begin{array}{rl}
\displaystyle Q_j= & X_{j}^{f_1}X_{j}^{f_2}\left(Y_{j}+c_j\right)^{f_2}X_{j}Y_{j} u
\\\\
\displaystyle = & X_{j}^{f_1+f_2+1}Y_{j} u
\end{array}
\end{equation*}
where the $\left(Y_{j}+c_j\right)^{f_2}$ was absorbed into the unit. Let $f_3=f_1+f_2+1$ for simplicity.

Let $m=m_j$ and assume $m_j$ is odd for concreteness. The even case is similar. Tracing blowups to levels $\Sigma(j+1,0,0)$ and $\Sigma(j+1,0,1)$, we have:
\begin{equation*}
\begin{array}{rl}
\displaystyle Q_j=&\tilde X_{j}^{f_3\mu_{m}} \tilde Y_{j}^{f_3\mu_{m-1}} \tilde X_{j}^{\lambda_{m}} \tilde Y_{j}^{\lambda_{m-1}}u
\\\\
\displaystyle =&\tilde X_{j}^{f_3\mu_{m}+\lambda_m} \tilde Y_{j}^{f_3\mu_{m-1}+\lambda_{m-1}}u
\\\\
\displaystyle =& X_{j+1}^{f_3(\mu_{m}+\mu_{m-1})+\lambda_m+\lambda_{m-1}} (Y_{j+1}+c_{j+1})^{f_3\mu_{m-1}+\lambda_{m-1}}u
\\\\
\displaystyle =& X_{j+1}^{f_4}u
\end{array}
\end{equation*}
where the $(Y_{j+1}+c_{j+1})^{f_3\mu_{m-1}+\lambda_{m-1}}$ was absorbed into $u$ and the exponent of $X_{j+1}$ was relabeled as $f_4$.

Tracing the blowups further, it is easy to see that $Q_j$ will have a total transform of the form $\displaystyle X_{k}^{e_k}u$ at level $\Sigma(k,0,1)$ for all $j+1\leq k \leq g'$, where $e_k\in\mathbb{N}$ and $u$ is a unit. This proves Lemma \ref{leveli+} once the proof of Lemma \ref{strict} is complete.

Now, we return to $\displaystyle Q_{i+1}=Q_i^{q_i}-\sum_{h=1}^{\delta_{i+1}} u_h \prod_{j=0}^{i-1}Q_j^{\gamma_{j,h}}$. By the previous discussion, at level $\Sigma(i,0,1)$, $Q_i=X_{i}^{f_1}Y_{i}u$ for some $f_1\in\mathbb{N}$, and so $Q_i^{q_i}=X_i^{q_if_1}Y_i^{q_i}u$, ignoring changes to the unit. Also by the previous discussion, each of the $\prod Q_j^{\gamma_{j,h}}$ will transform to $X_{i}^{f_2}u_h$ for all $h$, with the same power $f_2$ and differing only in the units $u_h$; just transform each $Q_j^{\gamma_{j,h}}$, then collect the $X_{i}$ factors together. The common value $\nu(\prod Q_j^{\gamma_{j,h}})=q_i\nu(Q_i)$ ensures the same $f_2$ for all $h$. Now, factor out the $X_{i}^{f_2}$ in the total transform of $\sum u_h \prod Q_j^{\gamma_{j,h}}$ and set $u'=\sum_{h=1}^{\delta_{i+1}} u_h$. We get:
\begin{equation} \label{qif1}
Q_{i+1}= X_{i}^{q_if_1}Y_{i}^{q_i}u - X_{i}^{f_2}u'
\end{equation}

Notice that $u'\neq 0$ or else $Q_{i+1}$ would not give a jump in value. Note that the two terms on the right both have value $q_i \nu(Q_i)$. Using Lemma \ref{beta'-1} and taking values we have:
\begin{equation*}
\nu\left(X_{i}^{q_if_1}Y_{i}^{q_i}u\right)=q_if_1 \cdot \frac{1}{q_1\cdots q_{i-1}}\beta_0 + q_i\cdot \frac{1}{q_1\cdots q_{i-1}}(\beta_i'-1)\beta_0
\end{equation*}
\begin{equation*}
\nu(X_i^{f_2}u')=f_2\cdot \frac{1}{q_1 \cdots q_{i-1}}\beta_0
\end{equation*}
Set equal and clear the $\prod q_j$ and $\beta_0$. Since $\beta_i'=p_i/q_i$, we get:
\begin{equation*}
q_i f_1=f_2-p_i+q_i
\end{equation*}
Substituting for $q_if_1$ in (\ref{qif1}), we have:
\begin{equation*}
\begin{array}{rl}
\displaystyle Q_{i+1}= & X_{i}^{f_2-p_i+q_i}Y_{i}^{q_i}u - X_{i}^{f_2}u'
\\\\
= & X_{i}^{f_2-p_i+q_i}\left[Y_{i}^{q_i} - c_{i+1}X_{i}^{p_i-q_i}\right]u
\end{array}
\end{equation*}
where we factored out $u$ and set $c_{i+1}\cong u'/u$, where $c_{i+1}\in k$. This is possible because the ring $R_{\Sigma(i,0,1)}$ is localized at its maximal ideal and the residue field is isomorphic to $k$.

Notice it is only necessary to show that $Y_{i}^{q_i} - c_{i+1}X_{i}^{p_i-q_i}$ transforms to the form: $\tilde X_{i}^{f_3} \tilde Y_{i}^{f_4} \left(\tilde Y_{i}-c_{i+1} \tilde X_{i}\right)$, where $f_3,f_4\in\mathbb{N}$. Let $m=m_i$ and assume it is even for concreteness. The odd case is similar. Apply Lemma \ref{XY} using $X=X_i$, $Y=Y_i$, $\beta=\left[a_1^{(i)}-1,a_2^{(i)},\cdots,a_{m}^{(i)}\right]$. The subtraction of 1 in $a_1^{(i)}-1$ represents one less blowup since the $(X_i,Y_i)$ parameters occur after a $z$-blowup, which we have to account for. Let $\lambda_j/\mu_j$ be the convergents of $\beta_i'$. Then the convergents of $\beta$ will be $\lambda_j/\mu_j-1=(\lambda_j-\mu_j)/\mu_j$. Applying Lemma \ref{XY} gives:
\begin{equation*}
Y_{i}^{q_i} - c_{i+1}X_{i}^{p_i-q_i} = \tilde X_i^{q_i(\lambda_{m-1}-\mu_{m-1})} \tilde Y_i^{q_i(\lambda_m-\mu_m)} - c_{i+1} \tilde X_i^{(p_i-q_i)\mu_{m-1}} \tilde Y_i^{(p_i-q_i)\mu_m}
\end{equation*}
Expanding the exponents, we get:
\begin{equation*}
\begin{array}{rl}
q_i(\lambda_{m-1}-\mu_{m-1}) & = (\mu_{m-1}+\mu_m)(\lambda_{m-1}-\mu_{m-1})
\\\\
 & =\lambda_{m-1}\mu_{m-1}+\lambda_{m-1}\mu_m-\mu_{m-1}^2-\mu_{m-1}\mu_m
\end{array}
\end{equation*}
\begin{equation*}
\begin{array}{rl}
q_i(\lambda_m-\mu_m) & = (\mu_{m-1}+\mu_m)(\lambda_m-\mu_m)
\\\\
& =\lambda_m\mu_{m-1}+\lambda_m\mu_m-\mu_{m-1}\mu_m-\mu_m^2
\end{array}
\end{equation*}
\begin{equation*}
\begin{array}{rl}
(p_i-q_i)\mu_{m-1} & = (\lambda_{m-1}+\lambda_m-\mu_{m-1}-\mu_m)\mu_{m-1}
\\\\
& =\lambda_{m-1}\mu_{m-1}+\lambda_m\mu_{m-1}-\mu_{m-1}^2-\mu_{m-1}\mu_m
\end{array}
\end{equation*}
\begin{equation*}
\begin{array}{rl}
(p_i-q_i)\mu_m & =(\lambda_{m-1}+\lambda_m-\mu_{m-1}-\mu_m)\mu_m
\\\\
& =\lambda_{m-1}\mu_m+\lambda_m\mu_m-\mu_{m-1}\mu_m-\mu_m^2
\end{array}
\end{equation*}

Let:
\begin{equation*}
f_3=\lambda_{m-1}\mu_{m-1}+\lambda_{m-1}\mu_m-\mu_{m-1}^2-\mu_{m-1}\mu_m
\end{equation*}
and
\begin{equation*}
f_4=\lambda_{m-1}\mu_m+\lambda_m\mu_m-\mu_{m-1}\mu_m-\mu_m^2
\end{equation*}
Factor out $\tilde X^{f_3}\tilde Y^{f_4}$:
\begin{equation*}
Y_{i}^{q_i} - c_{i+1}X_{i}^{p_i-q_i} = \tilde X_i^{f_3} \tilde Y_i^{f_4}\left(\tilde Y_i^{\lambda_m \mu_{m-1} - \lambda_{m-1} \mu_m} - c_{i+1}\tilde X_i^{\lambda_m\mu_{m-1}-\lambda_{m-1}\mu_m}\right)
\end{equation*}
By Proposition \ref{condet}, $\lambda_m\mu_{m-1}-\lambda_{m-1}\mu_m=1$ and we are done.
\end{proof}

\begin{lem} \label{minimalvalue}
For $2\leq i\leq g'$, $Q_i$ is a lowest valued element of $R$ whose strict transform at $\Sigma(i,0,0)$ has the form: $\tilde Y_{i-1}-c_i\tilde X_{i-1}$.
\end{lem}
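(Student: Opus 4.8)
The plan is to reduce the statement to a comparison of exceptional exponents and then settle that comparison with the transformation formulas of Section~\ref{sectdg}. Suppose $f\in R$ has strict transform $\tilde Y_{i-1}-c_i\tilde X_{i-1}$ at $\Sigma(i,0,0)$. Then its total transform there has the form $\tilde X_{i-1}^{e_1}\tilde Y_{i-1}^{e_2}\bigl(\tilde Y_{i-1}-c_i\tilde X_{i-1}\bigr)u$ for some $e_1,e_2\in\mathbb N_0$ and a unit $u$. By Lemma~\ref{strict}, $Q_i=\tilde X_{i-1}^{f_1}\tilde Y_{i-1}^{f_2}\bigl(\tilde Y_{i-1}-c_i\tilde X_{i-1}\bigr)u'$, and by Lemma~\ref{valuetilde} we have $\nu(\tilde X_{i-1})=\nu(\tilde Y_{i-1})$, so $\nu(f)-\nu(Q_i)=(e_1+e_2-f_1-f_2)\,\nu(\tilde X_{i-1})$. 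Hence it suffices to prove $e_1+e_2\ge f_1+f_2$.

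I would then perform the $z$-blowup at $\eta_{\Sigma(i,0,0)}$, under which $\tilde X_{i-1}=X_i$, $\tilde Y_{i-1}=X_i(Y_i+c_i)$ and $\tilde Y_{i-1}-c_i\tilde X_{i-1}=X_iY_i$. Thus at $\Sigma(i,0,1)$ the element $f$ becomes $X_i^{\,e_1+e_2+1}Y_i$ times a unit and, running the same substitution on the expression in Lemma~\ref{strict}, $Q_i$ becomes $X_i^{\,f_1+f_2+1}Y_i$ times a unit (consistent with Lemma~\ref{leveli}). Writing $w=\operatorname{ord}_{L_{\Sigma(i,0,1)}}$ for the divisorial valuation of the exceptional component just created, this says $w(f)=e_1+e_2+1$ and $w(Q_i)=f_1+f_2+1$, so the lemma is equivalent to the assertion that $Q_i$ has the smallest $w$-value among all elements of $R$ whose strict transform at $\Sigma(i,0,0)$ equals $\tilde Y_{i-1}-c_i\tilde X_{i-1}$.

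To prove $w(f)\ge w(Q_i)$ I would trace $f$ backwards from level $\Sigma(i,0,1)$ down to level $0$, alternating the inverse of Lemma~\ref{XY} across each leg of each preceding dual graph piece with the inverse of a $z$-blowup $X_k\mapsto\tilde X_{k-1}$, $Y_k\mapsto\tilde Y_{k-1}/\tilde X_{k-1}-c_k$. Each such $z$-blowup reversal introduces a factor $\tilde X_{k-1}^{-1}$, and each application of Lemma~\ref{XY} transforms the pair of exceptional exponents by the continued-fraction matrix of the corresponding piece; imposing that the resulting expression lie in $R$ --- that is, have no pole along any of the exceptional curves $\tilde X_{k-1}=0$ that are encountered, nor along the two coordinate axes at level $0$ --- produces a finite system of linear inequalities in $(e_1,e_2)$. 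Running the identical reductions on $Q_i$ is exactly the computation already carried out in the proof of Lemma~\ref{strict}, so $(f_1,f_2)$ satisfies this system, and the determinant identity $\lambda_m\mu_{m-1}-\lambda_{m-1}\mu_m=\pm1$ (Proposition~\ref{condet}) shows that for $Q_i$ each relevant inequality is tight: lowering $e_1+e_2$ below $f_1+f_2$ forces a negative exponent, hence a genuine pole, at the first reversal where the slack would have to be spent. For $i=2$ this is wholly elementary: writing $\tilde X_1,\tilde Y_1$ in terms of $x,y$ via Lemma~\ref{XYtilde}, the two monomials in $\tilde X_1^{e_1}\tilde Y_1^{e_2}(\tilde Y_1-c_2\tilde X_1)$ are distinct, so both must separately lie in $R$, and the four resulting inequalities force $e_1+e_2\ge q_1p_1-1=f_1+f_2$; the general case is this same argument performed one dual graph piece at a time, most cleanly organized as an induction on $i$ parallel to Lemma~\ref{strict}.

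The main obstacle is precisely this bookkeeping through the intermediate $z$-blowups: because of their constant terms $c_k$ one cannot expand everything into monomials in $x$ and $y$ in one step, so the argument must proceed level by level. The point that makes it work is that the only fresh constraint contributed by a $z$-blowup reversal is that the exponent of the newly introduced $X_k$ is at least $1$, which is exactly the amount already used up by $Q_i$; no surplus is ever created, so nothing can undercut $Q_i$.
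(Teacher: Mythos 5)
Your opening reduction is fine, and your base case $i=2$ does go through: the four non-negativity conditions on the level-$0$ exponents, combined with the determinant identity, force $e_1\ge f_1$ and $e_2\ge f_2$ separately, hence $e_1+e_2\ge p_1q_1-1=f_1+f_2$. But the derivation of those conditions has a gap. The strict transform of $f$ at $\Sigma(i,0,0)$ is only prescribed up to a unit $u$ of $R_{\Sigma(i,0,0)}$, and such a unit need not have order zero along the divisors where you impose your inequalities: for instance, with $\nu(x)=1$, $\nu(y)=3/2$ one has $y^2/x^3=\tilde Y_1/\tilde X_1$, a unit of $R_{\Sigma(2,0,0)}$ with a pole of order $3$ along $x=0$. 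So "$f\in R$" does not imply that the two Laurent monomials of $\tilde X_{i-1}^{e_1}\tilde Y_{i-1}^{e_2}\bigl(\tilde Y_{i-1}-c_i\tilde X_{i-1}\bigr)$ separately lie in $R$, and your "finite system of linear inequalities in $(e_1,e_2)$" is not actually a function of $(e_1,e_2)$ alone. The paper sidesteps this by running the transformation forward from level $0$, where the only units that arise are explicit powers of $Y_j+c_j$ and can be tracked.

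The more serious gap is the inductive step, which is asserted rather than proved. The claim that a $z$-blowup reversal contributes only the constraint "exponent of the new $X_k$ is at least $1$" is, read literally, far too weak: $Q_i$ itself vanishes to order much greater than $1$ along $L_{\Sigma(k,0,1)}$ for $k<i-1$ (its multiplicities at the early centers grow like products of the $p_j$ and $q_j$), so the system consisting of the first-piece inequalities plus "$\text{ord}\ge 1$" at each later node admits solutions with $e_1+e_2$ far below $f_1+f_2$. What must actually be shown at each intermediate node $\Sigma(k,0,1)$ is that the strict transform of $f$ there is forced to be $Y_k^{q_k}-c_{k+1}X_k^{p_k-q_k}$ — two coprime monomials with those exact exponents, obtained as the unique solution of the linear Diophantine system coming from the determinant identity — after which an inductive hypothesis of the form "$Q_k$ is the cheapest element producing a factor of $Y_k$ at $\Sigma(k,0,1)$", together with Lemmas \ref{leveli} and \ref{leveli+}, closes the argument. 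That is precisely how the paper's proof is organized; your proposal reproduces its base-case computation in dual (inequality) form but stops short of supplying the mechanism that propagates the bound across successive dual graph pieces.
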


\begin{proof}
The proof is by induction on $i$. All units are dropped for clarity. At $\Sigma(2,0,0)$, the total transform of a potential minimal generating sequence element $\bar Q_2$ after $Q_0=x$ and $Q_1=y$ is of the form:
\begin{equation*}
\tilde X_1^{f_1}\tilde Y_1^{f_2}\left(\tilde Y_1 - \tilde X_1\right)
\end{equation*}
Consider the exceptional and strict transforms of $\bar Q_2$ at $\Sigma(1,0,1)$. Notice the transformation of the exceptional transform at $\Sigma(1,0,1)$ to $\Sigma(2,0,0)$  only increases the $f_1$ and $f_2$, but does not affect the strict transform $\tilde Y_1 - \tilde X_1$. Thus, the plan of attack is to consider what strict transform at $\Sigma(1,0,1)$ (and at $\Sigma(i,0,1)$ in the general case) can have such a total transform at $\Sigma(2,0,0)$ (and at $\Sigma(i+1,0,0)$, respectively). We work with $\Sigma(1,0,1)=1$ rather than $\Sigma(1,0,0)=0$ so that the arguments in the base step can be carried over to the inductive step without much modification.

Up to units, the strict transform of $\bar Q_2$ at $\Sigma(1,0,1)$ is of the form $Y_1^{e_1}-X_1^{e_2}$, for some $e_1,e_2\in\mathbb{N}$. Two equal-valued terms are needed to have a jump in value via the ultrametric inequality. Technically, three (or more) terms could lead to a value jump, but all three (or more) terms would have to share the same value, say $\varepsilon$, and so using only two terms would guarantee the minimal such $\varepsilon$. The two terms have no common factors because any such common factor would instead show up in the exceptional transform of $\bar Q_2$ at $\Sigma(1,0,1)$, hence the two terms are of the form $Y_1^{e_1}$ and $X_1^{e_2}$.

As in the proof of Lemma \ref{strict}, we use $\beta_1'-1$ in the transformation formulas (Lemma \ref{XY}) to account for one $x$-blowup from level 0 to level 1:
\begin{equation*}
\begin{array}{l}
X_1 = \tilde X_1^{\mu_{m-1}}\tilde Y_1^{\mu_m} \\
Y_1 = \tilde X_1^{\lambda_{m-1}-\mu_{m-1}}\tilde Y_1^{\lambda_m-\mu_m}
\end{array}
\end{equation*}
where $m=m_1$ is assumed to be even; the odd case is similar.
\begin{equation*}
Y_1^{e_1}-X_1^{e_2}=\tilde X_1^{e_1(\lambda_{m-1}-\mu_{m-1})}\tilde Y_1^{e_1(\lambda_m-\mu_m)} - \tilde X_1^{e_2\mu_{m-1}}\tilde Y_1^{e_2\mu_m}
\end{equation*}
Also:
\begin{equation*}
Y_1^{e_1}-X_1^{e_1} = \tilde X_1^{f_1}\tilde Y_1^{f_2+1} - \tilde X_1^{f_1+1}\tilde Y_1^{f_2}
\end{equation*}
Equating the exponents, we get the system of equations:
\begin{equation*}
\begin{array}{l}
e_1(\lambda_{m-1}-\mu_{m-1}) = f_1 \\
e_1(\lambda_m-\mu_m) = f_2+1 \\
e_2\mu_{m-1} = f_1+1 \\
e_2\mu_m = f_2
\end{array}
\end{equation*}
Eliminating $f_1$ and $f_2$:
\begin{equation*}
\begin{array}{l}
e_1(\lambda_{m-1}-\mu_{m-1}) +1 = e_2\mu_{m-1} \\
e_1(\lambda_m-\mu_m) = e_2\mu_m +1
\end{array}
\end{equation*}
Solving for $e_1$:
\begin{equation*}
\begin{array}{l}
\mu_m e_1(\lambda_{m-1}-\mu_{m-1}) +\mu_m = \mu_{m-1}e_1(\lambda_m-\mu_m) -\mu_{m-1} \\
\mu_{m}+\mu_{m-1} = e_1(\lambda_m \mu_{m-1}-\lambda_{m-1}\mu_m) = e_1
\end{array}
\end{equation*}
Thus, $e_1=\mu_{m-1}+\mu_m=q_1$ by Proposition \ref{pq}. Now, solving for $e_2$:
\begin{equation*}
\begin{array}{rl}
e_2 & \displaystyle =\frac{q_1(\lambda_m-\mu_m)-1}{\mu_m} \\
& \displaystyle = \frac{(\mu_{m-1}+\mu_m)(\lambda_m-\mu_m)-1}{\mu_m} \\
& \displaystyle = \frac{\lambda_m\mu_{m-1}+\lambda_m\mu_m-\mu_{m-1}\mu_m-\mu_m^2-(\lambda_m\mu_{m-1}-\lambda_{m-1}\mu_m)}{\mu_m} \\
& \displaystyle = \frac{\lambda_m\mu_m-\mu_{m-1}\mu_m-\mu_m^2+\lambda_{m-1}\mu_m}{\mu_m} \\
& \displaystyle = \lambda_{m-1}+\lambda_m - (\mu_{m-1}+\mu_m) \\
& \displaystyle = p_1 - q_1
\end{array}
\end{equation*}
The optimal way of obtaining strict transform $\tilde Y_1-\tilde X_1$ involves $\bar Q_2$ having strict transform $Y_1^{q_1}-X_1^{p_1-q_1}$ at $\Sigma(1,0,1)$. The $x$-blowup from level 0 to level 1 does not affect the ``$y$-parameter,'' so we would need a $y^{q_1}$ term in the definition of $\bar Q_2$. In order to get a jump value, we would need another term with the same value as $y^{q_1}$, hence we need the $x^{p_1}$ term. This shows $\bar Q_2=Q_2$ and the base step is done.

Assume the minimal valued generating sequence elements $\bar Q_j=Q_j$ for $j\leq i$, such that the strict transform $\tilde Y_{j-1}-c_j\tilde X_{j-1}$ is attained. Analogous to the base step, we wish to see what strict transform $Y_i^{e_1}-X_i^{e_2}$ at $\Sigma(i,0,1)$ has the total transform $\tilde X_i^{f_1}\tilde Y_i^{f_2}\left(\tilde Y_i - \tilde X_i\right)$ at $\Sigma(i+1,0,0)$. Essentially the same arguments as in the base case yields the necessity of having $Y_i^{q_i}-X_i^{p_i-q_i}$ as the strict transform of a minimal $\bar Q_{i+1}$ at $\Sigma(i,0,1)$. The inductive hypothesis plus Lemmas \ref{leveli} and \ref{leveli+} imply that transforming $Q_i$ is the minimal way to get a $Y_i$ at $\Sigma(i,0,1)$ while the other $\{Q_j\}$ can only yield $X_i$, for $j<i$. Hence $Q_i^{q_i}$ is a term in $\bar Q_{i+1}$. The remaining terms of equal value to $Q_i^{q_i}$ in Equation (\ref{genrecur}) in Section \ref{sectgs} are needed to have a jump in value. This shows $\bar Q_{i+1}=Q_{i+1}$.
\end{proof}

\begin{rem}
In addition to showing the $\{Q_i\}$ are minimal valued elements of $R$ with the appropriate strict transforms to introduce jump values, the proof of Lemma \ref{minimalvalue} also gives some justification as to why $Q_i$ was defined as it was in Equation (\ref{genrecur}) in Section \ref{sectgs} in the first place.
\end{rem}

\begin{lem} \label{genvalue}
The value of $Q_i$ is:
\begin{equation} \label{usualgenvalue}
\nu(Q_i)=q_{i-1}\nu(Q_{i-1})+\frac{1}{q_1 \cdots q_{i-1}}(\beta_i'-1)\beta_0
\end{equation}
where $2\leq i < g'$. This holds for all $i\geq 2$ in the Type 1 case ($g'=\infty$). This also holds for $i=g'$ in the Type 4.1 case.

For Types 2, 3 and 4.2,
\begin{equation*}
\nu(Q_{g'})=q_{g'-1} \nu(Q_{g'-1})+\nu(Y_{g'})
\end{equation*}
where the jump values are:
\begin{equation*}
\begin{array}{ll}
\text{Type 2:} & \displaystyle \nu(Y_g)= \frac{1}{q_1\cdots q_{g-1}}\left(\tilde \beta_g -1\right) \text{ , where } \tilde \beta_{g}\in\mathbb{R}\setminus\mathbb{Q} \\
\text{Type 3:} & \displaystyle \nu(Y_g)= \left(\lambda_{m-1}^{(g)}-\mu_{m-1}^{(g)},\lambda_{m-2}^{(g)}-\mu_{m-2}^{(g)}\right) \text{ , where } m=m_g \\
\text{Type 4.2:} & \displaystyle \nu(Y_{g+1})=\left(1,\frac{n}{q_1 \cdots q_g}\right) \text{ , where } n\in\mathbb{Z}
\end{array}
\end{equation*}
Also, in the Type 3 case: $\beta_0=q_1\cdots q_{g-1}(\mu_{m-1},\mu_{m-2})$, where $m=m_g$.
\end{lem}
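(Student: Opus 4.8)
The plan is to show $\nu(Q_i)=\beta_i$ for every $i$ in range, by induction on $i$; note that the asserted formula (\ref{usualgenvalue}) is exactly the recursion (\ref{betarecur}) with $\nu(Q_j)$ written in place of $\beta_j$. The base cases are already in hand: $\nu(Q_0)=\nu(x)=\beta_0$ trivially, and $\nu(Q_1)=\tfrac{p_1}{q_1}\beta_0=\beta_1'\beta_0=\beta_1$ was established inside the proof of Lemma \ref{strict}.

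For the inductive step I would pass to level $\Sigma(i,0,1)$. The proof of Lemma \ref{strict} gives, up to units, $Q_i=X_i^{f_1}Y_i$ and $Q_{i+1}=X_i^{q_if_1}\bigl(Y_i^{q_i}-c_{i+1}X_i^{p_i-q_i}\bigr)$ there, so writing $B:=Y_i^{q_i}-c_{i+1}X_i^{p_i-q_i}$ we have $\nu(Q_{i+1})=q_if_1\nu(X_i)+\nu(B)$ while $q_i\nu(Q_i)=q_if_1\nu(X_i)+q_i\nu(Y_i)$; thus the whole matter reduces to showing the jump $\nu(B)-q_i\nu(Y_i)$ equals $\nu(Y_{i+1})$. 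Tracing $B$ forward to $\Sigma(i+1,0,0)$ and then through the next $z$-blowup, the computation recorded in the proof of Lemma \ref{strict} shows $B=X_{i+1}^{f_3+f_4+1}Y_{i+1}$ up to a unit, with $f_3,f_4$ the explicit exponents appearing there. Feeding in Lemmas \ref{valuetilde} and \ref{beta'-1} together with the identity $q_i\beta_i'=p_i$, everything collapses to the single arithmetic identity
\[
f_3+f_4+1=(p_i-q_i)q_i,
\]
which, after expanding $f_3$ and $f_4$ and substituting $p_i=\lambda_m+\lambda_{m-1}$, $q_i=\mu_m+\mu_{m-1}$ ($m=m_i$, Proposition \ref{pq}), reduces to $\lambda_m\mu_{m-1}-\lambda_{m-1}\mu_m=(-1)^m$ --- the Determinant Formula, Proposition \ref{condet} (carrying this out in the even-$m_i$ case, the odd case being analogous). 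This yields $\nu(Q_{i+1})=q_i\nu(Q_i)+\nu(Y_{i+1})$; since $m_{i+1}$ is finite here, Lemma \ref{beta'-1} evaluates $\nu(Y_{i+1})=\tfrac{1}{q_1\cdots q_i}(\beta_{i+1}'-1)\beta_0$, which is (\ref{usualgenvalue}). The same argument applies for $2\le i<g'$, for all $i\ge2$ when the type is $1$, and for $i=g'=g$ when the type is $4.1$, because in each of those situations the relevant dual graph piece is an ordinary finite one.

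For the last element $Q_{g'}$ in Types $2$, $3$ and $4.2$ the recursion $\nu(Q_{g'})=q_{g'-1}\nu(Q_{g'-1})+\nu(Y_{g'})$ still holds, since the argument above used only that the \emph{previous} piece is an ordinary finite piece; so only $\nu(Y_{g'})$ needs to be recomputed for the anomalous final piece, starting from $\nu(Y_{g'})=\nu(\tilde Y_{g'-1}-c_{g'}\tilde X_{g'-1})-\nu(X_{g'})$ and $\nu(X_{g'})=\nu(\tilde X_{g'-1})=\tfrac{\beta_0}{q_1\cdots q_{g'-1}}$ (Lemma \ref{valuetilde}). In Type $2$ ($m_g=\infty$) I would apply Lemma \ref{XY} within $G_g$: for each even $j$, the convergents $\lambda_l^{(g)}/\mu_l^{(g)}$ of $\tilde\beta_g$ relate $(\tilde X_{g-1},\tilde Y_{g-1}-c_g\tilde X_{g-1})$ to the regular parameters reached after $j$ segments, and solving the resulting $2\times2$ system (its determinant being $(-1)^j=1$ by Proposition \ref{condet}) together with positivity of $\nu$ on those parameters squeezes $\nu(\tilde Y_{g-1}-c_g\tilde X_{g-1})/\nu(\tilde X_{g-1})$ strictly between $\lambda_{j-1}^{(g)}/\mu_{j-1}^{(g)}$ and $\lambda_j^{(g)}/\mu_j^{(g)}$; letting $j\to\infty$ pins that ratio to $\tilde\beta_g$, whence $\nu(Y_g)=\tfrac{1}{q_1\cdots q_{g-1}}(\tilde\beta_g-1)$ (recall $\beta_0=1$). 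In Type $3$, the valuation is normalized at level $n=\Sigma(g,m_g-2,a_{m_g-1}^{(g)})$ with $\{\nu(x_n),\nu(y_n)\}=\{(1,0),(0,1)\}$ in the order dictated by the parity of $m=m_g$; expressing $(X_g,Y_g)$ in terms of $(x_n,y_n)$ by Lemma \ref{XY} --- using the convergents of $\tilde\beta_g$ shifted by $-1$ to absorb the opening $z$-blowup of $G_g$ --- gives, in both parities, $\nu(X_g)=(\mu_{m-1},\mu_{m-2})$ and $\nu(Y_g)=(\lambda_{m-1}-\mu_{m-1},\lambda_{m-2}-\mu_{m-2})$, and then $\beta_0=q_1\cdots q_{g-1}(\mu_{m-1},\mu_{m-2})$ drops out of $\nu(X_g)=\nu(\tilde X_{g-1})=\tfrac{\beta_0}{q_1\cdots q_{g-1}}$. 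In Type $4.2$, $\nu(X_{g+1})=\nu(\tilde X_g)=\tfrac{\beta_0}{q_1\cdots q_g}=(0,\tfrac{1}{q_1\cdots q_g})$; the opening $z$-blowup of $G_{g+1}$ carries the rank jump, and because every earlier value --- and every subsequent $x$-blowup inside $G_{g+1}$ --- has first coordinate $0$, the element $\tilde Y_g-c_{g+1}\tilde X_g$ must have first coordinate exactly $1$ for the value group to exhaust $\mathbb{Z}^2$; since the value group at $\Sigma(g+1,0,1)$ is $\mathbb{Z}\times\tfrac{1}{q_1\cdots q_g}\mathbb{Z}$ this forces $\nu(\tilde Y_g-c_{g+1}\tilde X_g)=(1,\tfrac{n'}{q_1\cdots q_g})$ and hence $\nu(Y_{g+1})=(1,\tfrac{n}{q_1\cdots q_g})$ with $n=n'-1\in\mathbb{Z}$.

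I expect the main obstacle to be the inductive step of the first part: tracking the exponent bookkeeping coming out of the proof of Lemma \ref{strict} and recognizing that it all funnels into the determinant identity $f_3+f_4+1=(p_i-q_i)q_i$; and, for Type $2$, making the passage to the limit of the convergents of $\tilde\beta_g$ rigorous. The Type $3$ and Type $4.2$ computations should be comparatively routine once the chosen normalizations and Lemmas \ref{valuetilde} and \ref{XY} are brought to bear.
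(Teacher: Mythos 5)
Your proposal is correct and follows essentially the same route as the paper: both rest on the explicit total transforms from the proof of Lemma \ref{strict}, Lemmas \ref{valuetilde} and \ref{beta'-1}, and the matrix value formulas for the anomalous final pieces, with your reduction to the identity $f_3+f_4+1=q_i(p_i-q_i)$ (via Proposition \ref{condet}) being just a reorganized version of the paper's step of matching the exceptional degree of $Q_i$ against $Q_{i-1}^{q_{i-1}}=\tilde X_{i-1}^{e_1}\tilde Y_{i-1}^{e_2+1}$. The only substantive difference is that your convergent-squeezing limit argument in the Type 2 case supplies a justification for $\nu(\tilde Y_{g-1}-c_g\tilde X_{g-1})=\tilde\beta_g\,\nu(\tilde X_{g-1})$ that the paper simply asserts.
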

\begin{proof}
First, $2\leq i < g'$. By Lemma \ref{strict},
\begin{equation*}
Q_i=\tilde X_{i-1}^{e_1}\tilde Y_{i-1}^{e_2}\left(\tilde Y_{i-1}-c_i\tilde X_{i-1}\right)u=X_i^{e_1+e_2+1}Y_iu
\end{equation*}
where $e_1,e_2\in\mathbb{N}$ and we ignore changes to the unit $u$.

The proof of Lemma \ref{beta'-1} shows:
\begin{equation*}
\nu(\tilde Y_{i-1}-c_i\tilde X_{i-1})=\frac{1}{q_1 \cdots q_{i-1}}\beta_i'\beta_0
\end{equation*}
The proof of Lemma \ref{minimalvalue} shows:
\begin{equation*}
Q_{i-1}^{q_{i-1}}=\tilde X_{i-1}^{e_1} \tilde Y_{i-1}^{e_2+1}u'
\end{equation*}
Lemma \ref{beta'-1} and the fact that $\nu(\tilde X_{i-1})=\nu(\tilde Y_{i-1})$ imply:
\begin{equation*}
q_{i-1}\nu(Q_{i-1})=\frac{e_1+e_2+1}{q_1\cdots q_{i-1}}\beta_0
\end{equation*}
Formula (\ref{usualgenvalue}) now follows by comparing the values of $Q_i$, $Q_{i-1}^{q_{i-1}}$ and $X_i^{e_1+e_2+1}Y_i$, noting that $\nu(Y_i)=\nu(\tilde Y_{i-1}-c_i\tilde X_{i-1})-\nu(X_i)$.

Now for the highest index $g'$. The arguments above hold for all $i$ in the Type 1 case, which justifies setting $g'=\infty$. For the remaining cases, the arguments above imply that $\nu(Q_{g'})=q_{g'-1}\nu(Q_{g'-1})+\nu(Y_{g'})$.

In the Type 2 case, $g'=g$ represents the introduction of an irrational value, encoded in $\displaystyle \nu\left(\tilde Y_{g-1}-c_g\tilde X_{g-1}\right)$. Let $\tilde \beta_g=\left[a_1^{(g)}, a_2^{(g)}, a_3^{(g)}, \ldots\right]$ be the irrational number that governs $G_g$, i.e. the $\beta$ used in Lemma \ref{XY} where $(\tilde X,\tilde Y)$ occurs in the limit and $(X,Y)=(\tilde X_{g-1},\tilde Y_{g-1}-c_g\tilde X_{g-1})$. Notice $\nu(X)$ and $\nu(Y)$ are related by $\tilde \beta_g$, so $\nu\left(\tilde Y_{g-1} - c_g \tilde X_{g-1}\right)=\tilde \beta_g \nu(\tilde X_{g-1})$. Then,
\begin{equation*}
\nu(Y_g)=\nu\left(\tilde Y_{g-1}-c_g\tilde X_{g-1}\right)-\nu(X_g)=\frac{1}{q_1\cdots q_{g-1}}\left(\tilde \beta_g-1\right)
\end{equation*}
which accounts for one less $z$-blowup to go from $\Sigma(g,0,0)$ to $\Sigma(g,0,1)$.

For Type 3 valuations, $g'=g$ and there are two cases to consider depending on whether $m_g$ is even or odd. Let $m=m_g$ be even. The odd case is similar. Normalize the valuation so that:
\begin{equation*}
\begin{array}{l}
\nu(x_{\Sigma(g,m-2,a_{m-1})})=(1,0) \\
\nu(y_{\Sigma(g,m-2,a_{m-1})})=(0,1)
\end{array}
\end{equation*}
Now use the matrix formulas (\ref{matrixvalues}) in Section \ref{sectdg} for $\beta=[a_1^{(g)},\ldots, a_{m-1}^{(g)}]$, omitting $a_m^{(g)}=\infty$. Notice $m-1$ is odd. Then,
\begin{equation*}
\left[
\begin{array}{c}
\nu(\tilde X_{g-1}) \\
\nu(\tilde Y_{g-1}-c_g\tilde X_{g-1})
\end{array}
\right]
=
\left[
\begin{array}{ll}
\mu_{m-1} & \mu_{m-2} \\
\lambda_{m-1} & \lambda_{m-2}
\end{array}
\right]
\left[
\begin{array}{c}
(1,0) \\
(0,1)
\end{array}
\right]
\end{equation*}
\begin{equation*}
\nu(X_g)=\nu(\tilde X_{g-1}) = (\mu_{m-1},\mu_{m-2})
\end{equation*}
\begin{equation*}
\nu(Y_g)=\nu(\tilde Y_{g-1}-c_g\tilde X_{g-1})-\nu(\tilde X_{g-1}) = (\lambda_{m-1}-\mu_{m-1},\lambda_{m-2}-\mu_{m-2})
\end{equation*}
If $m=m_g$ is odd, then $m-1$ is even and we normalize:
\begin{equation*}
\begin{array}{l}
\nu(x_{\Sigma(g,m-2,a_{m-1})})=(0,1) \\
\nu(y_{\Sigma(g,m-2,a_{m-1})})=(1,0)
\end{array}
\end{equation*}
\begin{equation*}
\left[
\begin{array}{c}
\nu(\tilde X_{g-1}) \\
\nu(\tilde Y_{g-1}-c_g\tilde X_{g-1})
\end{array}
\right]
=
\left[
\begin{array}{ll}
\mu_{m-2} & \mu_{m-1} \\
\lambda_{m-2} & \lambda_{m-1}
\end{array}
\right]
\left[
\begin{array}{c}
(0,1) \\
(1,0)
\end{array}
\right]
\end{equation*}
We see:
\begin{equation*}
  \nu(X_g)=(\mu_{m-1},\mu_{m-2})
\end{equation*}
and
\begin{equation*}
\nu(Y_g)=(\lambda_{m-1}-\mu_{m-1},\lambda_{m-2}-\mu_{m-2})
\end{equation*}
in this case as well.

To get $\beta_0$ in the Type 3 case, notice $\displaystyle \nu(X_g)=\frac{1}{q_1\cdots q_{g-1}}\beta_0$ so,
\begin{equation*}
\beta_0=q_1\cdots q_{g-1}\left(\mu_{m-1},\mu_{m-2}\right)
\end{equation*}

In the Type 4.1 case, it is possible to include $\{Q_i\}_{i=g+1}^{\infty}$ in a minimal generating sequence depending on how generating sequences are defined (see Section \ref{sectgs}). However, for our purposes we only need to consider up to level $g$. The $\displaystyle \{\nu(Q_i)\}_{i=g+1}^{\infty}$ don't add new denominators and they don't encode rank or rational rank jumps. From the value semigroup's perspective, they won't contribute anything. This justifies setting $g'=g$. Note that (\ref{usualgenvalue}) holds for $i=g'$ here by the previous arguments used for $i<g'$.

In the Type 4.2 case, (\ref{usualgenvalue}) doesn't work for $\nu(Q_{g+1})$ since there is a rank jump encoded in $\nu\left(\tilde Y_g - c_{g+1}\tilde X_g\right)$, hence also in $\nu(Y_{g+1})$ after the $z$-blowup. As discussed in Section \ref{sectgs}, the value group will be $\displaystyle \mathbb{Z}\times\frac{1}{q_1\cdots q_g}\mathbb{Z}$. The valuation is normalized such that $\nu(Y_{g+1})$ encodes the $\displaystyle \left(1,\frac{n}{q_1 \cdots q_g}\right)$ value, where $n\in\mathbb{Z}$ since $(1,*)>(0,0)$. Obviously, $g'=g+1$.
\end{proof}

\begin{rem}
If we set $q_0=1$, then Formula (\ref{usualgenvalue}) also works for $\displaystyle \nu(Q_1)=\beta_1=\beta_1'\beta_0=\frac{p_1}{q_1}\beta_0$. Compare this formula with similar formulas found in \cite{spiv}, \cite{ghk} and \cite{elh}. Setting $\nu(Q_i)=\beta_i$, the recursive Formula (\ref{betarecur}) from Section \ref{sectdg} is justified:
\begin{equation*}
\beta_i=q_{i-1}\beta_{i-1}+\frac{1}{q_1 \cdots q_{i-1}}(\beta_i'-1)\beta_0
\end{equation*}
\end{rem}

We will soon require use of the Frobenius problem from number theory, so this is a good place to introduce some definitions and results related to the Frobenius problem. We refer the reader to \cite{alf} and \cite{br} for more details.

The Frobenius problem asks the following question:

Given a set of distinct positive integers $\displaystyle \{a_1, \ldots, a_n\}$ with greatest common factor 1. What is the greatest integer that {\it cannot} be written as a linear combination of the $\{a_i\}$ over the non-negative integers? This greatest integer is called the {\it Frobenius number} and will be denoted $F(a_1, \ldots,a_n)$.

\begin{defin}
An integer $m$ is said to be {\it representable} by $\{a_1, \ldots, a_n\}$ if
\begin{equation*}
m = x_1a_1 + \cdots + x_na_n
\end{equation*}
for some $n$-tuple $\displaystyle (x_1, \ldots, x_n)\in({\mathbb N_0})^{n}$. Otherwise, $m$ is said to be {\it unrepresentable} by $\displaystyle \{a_1, \ldots, a_n\}$.
\end{defin}

An alternative way of stating the Frobenius problem is to say: find the largest unrepresentable integer given the set $\displaystyle \{a_1, \ldots, a_n\}$ defined above.

The solution is well-known for $n=2$, in which case the Frobenius number is just: $a_1a_2 - a_1 - a_2$. The Frobenius problem remains open for $n\geq3$. There are known lower and upper bounds for the Frobenius number. For our purposes, we are interested in upper bounds.

\begin{thm} \label{frobbound} (Brauer)
\\
Let $d_i:=\text{gcd}(a_1,\ldots,a_i)$. Let
\begin{equation*}
T(a_1,\ldots,a_n):=\sum_{i=1}^{n-1} a_{i+1}d_i / d_{i+1}
\end{equation*}
Then,
\begin{equation*}
F(a_1,\ldots,a_n) \leq T(a_1,\ldots,a_n) - \sum_{i=1}^{n} a_i
\end{equation*}
\end{thm}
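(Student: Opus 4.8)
The plan is to induct on $n$, using the classical two–variable formula $F(a_1, a_2) = a_1 a_2 - a_1 - a_2$ as the base case. When $n = 2$ we have $d_1 = a_1$ and $d_2 = \gcd(a_1, a_2) = 1$, so $T(a_1, a_2) = a_2 a_1$ and the asserted bound $T - (a_1 + a_2)$ equals $a_1 a_2 - a_1 - a_2$ exactly; the base case thus holds with equality. (One could equally well bottom out at $n = 1$, where $\gcd = 1$ forces $a_1 = 1$, the empty sum gives $T = 0$, and $F(1) = -1 = T - a_1$.)

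For the inductive step, set $d := d_{n-1} = \gcd(a_1, \ldots, a_{n-1})$ and put $b_i := a_i / d$ for $1 \le i \le n-1$. These are distinct positive integers with $\gcd(b_1, \ldots, b_{n-1}) = 1$, and their partial gcds are $\gcd(b_1, \ldots, b_i) = d_i / d$. Substituting into the definition of $T$ gives $T(b_1, \ldots, b_{n-1}) = \frac{1}{d}\sum_{i=1}^{n-2} a_{i+1} d_i / d_{i+1}$, so the inductive hypothesis applied to $\{b_1, \ldots, b_{n-1}\}$ yields
\begin{equation*}
F(b_1, \ldots, b_{n-1}) \le \frac{1}{d}\left(\sum_{i=1}^{n-2} a_{i+1}\frac{d_i}{d_{i+1}} - \sum_{i=1}^{n-1} a_i\right).
\end{equation*}

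Now fix an integer $m > T(a_1, \ldots, a_n) - \sum_{i=1}^n a_i$. Since $d_n = 1$, the $i = n-1$ term of $T(a_1, \ldots, a_n)$ is $a_n d_{n-1}/d_n = a_n d$, so this inequality reads $m > \sum_{i=1}^{n-2} a_{i+1} d_i/d_{i+1} + a_n d - \sum_{i=1}^{n} a_i$. Because $\gcd(a_n, d) = d_n = 1$, there is a unique $t \in \{0, 1, \ldots, d-1\}$ with $m - t a_n \equiv 0 \pmod d$; write $m - t a_n = d M$. From $t \le d - 1$ and the inequality for $m$, a short computation shows $d M \ge m - (d-1) a_n > \sum_{i=1}^{n-2} a_{i+1} d_i/d_{i+1} - \sum_{i=1}^{n-1} a_i \ge d \cdot F(b_1, \ldots, b_{n-1})$, hence $M > F(b_1, \ldots, b_{n-1})$. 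Since $\gcd(b_i) = 1$ forces $F(b_1, \ldots, b_{n-1}) \ge -1$, we get $M \ge 0$, and by definition of the Frobenius number $M = \sum_{i=1}^{n-1} c_i b_i$ with $c_i \in \mathbb{N}_0$. Then $m = d M + t a_n = \sum_{i=1}^{n-1} c_i a_i + t a_n$ exhibits $m$ as representable by $\{a_1, \ldots, a_n\}$, which completes the induction.

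The conceptual content is entirely in the base case together with the remark that reducing $m$ modulo $d$ by a multiple of $a_n$ --- legitimate precisely because $\gcd(a_n, d) = 1$ --- turns the $n$-variable problem into the $(n-1)$-variable problem for the rescaled numbers $b_i = a_i/d$. I expect the only real obstacle to be clerical: confirming that the threshold $T(a_1, \ldots, a_n) - \sum a_i$, after multiplying through by $d$ and absorbing the worst case $t = d-1$, lands exactly on the threshold $d\bigl(T(b_1, \ldots, b_{n-1}) - \sum b_i\bigr)$ produced by the inductive hypothesis, i.e. that the surplus is precisely $(d-1)a_n$.
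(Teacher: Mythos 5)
Your proof is correct. Note that the paper does not actually prove this statement at all: its ``proof'' is the single line ``See Theorem 3.1.2 of \cite{alf}.'' So there is nothing to compare against except the literature; what you have written is essentially Brauer's original argument, a self-contained induction on $n$. The two key points both check out: (i) the rescaled integers $b_i = a_i/d_{n-1}$ satisfy the hypotheses of the inductive step with $T(b_1,\ldots,b_{n-1}) = \frac{1}{d}\sum_{i=1}^{n-2} a_{i+1} d_i/d_{i+1}$, and (ii) the arithmetic in the reduction of $m$ modulo $d$ works out exactly, since the $i=n-1$ term $a_n d$ of $T(a_1,\ldots,a_n)$ cancels against $a_n + (d-1)a_n$ when you absorb the worst case $t = d-1$, landing precisely on the threshold $d\bigl(T(b)-\sum b_i\bigr) \ge d\,F(b_1,\ldots,b_{n-1})$. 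One tiny redundancy: once $M > F(b_1,\ldots,b_{n-1})$, representability of $M$ follows directly from the definition of the Frobenius number as the largest unrepresentable integer, so the separate remark that $M \ge 0$ is not needed (though it is harmless and arguably clarifying). Your handling of the degenerate base case $n=1$ with the convention $F(1) = -1$ is also the standard one and is consistent with how the bound is used in Lemma \ref{lowerbeta}.
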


\begin{proof}
See Theorem 3.1.2 of \cite{alf}.
\end{proof}

Note that if we can establish an upper bound, then the representable integers greater than the upper bound will be spaced evenly apart by 1 unit length. We wish to do something similar later on in the proof of Lemma \ref{lowerbeta}. There the Frobenius problem will be applied to a set of fractions related to the values of elements of a generating sequence. First the fractions are written in terms of their least common denominator, say $d$, then the Frobenius problem is applied to the numerators of a set of fractions with denominator $d$. Note that beyond the upper bound, all the representable fractions with the common denominator $d$ will be spaced evenly apart by $1/d$ units.

\begin{lem} \label{lowerbeta} Let $\nu$ be a non-divisorial valuation. For $1\leq i < g'$,
\begin{equation} \label{qibetai}
q_i \beta_i=\sum_{j=0}^{i-1} \alpha_j\beta_j
\end{equation}
for some $\alpha_j\in\mathbb{N}_0$. If $\nu$ is Type 4.1, then this holds for $1\leq i\leq g'$.
\end{lem}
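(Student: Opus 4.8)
### Proof proposal

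The plan is to prove the identity $q_i\beta_i = \sum_{j=0}^{i-1}\alpha_j\beta_j$ by combining the recursive formula for $\beta_i$ from Lemma \ref{genvalue} (equivalently Formula (\ref{betarecur})) with an induction that tracks how the value $q_i\beta_i$ sits inside the semigroup $S_{i-1}$ generated by $\beta_0,\ldots,\beta_{i-1}$. The geometric content I would lean on is already isolated in Lemmas \ref{leveli}, \ref{leveli+} and \ref{strict}: at level $\Sigma(i,0,1)$ every $Q_j$ with $j<i$ becomes $X_i^{f_{j}}\cdot(\text{unit})$ for some $f_j\in\mathbb N$, and $Q_i$ becomes $X_i^{e}Y_i\cdot(\text{unit})$. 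The crucial algebraic fact is that the recursive definition (\ref{genrecur}) of $Q_i$ writes $Q_i = Q_{i-1}^{q_{i-1}} - \sum_h u_h\prod_{j=0}^{i-2}Q_j^{\gamma_{j,h}^{(i)}}$ with the exponent condition $\sum_{j=0}^{i-2}\gamma_{j,h}^{(i)}\nu(Q_j)=q_{i-1}\nu(Q_{i-1})$; in particular $q_{i-1}\beta_{i-1}$ is already a non-negative integral combination of $\beta_0,\ldots,\beta_{i-2}$. This is exactly the statement (\ref{qibetai}) at index $i-1$, so the recursion on the $Q$'s essentially hands us the inductive step — provided we can push the index up from $i-1$ to $i$.

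First I would establish the base case $i=1$: by the remark after Lemma \ref{genvalue}, $\beta_1 = \frac{p_1}{q_1}\beta_0$, so $q_1\beta_1 = p_1\beta_0$ with $p_1\in\mathbb N$, giving (\ref{qibetai}) with $\alpha_0=p_1$. For the inductive step, assume $q_{i-1}\beta_{i-1}=\sum_{j=0}^{i-2}\alpha_j'\beta_j$ with $\alpha_j'\in\mathbb N_0$ (this is precisely the content of the exponent condition in (\ref{genrecur}) at level $i$, so it is available whenever $i\le g'$ and $Q_i$ is defined by that recursion, i.e. whenever $i-1<g'$, and also at $i=g'$ in the Type 4.1 case by Lemma \ref{genvalue}). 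Now compute $q_i\beta_i$ using Lemma \ref{genvalue}:
\begin{equation*}
q_i\beta_i = q_i q_{i-1}\beta_{i-1} + \frac{q_i}{q_1\cdots q_{i-1}}(\beta_i'-1)\beta_0.
\end{equation*}
Since $\beta_i' = p_i/q_i$, the second term equals $\frac{p_i - q_i}{q_1\cdots q_{i-1}}\beta_0$. The key point is that $p_i - q_i \ge 0$: indeed $\beta_i' = [a_1^{(i)},\ldots,a_{m_i}^{(i)},1] \ge 1$ whenever $a_1^{(i)}\ge 1$, and for $i\ge 2$ the segment count $a_1^{(i)}$ records a $z$-blowup followed by $x$-blowups so $a_1^{(i)}\ge 1$; hence $p_i\ge q_i$. (For $i=1$ one checks $p_1\ge q_1$ directly, or simply treats $i=1$ as the base case as above.) Therefore $\frac{p_i-q_i}{q_1\cdots q_{i-1}}\beta_0$ is a non-negative integer multiple of $\nu(\tilde X_{i-1})=\frac{1}{q_1\cdots q_{i-1}}\beta_0$; but $\nu(\tilde X_{i-1})=\nu(X_i)$ and by Lemma \ref{leveli+} (applied to $Q_0=x$, whose transform at $\Sigma(i,0,1)$ is $X_i^{f_{0,i}}\cdot(\text{unit})$ with $f_{0,i}=q_1\cdots q_{i-1}$) we get $\nu(X_i) = \frac{1}{q_1\cdots q_{i-1}}\beta_0 = \frac{1}{q_1\cdots q_{i-1}}\beta_0$, and more usefully $\beta_0 = q_1\cdots q_{i-1}\,\nu(X_i)$ shows $\frac{1}{q_1\cdots q_{i-1}}\beta_0$ is a value in $S$ realized by a power of $Q_0$. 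Thus the second term is a non-negative integral multiple of $\beta_0$. Finally, $q_i q_{i-1}\beta_{i-1} = q_i\sum_{j=0}^{i-2}\alpha_j'\beta_j$ by the inductive hypothesis, which is again a non-negative integral combination of $\beta_0,\ldots,\beta_{i-2}$. Adding the two contributions expresses $q_i\beta_i$ as $\sum_{j=0}^{i-1}\alpha_j\beta_j$ with $\alpha_j\in\mathbb N_0$ (the coefficient of $\beta_{i-1}$ happens to be zero in this derivation, which is fine).

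The main obstacle I anticipate is not the arithmetic but making sure the exponent/value bookkeeping is self-consistent across the different types, and in particular verifying the non-negativity $p_i - q_i\ge 0$ cleanly — this is what guarantees the new term genuinely lands in the semigroup generated by the earlier $\beta_j$ rather than merely in the group they generate. A secondary subtlety is the range of validity: for Types 2, 3, 4.2 the top index $g'$ is excluded from the statement precisely because $\nu(Q_{g'})$ is governed by the jump-value formulas of Lemma \ref{genvalue} rather than (\ref{usualgenvalue}), so the argument above must be invoked only for $i<g'$; the Type 4.1 case is the exception since (\ref{usualgenvalue}) persists to $i=g'$ there, which is exactly the clause appended to the lemma. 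I would close by noting that the identity can equivalently be read off directly from (\ref{genrecur}): transforming $Q_i^{q_i}$ and each monomial $\prod_j Q_j^{\gamma_{j,h}^{(i+1)}}$ to level $\Sigma(i,0,1)$ and comparing exponents of $X_i$ recovers the same relation, which is the route taken implicitly in the proofs of Lemmas \ref{strict} and \ref{genvalue}.
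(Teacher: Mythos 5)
There is a genuine gap, and it sits exactly where the real content of the lemma lives. Your decomposition $q_i\beta_i = q_iq_{i-1}\beta_{i-1} + \frac{p_i-q_i}{q_1\cdots q_{i-1}}\beta_0$ is correct, and the first term is indeed handled by induction, but your claim that the second term ``is a non-negative integral multiple of $\beta_0$'' is false: that would require $q_1\cdots q_{i-1}\mid (p_i-q_i)$, which does not hold in general. Knowing that $\frac{1}{q_1\cdots q_{i-1}}\beta_0$ equals $\nu(X_i)$ does not help, since $X_i$ is a regular parameter of $R_{\Sigma(i,0,1)}$, not an element of $R$, so that value need not lie in $S$, let alone in the sub-semigroup generated by $\beta_0,\ldots,\beta_{i-1}$. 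Example \ref{dgex} is a counterexample to your term-by-term argument: there $p_2=4$, $q_2=3$, $q_1=2$, $\beta_1=7/2$, and $q_2\beta_2=43/2$ splits as $21+\tfrac12$; the leftover $\tfrac12\beta_0$ is not representable by $\{\beta_0,\beta_1\}=\{1,7/2\}$. Moreover $43=2\alpha_0+7\alpha_1$ forces $\alpha_1$ odd, so the coefficient of $\beta_{i-1}$ in any valid representation of $q_2\beta_2$ cannot be zero --- directly contradicting your parenthetical remark that it ``happens to be zero, which is fine.'' The point is that the fractional remainder must be absorbed by redistributing the \emph{whole} sum among all of $\beta_0,\ldots,\beta_{i-1}$, not by representing each summand separately.

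This is precisely why the paper's proof goes through the Frobenius problem rather than a direct induction: it writes every $\beta_j=\frac{n_j}{q_1\cdots q_{i-1}}\beta_0$, proves $\gcd(n_0,\ldots,n_{i-1})=1$ by an induction using the recursion $\tau_j=q_jq_{j-1}\tau_{j-1}+p_j-q_j$, and then applies Brauer's upper bound (Theorem \ref{frobbound}) to show that the numerator $q_iq_{i-1}n_{i-1}+p_i-q_i$ of $q_i\beta_i$ exceeds the Frobenius number $F(n_0,\ldots,n_{i-1})$, hence is representable. A secondary issue with your route: taking the exponent condition in (\ref{genrecur}) as the inductive hypothesis is close to circular, since the existence of monomials $\prod_j Q_j^{\gamma_{j,h}}$ with value $q_{i-1}\nu(Q_{i-1})$ is exactly what this lemma is needed to justify. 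To repair the proof you would need to supply the representability argument for the combined quantity, which in effect reproduces the paper's Frobenius-bound computation.
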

\begin{proof}
Write $\displaystyle \beta_j=\frac{n_j}{q_1\cdots q_{i-1}}\beta_0$. The idea is to apply Theorem \ref{frobbound} to the set $\{n_0,\ldots,n_{i-1}\}$ and show that after writing $q_i\beta_i$ with denominator $\prod_{h=1}^{i-1} q_h$, the numerator of $q_i\beta_i$ is greater than the Frobenius number $F(n_0,\ldots,n_{i-1})$. The result immediately follows.

First, notice that $\text{gcd}(n_0\beta_0,\ldots,n_j\beta_0)=\text{gcd}(n_0,\ldots,n_j)\beta_0$. For clarity, we will essentially drop the common $\beta_0$ factor from all the values in the following arguments.

Using Lemma \ref{genvalue},
\begin{equation*}
q_i\beta_i=q_i\left(q_{i-1}\beta_{i-1}+\frac{p_i-q_i}{q_1\cdots q_i}\right)=\frac{q_iq_{i-1}n_{i-1}}{q_1\cdots q_{i-1}} + \frac{p_i-q_i}{q_1\cdots q_{i-1}}
\end{equation*}
so the desired numerator is $q_iq_{i-1}n_{i-1}+p_i-q_i$.

Write $\displaystyle \beta_j=\frac{\tau_j}{q_1\cdots q_j}\beta_0$. Notice $\displaystyle n_j=\tau_j\prod_{h=j+1}^{i-1} q_h$. Using Lemma \ref{genvalue}, it is easy to see that
\begin{equation} \label{taurecur}
\tau_j=q_jq_{j-1}\tau_{j-1}+p_j-q_j
\end{equation}
for $1\leq j \leq i-1$. Multiplying by $\displaystyle \prod_{h=j+1}^{i-1} q_h$,
\begin{equation} \label{nrecur}
n_j=q_{j-1}n_{j-1}+(p_j-q_j)\prod_{h=j+1}^{i-1} q_h
\end{equation}

Now $\beta_1=\beta_1'=p_1/q_1$, where $\text{gcd}(p_1,q_1)=1$, and $\beta_0=q_1/q_1$. Note that for Type 4.2, we use $\beta_0=(0,q_1/q_1)$ and the following arguments are similarly adjusted. For Type 3, $\beta_0$ has two coordinates and the semigroup values generated by $\{Q_i\}_{i=0}^{g'-1}$ are rational multiples of $\beta_0$. Some small adjustments need to be made in the following arguments, but we are in effect saying use $\beta_0= q_1/q_1 \cdot \beta_0$ in the Type 3 case.

So with $i=2$, $\text{gcd}(q_1,p_1)=\text{gcd}(n_0,n_1)$, and we see $\text{gcd}(n_0,n_1)=\text{gcd}(\tau_0 q_1,\tau_1)=1$. This is the base step of an induction on $i$ to show that $\text{gcd}(n_0,\ldots,n_{i-1})=1$. At level $i-1$, working with denominators $\prod_{h=1}^{i-2}q_h$, assume that:
\begin{equation*}
\text{gcd}(n_0,\ldots,n_{i-2})=\text{gcd}\left(\tau_0\prod_{h=1}^{i-2}q_h, \ldots ,\tau_{i-4}\prod_{h=i-3}^{i-2}q_h,\tau_{i-3}q_{i-2},\tau_{i-2}\right)=1
\end{equation*}
Multiplying by $q_{i-1}$ yields:
\begin{equation} \label{gcdfront}
\text{gcd}\left(\tau_0\prod_{h=1}^{i-1}q_h, \ldots ,\tau_{i-3}\prod_{h=i-2}^{i-1}q_h,\tau_{i-2} q_{i-1}\right)=q_{i-1}
\end{equation}
And so at level $i$, with denominators $\prod_{h=1}^{i-1} q_h$:
\begin{equation*}
\begin{array}{rl}
\displaystyle \text{gcd}(n_0,\ldots,n_{i-1}) & =\displaystyle \text{gcd}\left(\tau_0 \prod_{h=1}^{i-1}q_h, \ldots ,\tau_{i-3}\prod_{h=i-2}^{i-1}q_h,\tau_{i-2} q_{i-1},\tau_{i-1}\right)
\\\\
& = \text{gcd}\left(q_{i-1},\tau_{i-1}\right) \text{ by (\ref{gcdfront})}
\\\\
& = \text{gcd}\left(q_{i-1},q_{i-1}q_{i-2}\tau_{i-2}+p_{i-1}-q_{i-1}\right) \text{ by (\ref{taurecur})}
\\\\
& = \text{gcd}(q_{i-1},p_{i-1})=1
\end{array}
\end{equation*}
The induction is complete and thus we may apply the Frobenius upper bound in Theorem \ref{frobbound}.

Now working at level $i$ and denominators $\prod_{h=1}^{i-1} q_h$, let $d_j=\text{gcd}(n_0,\ldots,n_j)$ for $0\leq j\leq i-1$, and let $\displaystyle T=\sum_{j=0}^{i-2} n_{j+1}d_j/d_{j+1}$. Note that $d_0=n_0=\prod_{h=1}^{i-1}q_h$. Using gcd calculations similar to those previously done, it is easy to see that $d_j=\prod_{h=j+1}^{i-1}q_h$, hence $d_j/d_{j+1}=q_{j+1}$.
\begin{equation*}
\begin{array}{l}
\displaystyle T-\sum_{j=0}^{i-1}n_j =\sum_{j=0}^{i-2} n_{j+1}q_{j+1}-\sum_{j=0}^{i-1}n_j
\\\\
 = \displaystyle \sum_{j=1}^{i-1}n_jq_j-n_0-n_1-\sum_{j=2}^{i-1} n_j
\\\\
 = \displaystyle \sum_{j=1}^{i-1}n_jq_j-n_0-n_1-\sum_{j=2}^{i-1} n_{j-1}q_{j-1} - \sum_{j=2}^{i-1} (p_j-q_j)\prod_{h=j+1}^{i-1} q_h  \indent \text{by}\left.\right.(\ref{nrecur})
\\\\
 = \displaystyle n_{i-1}q_{i-1}-n_0-n_1-\sum_{j=2}^{i-1} (p_j-q_j)\prod_{h=j+1}^{i-1} q_h
\end{array}
\end{equation*}
Finally, comparing $T-\sum_{j=0}^{i-1}n_j$ with $q_iq_{i-1}n_{i-1}+p_i-q_i$, i.e. the numerator of $q_i\beta_i$:
\begin{equation*}
n_{i-1}q_{i-1} < q_iq_{i-1}n_{i-1}
\end{equation*}
and
\begin{equation*}
-n_0-n_1-\sum_{j=2}^{i-1}(p_j-q_j)\prod_{h=j+1}^{i-1} q_h \left.\right.<\left.\right. 0 \left.\right.<\left.\right. p_i-q_i
\end{equation*}
and the induction is complete by noting:
\begin{equation*}
F(n_0,\ldots,n_{i-1}) \leq T-\sum_{j=0}^{i-1}n_j<q_iq_{i-1}n_{i-1}+p_i-q_i
\end{equation*}

Now consider what happens at $g'$ to get the upper bound on $i$ for which Equation (\ref{qibetai}) is valid. Note that the argument above works so long as $\beta_i=\nu(Q_i)$ satisfies (\ref{usualgenvalue}). This is true in the Type 1 case for all $i$ by construction ($g'=\infty$).

For Types 2 and 4.2, there is no linear dependence relation possible between $\beta_{g'}$ and $\displaystyle \{\beta_j\}_{j=0}^{g'-1}$ because of a jump in rational rank or rank, respectively. Hence we have strict inequality $i<g'$.

In the Type 3 case, the strict inequality $i<g'$ follows from the fact that $e\beta_g$ cannot be written as $\displaystyle \frac{f}{q_1\cdots q_{g-1}}\beta_0$, where $e,f\in\mathbb{N}$. This will now be proved. By Lemma \ref{genvalue}, $\nu(Q_g)=q_{g-1}\nu(Q_{g-1})+\nu(Y_g)$ so it suffices to show $e\nu(Y_g)$ cannot be written as $\displaystyle \frac{f}{q_1\cdots q_{g-1}}\beta_0$. By Lemma \ref{genvalue},
\begin{equation*}
\beta_0=q_1\cdots q_{g-1}(\mu_{m-1},\mu_{m-2})
\end{equation*}
Assume $\displaystyle e\nu(Y_g)=\frac{f}{q_1\cdots q_{g-1}}\beta_0$. This implies:
\begin{equation*}
e\left(\lambda_{m-1}-\mu_{m-1},\lambda_{m-2}-\mu_{m-2}\right)=f\left(\mu_{m-1},\mu_{m-2}\right)
\end{equation*}
where $m=m_g$ and $\lambda_j/\mu_j$ is the $j$-th convergent of $\left[a_1^{(g)},a_2^{(g)},\ldots,a_{m-1}^{(g)}\right]$. Equating componentwise:
\begin{equation*}
\begin{array}{l}
e\lambda_{m-1}-e\mu_{m-1}=f\mu_{m-1} \\
e\lambda_{m-2}-e\mu_{m-2}=f\mu_{m-2}
\end{array}
\end{equation*}
Hence,
\begin{equation*}
\lambda_{m-1}/\mu_{m-1}=(e+f)/e=\lambda_{m-2}/\mu_{m-2}
\end{equation*}
which is a contradiction since consecutive convergents of a continued fraction cannot be equal.

In the Type 4.1 case, note that $\displaystyle q_g \beta_g=\frac{n}{q_1\cdots q_{g-1}}\beta_0$ by construction, where $n\in\mathbb{N}$, hence the lemma holds for $i=g'$ as well using the earlier Frobenius upper bound argument.
\end{proof}

\begin{lem} \label{nonredundant}
Only one element of a minimal generating sequence is necessary to introduce each new denominator for value jumps. More precisely, if $Q_i$ is part of a minimal generating sequence with $\displaystyle \nu(Q_i)=\frac{n_i}{q_1\cdots q_i}\beta_0$, where $n_i\in\mathbb{N}$, then a potential $\bar Q_i\in R$ with $\displaystyle \nu(\bar Q_i)=\frac{n}{q_1\cdots q_i}\beta_0$ would be redundant to include in a minimal generating sequence containing $Q_i$, where $n_i<n\in\mathbb{N}$.
\end{lem}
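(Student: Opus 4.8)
The plan is to convert the statement into a question about representability of an integer by a finite set — exactly the number-theoretic setting of Lemma~\ref{lowerbeta} — and then run a Brauer-type estimate (Theorem~\ref{frobbound}). Adopting the value-semigroup viewpoint, ``$\bar Q_i$ is redundant'' means that $\nu(\bar Q_i)$ already lies in $\langle \nu(Q_0),\dots,\nu(Q_{g'})\rangle$. Since $\nu(\bar Q_i)$ has denominator dividing $q_1\cdots q_i$, and since any summand $\beta_j$ with $j>i$ occurring in a representation of such a value must occur in a multiple of $q_j$ and can therefore be replaced, via the relation $q_j\beta_j=\sum_{l<j}\alpha_l\beta_l$ of Lemma~\ref{lowerbeta}, by smaller $\beta_l$'s, it suffices to prove $\nu(\bar Q_i)\in S_i:=\langle \beta_0,\dots,\beta_i\rangle$.

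First I would pin down the possible values $\nu(\bar Q_i)$. Because $\bar Q_i$ is meant to introduce the denominator $q_1\cdots q_i$ at $\Sigma(i,0,1)$, its total transform at $\Sigma(i,0,0)$ has the jump-producing shape $\tilde X_{i-1}^{f_1}\tilde Y_{i-1}^{f_2}(\tilde Y_{i-1}-c_i\tilde X_{i-1})u$ of Lemma~\ref{strict}, and by Lemma~\ref{minimalvalue} the element $Q_i$ realizes the minimal exponent sum $f_1^{0}+f_2^{0}$ among all such elements. Using $\nu(\tilde X_{i-1})=\nu(\tilde Y_{i-1})=\frac{1}{q_1\cdots q_{i-1}}\beta_0$ from Lemma~\ref{valuetilde}, this yields $\nu(\bar Q_i)=\nu(Q_i)+k\cdot\frac{1}{q_1\cdots q_{i-1}}\beta_0$ for some $k\in\mathbb N$, that is, $n=n_i+kq_i$. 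Next, clearing denominators, write $\beta_j=\frac{N_j}{q_1\cdots q_i}\beta_0$ for $0\le j\le i$; then $N_0=q_1\cdots q_i$, $N_j=\tau_j\,q_{j+1}\cdots q_i$ for $1\le j\le i-1$, $N_i=\tau_i=n_i$, and the membership $\nu(\bar Q_i)\in S_i$ is equivalent to representability of $n=N_i+kq_i$ over $\mathbb N_0$ by $\{N_0,\dots,N_i\}$. From the computations in the proof of Lemma~\ref{lowerbeta} one has $\gcd(N_0,\dots,N_{i-1})=q_i$ and $\gcd(q_i,N_i)=\gcd(q_i,p_i)=1$, the numbers $n_j':=N_j/q_i$ ($0\le j\le i-1$) satisfy $\gcd(n_0',\dots,n_{i-1}')=1$ with Brauer bound $F(n_0',\dots,n_{i-1}')<n_i$ (Theorem~\ref{frobbound}, exactly as in Lemma~\ref{lowerbeta}), and Lemma~\ref{lowerbeta} also gives $q_iN_i\in\langle N_0,\dots,N_{i-1}\rangle$, i.e. $N_i\in\langle n_0',\dots,n_{i-1}'\rangle$.

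The final step is the decomposition $n=N_i+kq_i$: peeling off one copy of $N_i$ reduces representability of $n$ by $\{N_0,\dots,N_i\}$ to representability of $kq_i$ by $\{N_0,\dots,N_{i-1}\}=q_i\cdot\{n_0',\dots,n_{i-1}'\}$, i.e. to representability of the gap $k$ by $\{n_0',\dots,n_{i-1}'\}$. I expect this to be the main obstacle: one must show that the gap $k$ forced on a genuine $\bar Q_i\in R$ carrying the prescribed strict transform is representable by $\{n_0',\dots,n_{i-1}'\}$. This is immediate once $k>F(n_0',\dots,n_{i-1}')$, where the Brauer bound $F<n_i$ does the work; for the remaining small gaps the point is that such $k$ arise only from multiplying $Q_i$ by monomials in $Q_0,\dots,Q_{i-1}$ — equivalently, $k\cdot\frac{1}{q_1\cdots q_{i-1}}\beta_0\in S_{i-1}$ — which by the same denominator-clearing already puts $k\in\langle n_0',\dots,n_{i-1}'\rangle$. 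Assembling these, $n$ is representable by $\{N_0,\dots,N_i\}$, so $\nu(\bar Q_i)\in S_i$ and $\bar Q_i$ is redundant; the bookkeeping with convergents mirrors that of Lemmas~\ref{strict} and \ref{lowerbeta}, and, as there, the only genuinely delicate point is getting the Frobenius inequality to fall the right way.
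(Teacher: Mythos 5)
Your overall strategy (reduce redundancy to representability of an integer and invoke the Brauer bound) is different from the paper's, and the reduction works cleanly only for the range that was never in doubt. The substance of the lemma is precisely the ``small gap'' regime: by the Frobenius estimate of Lemma~\ref{lowerbeta}, any candidate value $\geq q_i\beta_i$ with denominator dividing $q_1\cdots q_i$ is automatically representable, so the only elements that could be non-redundant are those with $\beta_i<\nu(\bar Q_i)<q_i\beta_i$, i.e.\ exactly your $k\leq F(n_0',\ldots,n_{i-1}')$. For those you write that ``such $k$ arise only from multiplying $Q_i$ by monomials in $Q_0,\dots,Q_{i-1}$,'' which is not an argument but a restatement of the conclusion: the question is precisely why an element of $R$ whose strict transform at $\Sigma(i,0,0)$ is $\tilde Y_{i-1}-c_i\tilde X_{i-1}$ cannot carry an exceptional multiplicity whose excess $k\cdot\frac{1}{q_1\cdots q_{i-1}}\beta_0$ over that of $Q_i$ lies outside $S_{i-1}$. (Note also that $\bar Q_i/Q_i$ need not lie in $R$, so you cannot get $\frac{k}{q_1\cdots q_{i-1}}\beta_0\in S$ for free.) A secondary gap: your parametrization $n=n_i+kq_i$ presumes the total transform of $\bar Q_i$ contains exactly one factor of $\tilde Y_{i-1}-c_i\tilde X_{i-1}$, but the lemma quantifies over all $\bar Q_i$ with $\nu(\bar Q_i)=\frac{n}{q_1\cdots q_i}\beta_0$ and $n>n_i$; e.g.\ values with $n\equiv 2n_i\pmod{q_i}$ occur and are not of your form, so your argument does not even begin for them.

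For comparison, the paper closes the small-gap case by an algebraic analysis rather than a counting one: a putatively necessary $\bar Q_i$ must be a sum of monomials in $Q_0,\ldots,Q_i$ all sharing a common value $\varepsilon$ (needed for a value jump). If $Q_i$ appears in some but not all terms, matching $\varepsilon$ forces its exponent to be a multiple of $q_i$, pushing $\nu(\bar Q_i)\geq q_i\beta_i$; if it appears in every term it factors out, contradicting minimality; if it appears in none, Lemma~\ref{leveli+} shows every term becomes a unit times a power of $X_i$ at $\Sigma(i,0,1)$, so $\nu(\bar Q_i)$ has denominator only $q_1\cdots q_{i-1}$, again a contradiction. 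Some version of that case analysis (or of the exponent bookkeeping in Lemma~\ref{minimalvalue}) is what your proposal is missing; without it the Frobenius machinery only re-proves the easy half.
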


\begin{proof}
Use induction on $i$. We have $\nu(Q_0)=\beta_0$ and $\nu(Q_1)=\frac{p_1}{q_1}\beta_0$. Assume $\bar Q_1$ is the next element in the minimal generating sequence after $Q_0$ and $Q_1$ where $\bar Q_1$ and $Q_1$ have values with the same denominator $q_1$. The proof of Lemma \ref{lowerbeta} shows that $\beta_0$ and $\beta_1$ are sufficient to generate all values in $S$ with denominator $q_1$ and greater than or equal to $q_1\beta_1$. Thus, $\beta_1 < \nu(\bar Q_1) < q_1\beta_1$.

In order to have a jump in value, we need two or more equal-valued terms with a common value, say, $\varepsilon$. Let
\begin{equation*}
\bar Q_1 = u_1 Q_1^{e_1} + u_0 Q_0^{e_0} + \sum_j v_j T_j
\end{equation*}
where $e_0,e_1\in\mathbb{N}_0$, $\{u_j\}$ and $\{v_j\}$ are in $k$, and $\{T_j\}$ are monomials $Q_0^{f_{0,j}}Q_1^{f_{1,j}}$ with $f_{0,j}\geq 1$ and $f_{1,j} \geq 1$. All terms have the common value $\varepsilon$.

Assume both $u_1$ and $u_0$ are zero. Then we can factor out $Q_0$ or $Q_1$ from the remaining $T_j$ terms, contradicting the minimality of $\bar Q_1$. Assume both $u_1$ and $u_0$ are non-zero. Notice that $\beta_1$ introduced a new denominator $q_1$ which needs to be cleared in order for $\varepsilon$ to be representable by $\beta_0$. Hence $e_1=mq_1$, where $m\geq 1$, and this implies $\nu(\bar Q_1) > q_1\beta_1$, a contradiction. Assume only one of $u_1$ and $u_0$ is non-zero. For concreteness, let $u_1\neq0$ and $u_0=0$; the other way is similar. There exists a $v_j\neq 0$, since we need at least two terms with the same value $\varepsilon$ for a value jump. This contradicts the minimality of $\bar Q_1$ since we can factor out $Q_1$ from all the terms. Thus, $\bar Q_1$ does not exist.

Assume by inductive hypothesis that $\displaystyle \{Q_j\}_{j=0}^i$ form the beginnings of a minimal generating sequence. Let $\bar Q_i$ be the next minimal generating sequence element after $Q_i$ and whose value is assumed to not introduce a new denominator. That is, $\displaystyle \nu(\bar Q_i)=\frac{n}{q_1\cdots q_i}\beta_0$ for some $n\in\mathbb{N}$. The proof of Lemma \ref{lowerbeta} implies $\beta_i < \nu(\bar Q_i) < q_i\beta_i$.

In order to have a jump in value, we need two or more equal-valued terms with a common value, say, $\varepsilon$. Let
\begin{equation*}
\bar Q_i = \sum_{j=0}^{i} u_j Q_j^{e_j} + \sum_h v_h T_h
\end{equation*}
where $e_j\in\mathbb{N}_0$, $\{u_j\}$ and $\{v_h\}$ are in $k$, and $\{T_h\}$ are monomials in $\{Q_j\}_{j=0}^i$ consisting of at least two distinct factors.

Assume $u_i\neq 0$. Then we can either factor out $Q_i$ from all terms, contradicting the minimality of $\bar Q_i$, or at least one term does not have a factor of $Q_i$. In the latter case, $e_i=mq_i$, where $m\in\mathbb{N}$, in order for the terms to have a common value $\varepsilon$ since $\beta_i$ introduced a new factor $q_i$ in the denominator $\prod_{j=1}^{i}q_j$. This implies $\nu(\bar Q_i) > q_i\beta_i$, a contradiction. Hence $u_i=0$. For similar reasons, any monomial $T_h$ with non-zero $v_h$ (i.e. a {\it supported} $T_h$) cannot contain a factor of $Q_i$.

Assume $u_i=0$ and assume $Q_i$ does not show up in any supported $T_h$. By Lemma \ref{leveli+}, all the $\{Q_j\}_{j=0}^{i-1}$ will transform to the following form at $\Sigma(i,0,1)$: $uX_i^e$, where $u$ is a unit and $e\in\mathbb{N}$. Hence all the terms of $\bar Q_i$ will transform to the same form: $vX_i^f$, where $f\in\mathbb{N}$ is the same for all terms since they have common value $\varepsilon$, and where $v$ is a unit. Factoring out $X_i^f$ and absorbing all the units from each term into one unit, we see by Lemma \ref{beta'-1} that $\displaystyle \nu(\bar Q_i)=\frac{f}{q_1\cdots q_{i-1}}\beta_0$, a contradiction. Thus, $\bar Q_i$ does not exist.
\end{proof}

\begin{lem} \label{nonredundant234}
For Type 2 valuations, $Q_g$ is the last minimal generating sequence element. For Type 3 valuations, $Q_g$ is the last minimal generating sequence element. For Type 4.2 valuations, $Q_{g+1}$ is the last minimal generating sequence element.
\end{lem}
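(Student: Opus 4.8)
The plan is to treat Types 2, 3, and 4.2 in parallel, writing $g'=g$ in the first two cases and $g'=g+1$ in the last, and to establish two things: (i) no generating sequence element is needed beyond $Q_{g'}$, and (ii) there is no second candidate $\bar Q_{g'}$ at the jump level sharing the strict transform of $Q_{g'}$ at $\Sigma(g',0,0)$. The argument for (ii) runs parallel to the proof of Lemma~\ref{nonredundant}, now carried out at the level where the rational rank (Type 2) or the rank (Types 3 and 4.2) jumps.

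For (i) I would first read off from the dual graph shapes (Figures~\ref{irrational}, \ref{type3odd}, \ref{type3even}, \ref{type4}) that the $z$-blowup from $\Sigma(g',0,0)$ to $\Sigma(g',0,1)$ is the \emph{last} $z$-blowup in the whole sequence: in Type 2 the remaining segments of $G_g$ consist only of $x$- and $y$-blowups; in Type 3 the last segment of $G_g$ is an infinite run of a single kind of blowup; in Type 4.2 the tail $G_{g+1}$ is one $z$-blowup followed by infinitely many $x$-blowups. Consequently the recursion~(\ref{genrecur}) cannot be continued past $Q_{g'}$: for Types 2 and 4.2 the data $\beta_{g'}'$ and $q_{g'}$ are not defined, while for Type 3 any $q\,\nu(Q_{g'})$ with $q\in\mathbb N$ carries the skew part $q\,\nu(Y_g)$, which by the Type 3 computation in the proof of Lemma~\ref{lowerbeta} can never equal $\sum_{j<g'}\alpha_j\nu(Q_j)$, all those values being rational multiples of $(\mu_{m-1},\mu_{m-2})$. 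Combined with the sieving framework --- Theorem~\ref{lu} says every value of $S$ is reflected in the regular parameters at some level, and $x$- and $y$-blowups only subtract parameter values (hence introduce no new denominator, irrational, or jump in rank or rational rank) --- this shows $\{Q_i\}_{i=0}^{g'}$ already accounts for all of $S$, so no element beyond $Q_{g'}$ is needed.

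For (ii), suppose $\bar Q_{g'}\in R$ has strict transform $\tilde Y_{g'-1}-c_{g'}\tilde X_{g'-1}$ at $\Sigma(g',0,0)$. By Lemma~\ref{minimalvalue}, $Q_{g'}$ is a lowest valued such element, so $\nu(\bar Q_{g'})\ge\nu(Q_{g'})$, with equality leaving nothing to prove; so assume $\bar Q_{g'}$ carries a strictly larger exceptional transform. Performing the $z$-blowup and applying Lemma~\ref{leveli}, $Q_{g'}$ and $\bar Q_{g'}$ transform at $\Sigma(g',0,1)$ to a unit times $X_{g'}^{e}Y_{g'}$ and $X_{g'}^{e'}Y_{g'}$ respectively with $e'>e$, so $\nu(\bar Q_{g'})=\nu(Q_{g'})+(e'-e)\,\nu(X_{g'})$, where $\nu(X_{g'})=\frac{1}{q_1\cdots q_{g'-1}}\beta_0$ by Lemma~\ref{beta'-1} (with $\beta_0$ the rank-two datum of Lemma~\ref{genvalue} in the Type 3 case). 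Writing $\nu(Q_j)=\frac{n_j}{q_1\cdots q_{g'-1}}\beta_0$ for $j<g'$ as in the proof of Lemma~\ref{lowerbeta}, one has $\gcd(n_0,\dots,n_{g'-1})=1$, and by Lemma~\ref{leveli+} each $Q_j$ transforms at $\Sigma(g',0,1)$ to a unit times $X_{g'}^{n_j}$. Since the exceptional transform of $\bar Q_{g'}$ must arise from a polynomial combination of $Q_0,\dots,Q_{g'-1}$, the exceptional-transform bookkeeping already used in the proofs of Lemmas~\ref{minimalvalue} and \ref{genvalue} --- together with the Frobenius bound of Lemma~\ref{lowerbeta} for the larger exponents --- forces $e'-e$ to be representable by $\{n_j\}$, i.e.\ $(e'-e)\,\nu(X_{g'})\in S_{g'-1}$; otherwise $\bar Q_{g'}$ would violate minimality. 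Hence $\nu(\bar Q_{g'})=\nu(Q_{g'})+(e'-e)\,\nu(X_{g'})\in S_{g'}$ and $\bar Q_{g'}$ is redundant; together with (i) this proves the lemma.

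The step I expect to be the main obstacle is the last one: pinning down the exceptional-transform exponents precisely enough to see that every larger-exceptional-transform candidate is redundant rather than genuinely new, and carrying this out in the Type 3 rank-two setting, where $\beta_0$, $\nu(X_g)$ and $\nu(Y_g)$ all lie in $\mathbb Z\times\mathbb Z$ with the lexicographic order, so the Frobenius bound of Theorem~\ref{frobbound} must be applied coordinatewise after clearing denominators in the second coordinate while the first coordinate is kept under control. A secondary subtlety is the Type 4.2 case, where $\nu(Y_{g+1})=\left(1,\frac{n}{q_1\cdots q_g}\right)$ has positive first coordinate and possibly negative second coordinate, so "lowest valued" and "jump value" must be read in the lexicographic order; one should also note explicitly that this rank jump cannot be followed by a further $z$-blowup, since $\left(1,\ast\right)>\left(0,\ast\right)$ forces every later blowup in $G_{g+1}$ to be an $x$-blowup.
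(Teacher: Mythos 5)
Your ingredient list is right---the incommensurability facts for Types 2, 3 and 4.2 and the Frobenius threshold of Lemma \ref{lowerbeta} are exactly what the paper uses---but the pivotal step of your part (ii) is a genuine gap. You assert that the exponent difference $e'-e$ is ``forced to be representable by $\{n_j\}$ \ldots otherwise $\bar Q_{g'}$ would violate minimality.'' That is the conclusion, not an argument: if $(e'-e)\,\nu(X_{g'})$ were \emph{not} representable by $\{\beta_j\}_{j<g'}$, then $\nu(\bar Q_{g'})=\beta_{g'}+(e'-e)\,\nu(X_{g'})$ would be a genuinely new semigroup value and $\bar Q_{g'}$ would be a necessary further generator---precisely the scenario the lemma must exclude. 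Nothing in your argument rules out small $e'-e$ (note that $(e'-e)\,\nu(X_{g'})=\frac{e'-e}{q_1\cdots q_{g'-1}}\beta_0$ is certainly unrepresentable when $e'-e$ is small), and you flag this yourself as ``the main obstacle.'' There is also a circularity in part (i): you invoke the sieving framework of Section \ref{sectgs} to conclude that no further element is needed, but this lemma is one of the ingredients that framework relies on.

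The paper closes the gap by arguing on the algebraic expression rather than on the exceptional transform. Write a putative further element as $\bar Q_{g'}=\sum_h u_h T_h$ with $T_h$ monomials in $\{Q_j\}_{j=0}^{g'}$; a value jump requires at least two supported terms of common value $\varepsilon$. If every supported $T_h$ contains $Q_{g'}$, factor it out, contradicting minimality. If some supported term contains $Q_{g'}$ and another does not, a common value $\varepsilon$ is impossible because $\nu(Q_{g'})$ is incommensurable with $\{\nu(Q_j)\}_{j<g'}$ (irrationality for Type 2, the consecutive-convergents argument from Lemma \ref{lowerbeta} for Type 3, the rank jump for Type 4.2). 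If no supported term contains $Q_{g'}$, then by Lemma \ref{leveli+} every term transforms at $\Sigma(g',0,1)$ to a unit times the \emph{same} power $X_{g'}^{f}$, so $\nu(\bar Q_{g'})=\frac{f}{q_1\cdots q_{g'-1}}\beta_0$ with $\nu(\bar Q_{g'})>\nu(Q_{g'})>q_{g'-1}\beta_{g'-1}$; this places $f$ beyond the Frobenius threshold established in Lemma \ref{lowerbeta}, so $\nu(\bar Q_{g'})$ is already representable and $\bar Q_{g'}$ is redundant. This three-way case analysis is what your exceptional-transform bookkeeping would need to reproduce, and it is exactly the part your proposal leaves open.
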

\begin{proof}
Let $\nu$ be Type 2. In order to have a jump value, we need two or more terms with the same value, say, $\varepsilon$. Assume $\bar Q_g$ is another minimal generating sequence element after $Q_g$. We have:
 \begin{equation*}
\bar Q_g=\sum_h u_hT_h
\end{equation*}
where $\{T_h\}$ are monomials in $\{Q_j\}_{j=0}^{g}$, and $\{u_h\}\in k$.

Assume $u_h\neq 0$ for at least one $T_h$ which contains a factor of $Q_g$. If all supported terms contain a factor of $Q_g$, then the minimality of $\bar Q_g$ is contradicted. Hence, there exists some supported term that does not contain a $Q_g$ factor. It is easy to see that a common value $\varepsilon$ is impossible here since $\nu(Q_g)$ is not a rational multiple of $\nu(Q_j)$ for $0\leq j \leq g-1$.

Assume $u_h=0$ for all the $\{T_h\}$ which contain a factor of $Q_g$. We only work with $\{T_h\}$ that are monomials in $\{Q_j\}_{j=0}^{g-1}$. Adapting the last part of the proof of Lemma \ref{nonredundant}, we see that $\displaystyle \nu(\bar Q_g) = \frac{f}{q_1\cdots q_{g-1}}$ for some $f\in\mathbb{N}$ and we also have
\begin{equation*}
q_{g-1}\nu(Q_{g-1}) < \nu(Q_g) < \nu(\bar Q_g)
\end{equation*}
This is a contradiction since values of the form $\displaystyle \frac{f}{q_1\cdots q_{g-1}}$ are representable by $\{Q_j\}_{j=0}^{g-1}$ as a consequence of the Frobenius upper bound argument used in the proof of Lemma \ref{lowerbeta}. Thus, $\bar Q_g$ does not exist.

Only slight changes are needed to make the proof for the Type 2 case suitable for the Types 3 and 4.2 cases. For Type 4.2, note that $\nu(Q_{g+1})$ cannot be written as a rational multiple of $\nu(Q_j)$ for $0\leq j \leq g$ because there is a rank jump encoded in $\nu(Q_{g+1})$. For Type 3, the proof of Lemma \ref{lowerbeta} implies $\nu(Q_g)$ cannot be written as $\displaystyle \frac{n}{q_1\cdots q_{g-1}}\beta_0$, which in turn implies that $\nu(Q_g)$ cannot be written in terms of $\nu(Q_j)$ for $0\leq j \leq g-1$. The remaining steps in the proof are completely analogous to what was done in the Type 2 case.
\end{proof}

\begin{thm} \label{minimalgs}
Given a non-divisorial valuation $\nu$, the $\displaystyle \{Q_i\}_{i=0}^{g'}$ form a minimal generating sequence from the perspective of generating the value semigroup $S$.
\end{thm}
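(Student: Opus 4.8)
The plan is to prove the two assertions packed into the statement in turn: first that $\{Q_i\}_{i=0}^{g'}$ generates the value semigroup $S$, and then that it does so minimally. Both halves are a synthesis of the lemmas already in hand; the substance lies in organizing the bookkeeping along the tower $R_0\to R_1\to\cdots\to V$.

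\emph{Generating.} I would show $S=\bigcup_i S_i$, where $S_i:=\langle\nu(Q_0),\ldots,\nu(Q_i)\rangle_{\mathbb N_0}$ --- equivalently $S=S_{g'}$ when $g'<\infty$. The inclusion $S_i\subseteq S$ is immediate, so the work is in $S\subseteq\bigcup_i S_i$. Take $s=\nu(f)\in S$ with $f\in\mathfrak m\cap R$. By Theorem \ref{lu} the tower exhausts $V$, and since the strict transform values $\nu(f^{(j)})$ strictly decrease while they remain positive, $f^{(j)}$ becomes a unit at some finite level; tracing to the first node $\Sigma(i,0,1)$ past that level, the total transform of $f$ there is a monomial times a unit, so $\nu(f)=a\,\nu(X_i)+b\,\nu(Y_i)$. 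It then suffices to prove, by induction on the dual graph piece index $i$, that every value of $S$ captured this way at the node $\Sigma(i,0,1)$ lies in $S_i$ (with the evident analogue when the value group is $\mathbb Z^2$ or $\mathbb Z+\mathbb Z\tau$). For the inductive step I would invoke Lemmas \ref{leveli}, \ref{leveli+} and \ref{strict}, which give the total transforms of $Q_i$ and of the earlier $Q_j$ at $\Sigma(i,0,1)$ as $X_i^{e}Y_i\,u$ and $X_i^{f_j}u$ respectively; Lemma \ref{genvalue} together with Lemma \ref{beta'-1}, which give $\nu(Y_i)=\nu(Q_i)-q_{i-1}\nu(Q_{i-1})$; Lemma \ref{lowerbeta}, which gives $q_{i-1}\nu(Q_{i-1})\in S_{i-2}$; and Lemma \ref{minimalvalue}, which pins $Q_i$ as the smallest-valued element of $R$ producing the $i$-th jump, so that no value is overshot. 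The complementary observation --- that nothing of $S$ is missed strictly between two nodes --- is exactly the remark that $x$- and $y$-blowups enlarge neither the denominator, the rational rank, nor the rank of the values in play, so every value of $S$ does surface at a node. Running through all the pieces --- all of the finitely many in Types $2,3,4$, all of them in Type $1$ --- yields $S=\bigcup_i S_i$; and when $g'<\infty$, Lemmas \ref{lowerbeta} and \ref{nonredundant234} show $S_{g'}$ already contains everything the later $z$-blowups (Type 4.1) or the infinite tail (Types $2,3,4.2$) could add, so $S=S_{g'}$.

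\emph{Minimality.} For $i\ge 2$, Lemma \ref{nonredundant} --- and Lemma \ref{nonredundant234} for the final generator in Types $2$, $3$ and $4.2$ --- says $Q_i$ is the \emph{only} element of the sequence whose value carries a prescribed new denominator (or introduces the rational-rank, rank, or rank-$2$ jump of the last piece); hence $\langle\nu(Q_j):j\ne i\rangle$ misses every value with that denominator or of that rank and cannot equal $S$. And $Q_0=x$, $Q_1=y$ are needed to realize $\beta_0=\nu(x)$, the strictly smallest element of $S$, and $\beta_1=\tfrac{p_1}{q_1}\beta_0$. So no $Q_j$ can be dropped, and the sequence is minimal.

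I expect the crux to be the heart of the generating step: confirming that every value of $S$ genuinely surfaces at a finite level of the tower, and that the descent from a node $\Sigma(i,0,1)$ back down to $S_{i-1}$ --- the part that turns $a\,\nu(X_i)+b\,\nu(Y_i)$ into an honest $\mathbb N_0$-combination of $\nu(Q_0),\ldots,\nu(Q_i)$ --- goes through without snags. This rests entirely on Theorem \ref{lu} together with the value-neutrality of $x$- and $y$-blowups, and in the rank-$2$ cases (Types $3$, $4.2$) and the irrational case (Type $2$) it must be re-run with $\mathbb Z^2$ or $\mathbb Z+\mathbb Z\tau$ in place of a subgroup of $\mathbb Q$, so that the notions of smallest element and of non-negative combination are taken in the lexicographic or the real order.
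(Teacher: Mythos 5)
Your proposal is correct and follows essentially the same route as the paper: the paper's own proof of Theorem \ref{minimalgs} is merely a pointer back to the sieving discussion of Section \ref{sectgs}, and your argument is that discussion made explicit, resting on the same ingredients --- Theorem \ref{lu}, the transform lemmas (Lemmas \ref{strict}, \ref{leveli}, \ref{leveli+}), the value computations (Lemmas \ref{beta'-1}, \ref{genvalue}, \ref{lowerbeta}), and the non-redundancy lemmas (Lemmas \ref{nonredundant} and \ref{nonredundant234}). If anything your write-up is more detailed than the paper's, which leaves the ``every value surfaces at a node and descends to an honest $\mathbb{N}_0$-combination'' step at the same informal level you rightly flag as the crux.
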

\begin{proof}
See the discussion in Section \ref{sectgs}. In the Type 4.1 case, $g'=g$ since we are trying to generate $S$ rather than the value ideals $\{I_s\}$.
\end{proof}

\begin{thm} \label{unique} (Unique representation)
\\
Let $\nu$ be a non-divisorial valuation. Let $s\in S$. Assume $\nu$ is not Type 4.1. We may uniquely write:
\begin{equation*}
\begin{array}{rll}
\displaystyle s=\sum_{i=0}^{g'} \alpha_i \beta_i, & \text{ where } \alpha_0\in\mathbb{N}_0, \alpha_{g'}\in\mathbb{N}_0 \text{ and } 0 \leq \alpha_i \leq q_i-1 \text{ for } 1\leq i < g'
\end{array}
\end{equation*}
If $\nu$ is Type 4.1, then we may uniquely write:
\begin{equation*}
\begin{array}{rll}
\displaystyle s=\sum_{i=0}^{g} \alpha_i \beta_i, & \text{where}\left.\right. \alpha_0\in\mathbb{N}_0, \left.\right.\text{and}\left.\right. 0 \leq \alpha_i \leq q_i-1 \left.\right.\text{for}\left.\right. 1\leq i \leq g
\end{array}
\end{equation*}
\end{thm}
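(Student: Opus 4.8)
The plan is to establish existence and uniqueness of the stated representation separately; in both parts the only substantive input will be Theorem~\ref{minimalgs} (that $\{Q_i\}_{i=0}^{g'}$ generates $S$) and Lemma~\ref{lowerbeta} (the relations $q_i\beta_i=\sum_{j<i}c_j\beta_j$).

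\emph{Existence.} By Theorem~\ref{minimalgs}, $S$ is generated by $\{Q_i\}_{i=0}^{g'}$ (with $g'=g$ in the Type~4.1 case), so any $s\in S$ can be written $s=\sum_i\alpha_i\beta_i$ with $\alpha_i\in\mathbb N_0$, only finitely many nonzero. To push this into normal form I would run a finite reduction. Lemma~\ref{lowerbeta} supplies, for each index $i$ with $1\le i<g'$ (and also $i=g'=g$ in the Type~4.1 case), a relation $q_i\beta_i=\sum_{j<i}c_j^{(i)}\beta_j$ with $c_j^{(i)}\in\mathbb N_0$. Take the largest index $i$ for which $\alpha_i\ge q_i$ and replace $q_i\beta_i$ by $\sum_{j<i}c_j^{(i)}\beta_j$: this lowers $\alpha_i$ by $q_i$ (keeping it in $\mathbb N_0$), raises only coefficients of strictly smaller index, and leaves the support above $i$ untouched. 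Iterating on the same top index drives $\alpha_i$ below $q_i$ in finitely many steps; since afterward the ``largest offending index'' is strictly smaller and the top of the support never grows, the whole procedure terminates. What remains is a representation with $0\le\alpha_i\le q_i-1$ at the bounded indices ($1\le i<g'$, plus $i=g$ in Type~4.1), while $\alpha_0$ and, when it occurs, $\alpha_{g'}$ stay unconstrained in $\mathbb N_0$ --- exactly the asserted form.

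\emph{Uniqueness.} Suppose $\sum_i\alpha_i\beta_i=\sum_i\alpha_i'\beta_i$ are two representations of the required form, and let $k$ be the largest index with $\alpha_k\ne\alpha_k'$; cancelling equal top terms gives
\begin{equation*}
(\alpha_k-\alpha_k')\beta_k=\sum_{j=0}^{k-1}(\alpha_j'-\alpha_j)\beta_j .
\end{equation*}
If $k$ is the unbounded last index ($k=g'$ in Types 2, 3, 4.2), I would note that the right-hand side lies in the $\mathbb Q$-span of $\beta_0$, since $\beta_0,\dots,\beta_{g'-1}$ are all rational multiples of $\beta_0$, whereas $\beta_{g'}$ is not: for Type~2 because $\beta_g$ carries the irrational $\tilde\beta_g$ while the right side is rational; for Type~4.2 because $\beta_{g+1}$ has nonzero first coordinate while $\beta_0,\dots,\beta_g$ do not; and for Type~3 by the computation already made in the proof of Lemma~\ref{lowerbeta}, since $\nu(Y_g)$ proportional to $\beta_0$ would force two consecutive convergents of $[a_1^{(g)},\dots,a_{m-1}^{(g)}]$ to coincide. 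Hence $\alpha_k=\alpha_k'$, contradicting the choice of $k$; so $k$ must be a bounded index (in particular $k\ge 1$, as $k=0$ gives $(\alpha_0-\alpha_0')\beta_0=0$ outright).

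\emph{The bounded case and the main obstacle.} For a bounded $k$ one has $|\alpha_k-\alpha_k'|\le q_k-1<q_k$, and I would finish by a denominator count. Writing each $\beta_j$ with $0\le j\le k$ over the common denominator $q_1\cdots q_k$ as in the proof of Lemma~\ref{lowerbeta}, namely $\beta_j=\tfrac{n_j}{q_1\cdots q_k}\beta_0$ with $n_j=\tau_j\prod_{h=j+1}^{k}q_h$, one sees that $q_k\mid n_j$ for every $j<k$, while $n_k=\tau_k\equiv p_k\pmod{q_k}$ and $\gcd(p_k,q_k)=1$, so $\gcd(n_k,q_k)=1$. Clearing the common $\beta_0$ (respectively, in Types 3 and 4.2, the common direction vector, since the $\beta_j$ with $j<g'$ are all its scalar multiples) turns the displayed identity into the integer equation $(\alpha_k-\alpha_k')n_k=\sum_{j<k}(\alpha_j'-\alpha_j)n_j$, whose right side is divisible by $q_k$; thus $q_k\mid(\alpha_k-\alpha_k')n_k$, and coprimality forces $q_k\mid(\alpha_k-\alpha_k')$, hence $\alpha_k=\alpha_k'$ --- a contradiction. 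So no differing index exists and the representation is unique. The most delicate point is precisely this denominator bookkeeping: the coprimality claim $\gcd(n_k,q_k)=1$ (which rests on the $\tau_j$-recursion from the proof of Lemma~\ref{lowerbeta}) and the clean passage from the vector-valued $\beta_0$ of the Type~3 and Type~4.2 cases back to the scalar situation. Everything else is a direct application of Theorem~\ref{minimalgs} and Lemma~\ref{lowerbeta}.
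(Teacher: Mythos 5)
Your proposal is correct, and the existence half is essentially the paper's argument: write $s=\sum\alpha_i\beta_i$ using the generating sequence and then descend through the indices, trading $q_i\beta_i$ for a lower-index combination via Lemma~\ref{lowerbeta} and the division algorithm, with the top index handled by the Type~2/3/4.2 independence statements (and by Lemma~\ref{lowerbeta} again in Type~4.1). Where you go beyond the paper is uniqueness: the paper's proof essentially stops at existence of the normal form plus the remark that $\beta_{g'}$ satisfies no dependence relation, leaving the uniqueness of the bounded coefficients implicit, whereas you make it explicit with the denominator count. That count is sound: writing $\beta_j=\tfrac{n_j}{q_1\cdots q_k}\beta_0$ with $n_j=\tau_j\prod_{h=j+1}^{k}q_h$, one indeed has $q_k\mid n_j$ for $j<k$, while the recursion $\tau_k=q_kq_{k-1}\tau_{k-1}+p_k-q_k$ gives $n_k=\tau_k\equiv p_k\pmod{q_k}$ and hence $\gcd(n_k,q_k)=1$, so $q_k\mid(\alpha_k-\alpha_k')$ forces $\alpha_k=\alpha_k'$ given the bound $|\alpha_k-\alpha_k'|<q_k$. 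Your treatment of a top differing index $k=g'$ also correctly extends the paper's Type~3 convergent argument to integer (possibly negative) coefficients, since the identity $\lambda_{m-1}/\mu_{m-1}=(e+f)/e=\lambda_{m-2}/\mu_{m-2}$ is contradictory for any nonzero integer $e$. In short: same route for existence, and a welcome completion of the uniqueness step that the paper only gestures at.
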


\begin{proof}
Generating sequences allow us to write $s=\sum \alpha_i \beta_i$ with $\alpha_i\in\mathbb{N}_0$. If $g'<\infty$, start from the penultimate index $g'-1$ down and repeatedly use Lemma \ref{lowerbeta} plus the division algorithm to establish the bounds on $\alpha_i$ for $i<g'$. That is, rewrite multiples of $q_i\beta_i$ in terms of $\displaystyle \{\beta_j\}_{j=0}^{i-1}$ then descend in $i$ and repeat the process at each lower step. If $g'=\infty$ (Type 1), any $s$ can be represented with finitely many $\{\beta_i\}$, hence the bounds on $\alpha_i$ can be established by the aforementioned process. Notice that $q_1\beta_1=p_1\beta_0$, so $\alpha_0$ can handle all the ``slack.''

Now we take care of the highest index $g'$, noting that Type 1 valuations have no highest index to worry about, so Type 1 is already done. By Lemma \ref{genvalue}, there is no linear dependence relation possible between $\beta_{g'}$ and $\{\beta_i\}_{i=0}^{g'-1}$ for valuations of Types 2 and 4.2. For Type 3 valuations, the proof of Lemma \ref{lowerbeta} shows that $e\beta_g$ cannot be represented by $\{\beta_i\}_{i=0}^{g-1}$, where $e\in\mathbb{N}$. Hence $\alpha_{g'}\in\mathbb{N}_0$ in these three cases.

For Type 4.1 valuations, notice $q_g\beta_g$ can be written in terms of $\displaystyle \{\beta_i\}_{i=0}^{g-1}$ using Lemma \ref{lowerbeta}, hence we get the bounds on $\alpha_{g'}$ by the earlier division algorithm argument.
\end{proof}

\begin{rem}
An alternative proof of this theorem for the non-discrete Type 1 case is given in \cite{ghk}.
\end{rem}

\section{concluding remarks} \label{sectcr}

This paper was originally intended to be the first of two papers stemming from the author's dissertation \cite{liphd} on the dual graphs and Poincar\'e series of valuations on function fields of dimension two. While this paper can stand on its own, this paper also logically sets up the second paper \cite{lisch} which uses dual graphs and generating sequences to analyze the Poincar\'e series of non-divisorial valuations.

It is natural to wonder why this paper focuses on the non-divisorial cases. The answer is two-fold. First, Spivakovsky's proofs gave greater details on the treatment of divisorial valuations, which left some room for exposition on the non-divisorial valuations. It is hoped that this paper will aid the readers who are interested in the dual graphs of valuations. Second, one of the goals of the author's dissertation was to classify valuations via their Poincar\'e series and the non-divisorial cases were the ones that needed attention for that purpose. However, much of the arguments in this paper can be easily adapted to the divisorial case.

Finally, it bears repeating that generating sequences are thought of as generating the value semigroup in this paper. See Section \ref{sectgs}. Readers should keep this in mind if they wish to work with generating sequences. In particular, this change in the definitions makes a significant difference in the case of Type 4.1 valuations.

\end{document}